\newtheorem{prop}{Proposition}[section]
\newtheorem{lem}[prop]{Lemma}
\newtheorem{theo}[prop]{Theorem}
\newtheorem{cor}[prop]{Corollary}
\newtheorem{alg}[prop]{Algorithm}
\newtheorem{example}[prop]{Example}
\numberwithin{equation}{section}
\newcommand{\beq}{\begin{eqnarray}}
\newcommand{\beqq}{\begin{eqnarray*}}
\newcommand{\eeq}{\end{eqnarray}}
\newcommand{\eeqq}{\end{eqnarray*}}
\newcommand{\eps}{\varepsilon}
\newcommand{\kp}[1]{\kappa_{[#1]}}
\newcommand{\kpt}[1]{\tilde{\kappa}_{[#1]}}
\newcommand{\R}{\mathbb{R}}
\newcommand{\N}{\mathbb{N}}
\newcommand{\1}{\mathbbm{1}}
\newcommand{\bU}{{\texttt U}}
\newcommand{\me}{{\rm e}}
\newcommand{\eg}{e.g.}
\newcommand{\cf}{c.f.}
    \def\d{{\textnormal d}}
\newenvironment{eqnarr}{\begin{IEEEeqnarray}{rCl}}{\end{IEEEeqnarray}\ignorespacesafterend}
\DeclareMathOperator{\sgn}{sgn}
\newcommand*{\norm}[1]{\lVert #1 \rVert}
    \def\beq{\begin{eqnarr}}
    \def\eeq{\end{eqnarr}}
    \def\beqq{\begin{eqnarray*}} 
    \def\eeqq{\end{eqnarray*}} 
        \def\d{{\rm d}}
    \def\d{{\textnormal d}}
\newtheorem{remark}{Remark}[section]
\renewcommand{\ln}{\log}
\renewcommand{\ln}{\log}
\newcommand{\un}{\mathbf{1}}
  \newcommand{\D}{{\rm d}}
\definecolor{amethyst}{rgb}{0.6, 0.4, 0.8}
\definecolor{applegreen}{rgb}{0.55, 0.71, 0.0}
\definecolor{aqua}{rgb}{0.0, 1.0, 1.0}
\definecolor{asparagus}{rgb}{0.53, 0.66, 0.42}
\definecolor{amber(sae/ece)}{rgb}{1.0, 0.49, 0.0}
 	\definecolor{armygreen}{rgb}{0.29, 0.33, 0.13}
	\definecolor{shitbrown}{rgb}{0.43, 0.21, 0.1}
	\definecolor{brightpink}{rgb}{1.0, 0.0, 0.5}
	\definecolor{brightube}{rgb}{0.82, 0.62, 0.91}
	 	\definecolor{byzantine}{rgb}{0.74, 0.2, 0.64}
		\definecolor{chartreuse(web)}{rgb}{0.5, 1.0, 0.0}
\newcommand{\bT}{{\texttt T}}
\newcommand{\U}{\texttt U}
\newcommand{\eU}{\emph{\texttt U}}
\newcommand{\bS}{{\texttt S}}
\newcommand{\ebS}{{\emph{\texttt S}}}
\newcommand{\bF}{{\texttt F}}
\newcommand{\ebF}{{\emph{\texttt F}}}
\newcommand{\bL}{{\texttt L}}
\newcommand{\ebL}{{\emph{\texttt L}}}
\newcommand{\bA}{{\texttt A}}
\newcommand{\bJ}{\texttt J}
\newcommand{\ebJ}{\emph{\texttt J}}
\numberwithin{equation}{section}
\theoremstyle{plain}
\begin{document}

\begin{frontmatter}
\title{Monte Carlo Methods for the Neutron \\ Transport Equation}


\begin{aug}
\author{\fnms{Alexander M.G. Cox}\thanksref{t3}\ead[label=e1]{a.m.g.cox@bath.ac.uk}},
\author{\fnms{Simon C. Harris}\thanksref{t3}\ead[label=e2]{simon.harris@auckland.ac.nz}},\\
\author{\fnms{Andreas E. Kyprianou}\thanksref{t3}\ead[label=e3]{a.kyprianou@bath.ac.uk}}
\and 
\author{\fnms{Minmin Wang}\thanksref{t3}\ead[label=e4]{wangminmin03@gmail.com}},

\thankstext{t3}{Supported by EPSRC grant EP/P009220/1.}



\address{
A. M. G. Cox,  and \\
A.E. Kyprianou \\
University of Bath\\
Department of Mathematical Sciences \\
Bath, BA2 7AY\\
 UK.\\
\printead{e1}\\
\printead{e3}\\
}
\address{
M. Wang\\
School of Mathematical\\
 and Physical Sciences\\
University of Sussex\\
Sussex House, Falmer\\
Brighton, BN1 9RH\\
UK.\\
\printead{e4}}

\address{S. C. Harris\\
Department of Statistics\\
University of Auckland\\
Private Bag 92019\\
Auckland 1142\\
New Zealand\\
\printead{e2}
}
\end{aug}

\begin{abstract}\hspace{0.1cm}
  This paper continues our treatment of the Neutron Transport Equation (NTE) building on the work in \cite{MultiNTE}, \cite{SNTE} and \cite{SNTE-II}, which describes the flux of neutrons through inhomogeneous fissile medium. Our aim is to analyse existing and novel Monte Carlo (MC) algorithms, aimed at simulating the lead eigenvalue associated with the underlying model. This quantity is of principal importance in the nuclear regulatory industry for which the NTE must be solved on complicated inhomogenous domains corresponding to nuclear reactor cores, irradiative hospital equipment, food irradiation equipment and so on.  We include a complexity analysis of such MC algorithms, noting that no such undertaking has previously appeared in the literature. The new MC algorithms offer a variety of advantages and disadvantages of accuracy vs cost, as well as the possibility of more convenient computational parallelisation.
\end{abstract}

\begin{keyword}[class=MSC]
\kwd[Primary ]{82D75, 60J80, 60J75}
\kwd{}
\kwd[; secondary ]{60J99}
\end{keyword}

\begin{keyword}
\kwd{Neutron Transport Equation, principal eigenvalue, semigroup theory, Perron-Frobenius decomposition, Monte Carlo simulation, complexity, Doob h-transform, twisted Monte Carlo}
\end{keyword}

\end{frontmatter}

\setcounter{tocdepth}{1}

\section{Introduction:  The Neutron Transport Equation} In this paper, we continue the stochastic analysis of the neutron transport equation (NTE), cf. \cite{SNTE, SNTE-II, MultiNTE}, but now focusing mainly on Monte Carlo methods. We begin in  this first section with a review of the classical and recent contributions to the mathematical framework of the NTE, out of which we will build completely new Monte Carlo algorithms in the subsequent sections. 

\smallskip

We recall that the NTE describes the flux of neutrons across a planar cross-section in an inhomogeneous fissile medium (typically measured in number of neutrons per cm$^2$ per second) where nuclear fission is actively creating additional neutrons.  Neutron flux is described as a function of time, $t$, Euclidian location, $r$, neutron velocity, $\upsilon$ and neutron energy $E$. It is not uncommon in the physics literature, as indeed we shall do here, to assume that energy is a function of velocity, thereby reducing the number of variables by one. This allows us to describe the dependency of flux more simply in terms of time and, what we call, the {\it configuration variables} $ (r, v) \in D \times V$ where $D\subseteq\mathbb{R}^3$ is a non-empty, open, smooth, bounded and convex domain, and $V$ is the velocity space, which we will assume 
to be $V = \{\upsilon\in \mathbb{R}^3: \upsilon_{\texttt{min}}\leq |\upsilon|\leq \upsilon_{\texttt{max}}\}$, where $0<\upsilon_{\texttt{min}}<\upsilon_{\texttt{max}}<\infty$.  \smallskip

Whilst the NTE is usually stated in the applied mathematics and physics literature in its forwards form, we will introduce it here in its backwards form. A fuller description of the relationship between the two is given in the accompanying papers \cite{MultiNTE} and \cite{SNTE}.  From a mathematical perspective, the backwards NTE is given\footnote{The differential operator $\nabla\psi_t(r, \upsilon)$ is the gradient in the spatial variable, $r$, only, so that $\upsilon\cdot\nabla\psi_t(r, \upsilon)$ is the gradient of $\psi$ in the direction of $\upsilon$, multiplied by the norm of the velocity.} by
\begin{align}
\frac{\partial}{\partial t}\psi_t(r, \upsilon) &= \upsilon\cdot\nabla\psi_t(r, \upsilon)  -\sigma(r, \upsilon)\psi_t(r, \upsilon)\notag\\
&+ \sigma_{\texttt{s}}(r, \upsilon)\int_{V}\psi_t(r, \upsilon') \pi_{\texttt{s}}(r, \upsilon, \upsilon')\d\upsilon' + \sigma_{\texttt{f}}(r, \upsilon) \int_{V}\psi_t(r, \upsilon') \pi_{\texttt{f}}(r, \upsilon, \upsilon')\d\upsilon',
\label{bNTE}
\end{align}
where the different components (or {\it cross-sections}\footnote{There is a discrepancy between what we mean by cross-sections here  and what is typical in the physics and engineering literature. Here, cross-sections $\sigma_\mathtt{s}, \sigma_\mathtt{f}$ are considered as rates per unit time, which is natural for probabilistic considerations, whereas the norm  is to consider them as rates per unit neutron track length, which is natural when thinking of modelling how physical interaction occurs. The difference is a factor of $\upsilon$, which converts rate per unit track length to rate per unit time. This also explains why \eqref{bNTE} is usually written with a factor $1/\upsilon$ preceding the time derivative.} as they are known in the nuclear physics literature) have the following interpretation:
\begin{align*}
\sigma_{\texttt{s}}(r, \upsilon) &: \text{ the rate at which scattering occurs from incoming velocity $\upsilon$,}\\
\sigma_{\texttt{f}}(r, \upsilon) &: \text{  the rate at which fission occurs from incoming velocity $\upsilon$,}\\
\sigma(r, \upsilon) &: \text{ the sum of the rates } \sigma_{\texttt{f}}+ \sigma_{\texttt{s}} \text{ and is known as the total cross section,}\\
\pi_{\texttt{s}}(r, \upsilon, \upsilon')\d\upsilon' &: \text{  the scattering yield at velocity $\upsilon'$ from incoming velocity }  \upsilon, \\
 &\hspace{0.5cm}\text{ satisfying }\textstyle{\int_V}\pi_{\texttt{s}}(r, \upsilon, \upsilon'){\rm d}\upsilon'=1,\text{ and }\\
 \pi_{\texttt{f}}(r, \upsilon, \upsilon')\d\upsilon' &:  \text{  the neutron yield at velocity $\upsilon'$ from fission with incoming velocity }   \upsilon,\\
 &\hspace{0.5cm}\text{ satisfying }{\color{black} m_{\texttt{f}}(r,\upsilon) := \textstyle{\int_V}\pi_{\texttt{f}}(r, \upsilon, \upsilon')\d\upsilon' <\infty.}
\end{align*}
Note that scattering is the physical process that occurs when a neutron comes into close proximity with an atomic nucleus causing a change in velocity.  A justification of the structure of the backwards (and indeed the forwards) NTE  form from the physics of nuclear scattering and fission is given in a number of classical texts e.g. \cite{DL6}  and \cite{M-K}, see also \cite{MultiNTE} for a multi-species treatment. 
 
 \smallskip
 
 It is also usual to assume the additional boundary conditions 
\begin{equation}
\left\{
\begin{array}{ll}
\psi_0(r, \upsilon) = g(r, \upsilon) &\text{ for }r\in D, \upsilon\in{V},
\\
&
\\
\psi_t(r, \upsilon) = 0& \text{ for }r\in \partial D
\text{ if }\upsilon
\cdot{\bf n}_r>0.
\end{array}
\right.
\label{BC}
\end{equation}
where  ${\bf n}_r$ is the outward facing normal of $D$ at $r\in \partial D$ and $g: D\times {V}\to [0,\infty)$ is a bounded, measurable function which we will later assume has some additional properties. Roughly speaking, this means that neutrons at the boundary which are travelling in the direction of the exterior of the domain are lost to the system.
The second of the two conditions in \eqref{BC} is often written $\psi_t|_{{\partial D}^+} = 0$, where ${\partial D}^+ : = \{(r,\upsilon)\in D\times V :r\in \partial D
\text{ if }\upsilon
\cdot{\bf n}_r>0 \}$.

\smallskip
 
For mathematical reasons, in the forthcoming analysis, we will assume that 

\smallskip

{\bf (H1): Cross-sections $\sigma_{\texttt{s}}$, $\sigma_{\texttt{f}}$, $\pi_{\texttt{s}}$ and $\pi_{\texttt{f}} $ are uniformly bounded away from   infinity.}

\smallskip

{\bf (H2): The mixed cross-sections satisfy}
\[
\inf_{r\in D, \upsilon, \upsilon'\in V} \sigma_{\texttt{s}}(r,\upsilon)\pi_{\texttt{s}}(r, \upsilon, \upsilon') + \sigma_{\texttt{f}}(r,\upsilon) \pi_{\texttt{f}}(r, \upsilon, \upsilon')>0.
\]

\smallskip

{\bf  (H3): There exists an open ball $B$, compactly embedded in $D$, such that}
$$
\inf_{r \in B, \upsilon, \upsilon' \in V}\sigma_{\texttt{f}}(r, \upsilon)\pi_{\texttt{f}}(r, \upsilon, \upsilon') > 0.
$$
Roughly speaking, (H2) says that either scattering or fission, or both, must be able to occur everywhere at all energies. On the other hand, (H3) says that fission must be able to occur at least in some region at all energies.
\smallskip

In addition, the maximum number of neutrons that can be emitted during a fission event with positive probability is capped by physical constraints. For example in an environment where the heaviest nucleus is {\it Uranium-235}, there are at most 143 neutrons that can be released in a fission event, albeit, in reality it is more likely that 2 or 3 are released in any fission event. We will thus occasionally work with the assumption:
 
\smallskip

 {\bf (H4): Fission offspring are bounded in number  by the constant $n_{\texttt{max}}> 1$.}
 
 \smallskip

In particular this means that 
$
\textstyle{\sup_{ r\in D, \upsilon\in V}m_{\texttt{f}}(r,\upsilon)\leq n_\texttt{max}}.
$

\smallskip

It turns out that equation \eqref{bNTE} is ill-defined unless considered in an appropriate sense. There are in fact two ways this can be done. The first approach is to formulate the problem as an abstract Cauchy problem 
(ACP) in an appropriate Banach space; see e.g. \cite{D, DL6}, which is typically the approach taken in the applied mathematics literature. The second approach is to consider the mild form of the equation. This approach has seen somewhat less attention, although it  lends itself better to Monte Carlo methods.

\section{Mild NTE and stochastic representation} In order for us to give a clear statement of what a mild version of \eqref{bNTE} should look like and how it relates to Monte Carlo simulation, we must first consider an auxiliary stochastic process.

\subsection{The physical process}
Consider a neutron branching process (NBP), which at time $t\geq0$ is represented by a configuration of particles which are specified via their physical location and velocity in $D\times V$, say $\{(r_i(t), \upsilon_i(t)): i = 1,\dots , N_t\}$, where $N_t$ is the number of particles alive at time $t\ge 0$.
In order to describe the process, we will represent it as a process in the space of the random counting measures
\[
X_t(A) = \sum_{i=1}^{N_t}\delta_{(r_i(t), \upsilon_i(t))}(A), \qquad A\in\mathcal{B}(D\times V), \;t\ge 0,
\]
where $\delta_{(r, v)}$ is the Dirac measure of unit mass at $(r,v)$ defined on $\mathcal{B}(D\times V)$, the Borel subsets of $D\times V$.
The evolution of $(X_t, t\geq 0)$ is a stochastic process valued in the space of atomic measures
$
\mathcal{M}(D\times V): = \{\textstyle{\sum_{i = 1}^n}\delta_{(r_i,\upsilon_i)}: n\in \mathbb{N}, (r_i,\upsilon_i)\in D\times V, i = 1,\cdots, n\}
$
which evolves randomly as follows.

\smallskip

A particle positioned at $r$ with velocity $\upsilon$ will continue to move along the trajectory $r + \upsilon t$ for $t\geq 0$, until one of the following things happens. 
\begin{enumerate}
\item[(i)] The particle leaves the physical domain $D$, in which case it is instantaneously killed. 
\smallskip

\item[(ii)] Independently of all other neutrons, a scattering event occurs when a neutron comes in close proximity to an atomic nucleus and, accordingly, makes an instantaneous change of velocity. For a neutron in the system with position and velocity $(r,\upsilon)$, if we write $T_{\texttt{s}}$ for the random time that scattering may occur, then independently of any other physical event that may affect the neutron, 
$
\Pr(T_{\texttt{s}}>t) = \exp\{-\textstyle{\int_0^t} \sigma_{\texttt{s}}(r+\upsilon s, \upsilon){\rm d}s \}, $ for $t\geq0$ with $(r+vs : s\in[0,t])\subset D$, that is, as long as the particle has remained within the domain up to time $t$.
\smallskip

When scattering occurs at space-velocity $(r,\upsilon)$, the new velocity $\upsilon'$ is selected in $V$ independently with probability $\pi_{\texttt{s}}(r, \upsilon, \upsilon')\d\upsilon'$. 
\smallskip

\item[(iii)] Independently of all other neutrons, a fission event occurs when a neutron smashes into an atomic nucleus. 
For a neutron in the system  with initial position and velocity $(r,\upsilon)$, if we write $T_{\texttt{f}}$ for the random time that scattering may occur, then independently of any other physical event that may affect the neutron, 
$
\Pr(T_{\texttt{f}}>t) = \exp\{-\textstyle{\int_0^t} \sigma_{\texttt{f}}(r+\upsilon s, \upsilon){\rm d}s \},$ for $t\geq 0
$  with $(r+vs : s\in[0,t])\subset D$.
\smallskip

When fission occurs,  the smashing of the atomic nucleus produces  lower mass isotopes and releases  a random number of neutrons, say $N\geq 0$, which are ejected from the point of impact with randomly distributed, and possibly correlated, velocities, say $\{\upsilon_i: i=1,\cdots, N\}$. The outgoing velocities are described by  the atomic random measure 
\begin{equation}
\label{PP}
\mathcal{Z}(A): = \sum_{i= 1}^{N } \delta_{\upsilon_i}(A), \qquad A\in\mathcal{B}(V).
\end{equation} 

When fission occurs at location $r\in\mathbb{R}^d$ from a particle with incoming velocity $\upsilon\in{V}$, we denote by ${\mathcal P}_{(r,\upsilon)}$ the law of $\mathcal{Z}$.  The probability laws ${\mathcal P}_{(r,\upsilon)}$, with their corresponding expectations ${\mathcal E}_{(r,\upsilon)}$, are such that, for $\upsilon'\in{V}$ and bounded measurable $g: V\to[0,\infty)$,
\begin{align}
\int_V g(\upsilon')\pi_{\texttt{f}}(r, v, \upsilon')\d\upsilon' &= {\mathcal E}_{(r,\upsilon)}\left[\int_V g(\upsilon')\mathcal{Z}(\d \upsilon')\right]
=: {\mathcal E}_{(r,\upsilon)}[\langle g, \mathcal{Z}\rangle].
\label{Erv}
\end{align}
Note the possibility that $\Pr(N = 0)>0$, which will be tantamount to neutron capture (that is, where a neutron slams into a nucleus but no fission results and the neutron is absorbed into the nucleus). 
\end{enumerate}
\smallskip

In essence, one may think of the process $X: = (X_t, t\geq 0)$ as a typed spatial Markov branching process, where  the type of each particle is the velocity  $\upsilon\in{V}$  and the underlying 
motion is nothing more than movement in a straight line with velocity $\upsilon$ 
until the next jump in type (scattering) or branching event (fission), with these occurring independently for each particle at rates depending only on their current spatial position and type.
\begin{remark}\label{nonuniqueNBP}\rm
  The NBP is thus parameterised by the quantities $\sigma_{\texttt s}, \pi_{\texttt s}, \sigma_{\texttt f}$ and the family of measures ${\mathcal P} =({\mathcal P}_{(r,\upsilon)} , r\in D,\upsilon\in V)$ and accordingly we refer to it as a $(\sigma_{\texttt s}, \pi_{\texttt s}, \sigma_{\texttt f}, \mathcal{P})$-NBP. It is associated to the NTE via the relation \eqref{Erv}, however this association does not uniquely identify the NBP. Said another way, the quantities $\sigma_{\texttt s}, \pi_{\texttt s}, \sigma_{\texttt f}, \pi_{\texttt f}$ in the NTE do not uniquely identify an underlying NBP. In particular, only the {average} number of particles created at fission events is specified via $\pi_{\texttt f}$, but not their probability distributions as required for ${\mathcal P}$. See Remark 2.1 of \cite{SNTE}.

\smallskip

Later on, we will see that it is possible to simulate a NBP with only the parameters $\sigma_{\texttt s}, \pi_{\texttt s}, \sigma_{\texttt f}, \pi_{\texttt f}$ given. Such processes are not uniquely determined, however the aforementioned cross-sections are sufficient to identify at least one NBP which is associated to the NTE with those cross-sections. In that case, we will abuse our terminology and refer to it as a $(\sigma_{\texttt s}, \pi_{\texttt s}, \sigma_{\texttt f}, \pi_{\texttt f})$-NBP.
\hfill$\diamond$\end{remark}

Write $\mathbb{P}_{\delta_{(r, \upsilon)}}$ for the the law of $X$ when issued from a single particle  with space-velocity configuration $(r,\upsilon)\in {D}\times V$.
More generally, for $\mu\in\mathcal{M}(D\times V)$, 
we understand 
$
\mathbb{P}_{\mu} : = \mathbb{P}_{\delta_{(r_1,\upsilon_1)}}\otimes\cdots\otimes\mathbb{P}_{\delta_{(r_n,\upsilon_n)}}$
 when $\mu = \textstyle{\sum_{i = 1}^n} \delta_{(r_i,\upsilon_i)}.
$
In other words, the process $X$ when issued from initial configuration $\mu$ is equivalent to issuing $n$ independent copies of $X$, where the $i^{\textrm{th}}$ copy starts from a single particle with configuration $(r_i, \upsilon_i)$, for $i = 1,\cdots, n$.
\smallskip

Like all spatial Markov branching processes, $(X, \mathbb{P})$, where $\mathbb{P} : = (\mathbb{P}_{\mu}, \mu\in\mathcal{M}(D\times V))$, respects the {\it Markov branching property} with respect to the filtration 
$\mathcal{F}_t: = \sigma(  (r_i(s),\upsilon_i(s)) : i = 1,\cdots, N_s, s\leq t)$, $t\geq 0$.  That is to say, for all bounded and measurable $g: {D}\times V\to [0,\infty)$ and $\mu\in\mathcal{M}(D\times V)$ written $\mu = \textstyle{\sum_{i = 1}^n\delta_{(r_i,\upsilon_i)}}
$, for $s,t\geq0$, on the event $\{X_t = \mu\}\in\mathcal{F}_t$,
we have
$
\textstyle{\mathbb{E}_{\mu}[\prod_{i=1}^{N_{t+s}}g(r_i(t+s), \upsilon_i(t+s))|\mathcal{F}_t] = \prod_{i=1}^{n}
u_s[g](r_i, \upsilon_i)},$ for  $t\geq 0, r_i\in D, \upsilon_i\in{V},
$
where $\textstyle{u_s[g](r,\upsilon): = \mathbb{E}_{\delta_{(r, \upsilon)}}[\prod_{i=1}^{N_{s}}g(r_i(s), \upsilon_i(s))]}.
$ 

\smallskip

What is of particular interest to us in the context of the NTE is what we call the  {\it expectation semigroup} of the neutron branching process. More precisely, and with pre-emptive notation, we are interested in 
\begin{equation}
\psi_t[g](r,\upsilon) : = \mathbb{E}_{\delta_{(r, \upsilon)}}[\langle g, X_t \rangle], \qquad t\geq 0, r\in \bar{D}, \upsilon\in{V},
\label{semigroup}
\end{equation}
for $g\in L^+_\infty(D\times V)$, the space of non-negative uniformly bounded measurable functions on $D\times V$.  Here we have made a slight abuse of notation (see $\langle \cdot,\cdot\rangle$ as it appears in \eqref{Erv}) and written $\langle g, X_t \rangle$ to mean $\textstyle{\int_{D\times V}} g(r,\upsilon)X_t(\d r,\d \upsilon)$, that is, $\langle g, X_t \rangle = \sum_{i=1}^{N_t} g(r_i(t), v_i(t) )$.  \smallskip

Thus, $\psi_t[g](r,\upsilon)$ gives the average value of a weighted sum over all the neutrons present at time $t$ when started from a single neutron with a configuration $(r,\upsilon)$, where the weight of a neutron with configuration $(r',\upsilon')$ is given by $g(r',\upsilon')$.
\smallskip

Heuristically, if we take $g(r',\upsilon')=\delta_{(r', \upsilon')}$, then $\psi_t$ recovers the expected density of neutrons that have a position-velocity configuration $(r', \upsilon')$ at time $t$.  In particular, the expected flux of neutrons across any planar cross-section could then be recovered from these quantities.

\smallskip

To see why $(\psi_t, t\geq 0)$ deserves the name of expectation semigroup, it is a straightforward exercise with the help of the  Markov branching property to show that 
\begin{equation}
\psi_{t+s}[g](r,\upsilon) =\psi_t[\psi_s[g]](r,\upsilon)\qquad s,t\geq 0.
\label{branchingsemigroup}
\end{equation}

\subsection{Mild form of the Neutron Transport Equation}
The reason why the choice of notation \eqref{semigroup} is pre-emptive is because we can now show how it relates to the aforementioned mild form of the NTE.
 In order to do so, let us momentarily introduce some more notation:
the deterministic evolution
$
\texttt{U}_t[g]( r,\upsilon) = g( r+\upsilon t, \upsilon)\mathbf{1}_{\{t<\kappa^D_{r,\upsilon}\}}, t\geq 0,
$
where
$
\kappa_{r,\upsilon}^{D} := \inf\{t>0 : r+\upsilon t\not\in D\},
$
represents the advection semigroup associated with a single neutron travelling at velocity $\upsilon$ from $r$, killed on exit from $D$.

\begin{lem}[\cite{MultiNTE}]\label{NBPrep}Under (H1) and (H2), for $g\in L^+_\infty(D\times V)$, there exist  constants $C_1,C_2>0$ such that  $\psi_t[g]$, as given in \eqref{semigroup}, is uniformly bounded by $ C_1\exp(C_2 t)$, for all $t\geq 0$. Moreover, $(\psi_t[g], t\geq 0)$ is the unique solution, which is bounded in time, to  the so-called mild equation (also called a {\it Duhamel solution} in the PDE literature):
\begin{equation}
\psi_t[g] = \emph{\texttt{U}}_t[g] + \int_0^t \emph{\texttt{U}}_s[({\ebS} + {\ebF})\psi_{t-s}[g]]\d s, \qquad t\geq 0,
\label{mild}
\end{equation}
for which \eqref{BC} holds.
\end{lem}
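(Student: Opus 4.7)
The plan is to handle the three assertions in turn: the exponential bound first, then the mild equation via a first-event decomposition, and finally uniqueness via a Gronwall argument.

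For the uniform bound, I would work with $M_t(r,\upsilon):=\mathbb{E}_{\delta_{(r,\upsilon)}}[N_t]$. Under (H1) the total rate $\sigma=\sigma_{\texttt{s}}+\sigma_{\texttt{f}}$ is bounded by some $\bar\sigma<\infty$, and since $\pi_{\texttt{f}}$ is uniformly bounded on the bounded velocity set $V$, the mean fission yield $m_{\texttt{f}}(r,\upsilon)=\int_V\pi_{\texttt{f}}(r,\upsilon,\upsilon')\,d\upsilon'$ is uniformly bounded by some $\bar m$. A direct domination argument, comparing with a continuous-time Markov branching process whose every event produces at most $\bar m\vee 1$ offspring at rate $\bar\sigma$, then yields $M_t\leq e^{C_2 t}$ for a suitable $C_2$. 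Since $\langle g,X_t\rangle\leq\|g\|_\infty N_t$, this gives $|\psi_t[g](r,\upsilon)|\leq\|g\|_\infty e^{C_2 t}$, so one may take $C_1=\|g\|_\infty$.

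For the mild equation, I would carry out a first-event decomposition on the initial particle. Let $T=T_{\texttt{s}}\wedge T_{\texttt{f}}\wedge\kappa^D_{r,\upsilon}$ be the first time at which that particle either scatters, undergoes a fission, or exits $D$. On $\{T>t\}$ the configuration at time $t$ is still the single particle at $(r+\upsilon t,\upsilon)$, contributing $e^{-\int_0^t\sigma(r+\upsilon s,\upsilon)\,ds}\,\texttt{U}_t[g](r,\upsilon)$ to $\psi_t[g](r,\upsilon)$. On $\{T\leq t\}$, conditioning on $T$ and the type of event and invoking the strong Markov and Markov branching properties together with \eqref{Erv} produces, for the scattering contribution, the integral of $\psi_{t-T}[g](r+\upsilon T,\upsilon')$ against $\pi_{\texttt{s}}(r+\upsilon T,\upsilon,\cdot)$, and analogously for fission against $\pi_{\texttt{f}}$. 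Assembling these terms yields a Feynman--Kac-type integral equation with explicit killing at rate $\sigma$ along the advection trajectory. A classical semigroup rearrangement, using the identity $1-e^{-\int_0^t\sigma(r+\upsilon u,\upsilon)\,du}=\int_0^t\sigma(r+\upsilon s,\upsilon)e^{-\int_0^s\sigma(r+\upsilon u,\upsilon)\,du}\,ds$ together with Fubini (equivalently, observing that both formulations express the mild solution of $\partial_t\psi=\upsilon\cdot\nabla\psi+(\ebS+\ebF)\psi$ with boundary killing on $\partial D^+$ for different splittings of the generator), converts this to the Duhamel form \eqref{mild}. The boundary condition \eqref{BC} is built into the definition of $\texttt{U}_t$.

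Uniqueness among solutions that are bounded in time on each finite horizon follows by Gronwall. If $\phi$ and $\tilde\phi$ are two such solutions of \eqref{mild}, taking differences and using $\|\texttt{U}_s f\|_\infty\leq\|f\|_\infty$ together with the $L^\infty$ bounds on the coefficients yields
$$\|\phi_t-\tilde\phi_t\|_\infty\leq K\int_0^t\|\phi_{t-s}-\tilde\phi_{t-s}\|_\infty\,ds$$
for some $K$ depending only on the sup norms of $\sigma$ and $m_{\texttt{f}}$, and Gronwall's lemma then forces $\phi\equiv\tilde\phi$ on any finite interval. The main technical obstacle, in my view, is the clean passage between the Feynman--Kac and Duhamel forms of the mild equation; the most transparent way to handle it is to establish both forms independently as mild solutions and appeal to the Gronwall uniqueness just proved, rather than trying to convert one into the other by a single algebraic manipulation.
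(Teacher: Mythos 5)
Your proposal is correct and follows essentially the same route as the argument this paper relies on (it defers to Lemma 6.1 of \cite{MultiNTE}, whose strategy is reproduced here in the proofs of Lemma \ref{M2F} and Theorem \ref{doobhthrm}): condition on the first scattering or fission event to obtain a Feynman--Kac recursion with multiplicative killing at rate $\sigma$ along the advection flow, pass to the Duhamel form \eqref{mild}, and deduce uniqueness from Gr\"onwall's lemma. The only divergence is at the conversion step, where the paper invokes Lemma 1.2, Chapter 4 of \cite{Dynkin2} to trade the exponential potential for the additive one, while you propose either the direct Fubini manipulation or the detour of checking that both forms solve the same equation and applying the uniqueness already proved; these are equivalent in substance. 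One small caution on the a priori bound: without (H4) the offspring number need not be bounded, so pathwise domination by a branching process producing at most $\bar m\vee 1$ offspring per event is unavailable, but since only the first moment is needed, the renewal inequality $M_t\le 1+\bar\sigma\,\bar m\int_0^t M_s\,{\rm d}s$ (equivalently, the many-to-one representation with potential $\beta\le\overline{\beta}$) delivers the same exponential bound.
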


In \eqref{mild}, $\texttt{S}$ is the corresponding scattering operator with action 
\[
\texttt{S} f(r,\upsilon) = \sigma_{\texttt{s}}(r,\upsilon)\int_V \Big(f(r, \upsilon')  -f(r,\upsilon)\Big)\pi_{\texttt{s}}(r,\upsilon,\upsilon')\d \upsilon', \qquad f\in L^+_\infty(D\times V),
\] and $\texttt{F}$ the fission operator with action
\[
\texttt{S} f(r,\upsilon) = \sigma_{\texttt{f}}(r,\upsilon)\int_V f(r, \upsilon')\pi_{\texttt{f}}(r,\upsilon,\upsilon')\d \upsilon'
- \sigma_{\texttt{f}}(r,\upsilon)f(r,\upsilon),\qquad  f\in L^+_\infty(D\times V),
\] 
see \cite{MultiNTE} for further details. 
The connection of the expectation semigroup \eqref{semigroup} with the NTE \eqref{bNTE} was explored in the recent articles \cite{MultiNTE, SNTE} (see also older work in \cite{D, LPS}).

\subsection{The many-to-one representation}\label{m21subsection}
It turns out that there is a second stochastic representation of the unique solution to \eqref{mild}. In order to describe it, we need to introduce the notion of a {\it neutron random walk} (NRW).

\smallskip

A NRW on $D$, which is defined by its scatter rates,
$\alpha(r,\upsilon)$, $r\in D, \upsilon\in V$, and scatter probability
densities $\pi(r,\upsilon,\upsilon')$,
$r\in D, \upsilon,\upsilon'\in V$ such that
$\textstyle{\int_V \pi(r,\upsilon, \upsilon')\d\upsilon'}=1$ for all
$r\in D, \upsilon\in V$. Simply, when issued with a velocity
$\upsilon$, the NRW will propagate linearly with that velocity until
either it exits the domain $D$, in which case it is killed, or at the
random time $T_{\texttt{s}}$ a scattering occurs, where
$ \Pr(T_{\texttt{s}}>t) = \exp\{-\textstyle{\int_0^t}
\alpha(r+\upsilon t, \upsilon){\rm d}s \}, $ for $t\geq0.$ When the
scattering event occurs in position-velocity configuration
$(r,\upsilon)$, a new velocity $\upsilon'$ is selected with
probability $\pi(r,\upsilon,\upsilon')\d\upsilon'$. If we denote by
$(R,\Upsilon) = ((R_t, \Upsilon_t), t\geq 0)$, the position-velocity
of the resulting continuous-time random walk on $D\times V$ with an
additional cemetery state $\{\dagger\}$ for when it leaves the domain
$D$, then it is easy to show that $(R,\Upsilon)$ is a Markov process.
(Note, neither $R$ nor $\Upsilon$ alone is Markovian.) We call the
process $(R,\Upsilon)$ an $\alpha\pi$-NRW. It is worth remarking that
when $\alpha\pi$ is given as a single rate function, the density
$\pi$, and hence the rate $\alpha$, is uniquely identified by 
normalising  the given product form $\alpha\pi$ to make it a probability distribution.

\smallskip

To  describe the second stochastic representation of \eqref{mild},  we shall take 
\begin{align}
\alpha(r,\upsilon)\pi(r, \upsilon, \upsilon') = \sigma_{\texttt{s}}(r,\upsilon)\pi_{\texttt{s}}(r, \upsilon, \upsilon') + \sigma_{\texttt{f}}(r,\upsilon) \pi_{\texttt{f}}(r, \upsilon, \upsilon')\qquad r\in D, \upsilon, \upsilon'\in V.
\label{alpha}
\end{align}
We also need to define
\begin{equation}
\beta(r,\upsilon)=\sigma_{\texttt{f}}(r,\upsilon)\left(\int_V\pi_{\texttt{f}}(r, \upsilon,\upsilon')\d\upsilon'-1\right)\geq -\sup_{r\in D, \upsilon\in V}\sigma_{\texttt{f}}(r,\upsilon)>-\infty,
\label{betadef}
\end{equation}
where the uniform lower bound is due to assumption (H1).
The following result was established in Lemma 6.1 of \cite{MultiNTE}.
\begin{lem}[Many-to-one formula, \cite{MultiNTE}]\label{NRWrep} Under the assumptions of Lemma 
\ref{NBPrep}, we have the second representation 
\begin{equation}
\psi_t[g](r,\upsilon)  = \mathbf{E}_{(r,\upsilon)}\left[{\rm e}^{\int_0^t\beta(R_s, \Upsilon_s)\D s}g(R_t, \Upsilon_t) \mathbf{1}_{(t < \tau^D)}\right], \qquad t\geq 0,r\in D, \upsilon\in V,
\label{phi}
\end{equation}
where $\tau^D = \inf\{t>0 : R_t\not\in D\}$ and 
 ${\bf P}_{(r, v)}$ for the law of the $\alpha\pi$-NRW  starting from a single neutron with configuration $(r, \upsilon)$. 
\end{lem}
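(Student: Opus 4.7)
The strategy is to show that the right-hand side of \eqref{phi}, which I denote $\phi_t[g](r,\upsilon)$, satisfies the very same renewal-type integral equation as $\psi_t[g](r,\upsilon)$, and then conclude by appealing to the uniqueness of bounded solutions from Lemma~\ref{NBPrep}.

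\textbf{First-jump decomposition for $\psi_t[g]$.} I would first derive a first-event decomposition on the branching side. Conditioning the NBP on the time $\tau_1$ of the first event in the tree (the minimum of the independent scatter and fission clocks, of combined rate $\sigma := \sigma_{\texttt{s}}+\sigma_{\texttt{f}}$) and invoking the Markov branching property, the contribution at a scatter is driven by a fresh particle with velocity drawn from $\pi_{\texttt{s}}$, while the contribution at a fission is the expectation of $\sum_i \psi_{t-s}[g](r+\upsilon s,\upsilon_i)$ under $\mathcal{P}_{(r+\upsilon s,\upsilon)}$, which by \eqref{Erv} equals $\int_V \pi_{\texttt{f}}(r+\upsilon s,\upsilon,\upsilon')\psi_{t-s}[g](r+\upsilon s,\upsilon')\d\upsilon'$. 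Putting this together yields
\begin{align*}
\psi_t[g](r,\upsilon) &= {\rm e}^{-\int_0^t\sigma(r+\upsilon s,\upsilon)\d s}g(r+\upsilon t,\upsilon)\mathbf{1}_{\{t<\kappa_{r,\upsilon}^D\}}\\
&\quad + \int_0^{t\wedge \kappa_{r,\upsilon}^D}{\rm e}^{-\int_0^s\sigma(r+\upsilon u,\upsilon)\d u}\!\int_V\!\bigl[\sigma_{\texttt{s}}\pi_{\texttt{s}}+\sigma_{\texttt{f}}\pi_{\texttt{f}}\bigr](r+\upsilon s,\upsilon,\upsilon')\psi_{t-s}[g](r+\upsilon s,\upsilon')\d\upsilon'\d s.
\end{align*}

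\textbf{First-jump decomposition for $\phi_t[g]$.} On the walk side, I would apply the strong Markov property of the $\alpha\pi$-NRW at its first scattering time $T_{\texttt{s}}$, whose density on $[0,\kappa_{r,\upsilon}^D)$ is $\alpha(r+\upsilon s,\upsilon){\rm e}^{-\int_0^s\alpha(r+\upsilon u,\upsilon)\d u}$. Before $T_{\texttt{s}}$ the walker moves deterministically along $(r+\upsilon s,\upsilon)$, and at $T_{\texttt{s}}$ the new velocity is drawn from $\pi(r+\upsilon T_{\texttt{s}},\upsilon,\cdot)$; this yields
\begin{align*}
\phi_t[g](r,\upsilon) &= {\rm e}^{-\int_0^t(\alpha-\beta)(r+\upsilon s,\upsilon)\d s}g(r+\upsilon t,\upsilon)\mathbf{1}_{\{t<\kappa_{r,\upsilon}^D\}}\\
&\quad +\int_0^{t\wedge \kappa_{r,\upsilon}^D}\alpha(r+\upsilon s,\upsilon){\rm e}^{-\int_0^s(\alpha-\beta)(r+\upsilon u,\upsilon)\d u}\!\int_V\!\pi(r+\upsilon s,\upsilon,\upsilon')\phi_{t-s}[g](r+\upsilon s,\upsilon')\d\upsilon'\d s.
\end{align*}

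\textbf{Matching and uniqueness.} The two equations are identified via the algebraic observations $\alpha\pi=\sigma_{\texttt{s}}\pi_{\texttt{s}}+\sigma_{\texttt{f}}\pi_{\texttt{f}}$ (by \eqref{alpha}), $\alpha=\int_V\alpha\pi\,\d\upsilon'=\sigma_{\texttt{s}}+\sigma_{\texttt{f}}m_{\texttt{f}}$, and therefore $\alpha-\beta=\sigma_{\texttt{s}}+\sigma_{\texttt{f}}=\sigma$ by \eqref{betadef}. Hence $\phi_t[g]$ satisfies the same renewal identity, which is equivalent to the mild equation \eqref{mild}. Since (H1) and the boundedness of $g$ ensure $\phi_t[g]$ is uniformly bounded by $\|g\|_\infty\exp(t\sup_{D\times V}\beta^+)<\infty$, the uniqueness assertion of Lemma~\ref{NBPrep} forces $\phi_t[g]=\psi_t[g]$.

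\textbf{Main obstacle.} The delicate point is the branching-side first-jump decomposition: one must interchange the expectation over the first-event time (and over the fission offspring law $\mathcal{P}$) with the conditional expectation $\mathbb{E}_{\delta_{(r+\upsilon s,\upsilon')}}[\langle g, X_{t-s}\rangle]$ that defines $\psi_{t-s}[g]$. The uniform bound $\psi_{t-s}[g]\le C_1{\rm e}^{C_2 t}$ supplied by Lemma~\ref{NBPrep}, together with the boundedness of cross-sections in (H1), is exactly what justifies the Fubini/dominated-convergence steps; and the fact that only the \emph{mean} offspring measure $\pi_{\texttt{f}}$ enters (not the full law of $\mathcal{Z}$) is consistent with the non-uniqueness remarked in Remark~\ref{nonuniqueNBP}.
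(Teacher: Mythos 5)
Your proof is correct: the first-event decompositions on both sides are right, the algebraic identification $\alpha\pi=\sigma_{\texttt{s}}\pi_{\texttt{s}}+\sigma_{\texttt{f}}\pi_{\texttt{f}}$ and $\alpha-\beta=\sigma_{\texttt{s}}+\sigma_{\texttt{f}}$ is exactly what makes the two renewal equations coincide, and the appeal to bounded-solution uniqueness closes the argument. This is essentially the same route as the cited Lemma 6.1 of \cite{MultiNTE}, and it mirrors the argument the paper itself sketches in the appendix for the analogous many-to-two formula (first-event conditioning, conversion between exponential and additive potentials via Dynkin's lemma, then Gr\"onwall for uniqueness).
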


Define $\overline{\beta} := \textstyle{\sup_{r\in D, \upsilon\in V}\beta(r,\upsilon)}$ and note that it is uniformly bounded thanks to (H1) and (H4).  Let us now introduce  $\texttt{P}^\dagger: = (\texttt{P}^\dagger_t, t\geq 0)$ for the expectation semigroup of the $\alpha\pi$-neutron random walk with potential $\beta$, such as is represented by the semigroup \eqref{phi}, but now killed at rate $ (\overline{\beta}-\beta)$. More precisely, for $g\in L^+_\infty (D\times V)$,
\begin{align}
\texttt{P}^\dagger_t[g](r,\upsilon) &= \psi_t[g](r,\upsilon){\rm e}^{-\overline{\beta} t}\notag\\
& =\mathbf{E}_{(r,\upsilon)}\left[{\rm e}^{\int_0^t (\beta (R_s, \Upsilon_s) - \overline{\beta} )\d s}g(R_t, \Upsilon_t) \mathbf{1}_{(t<\tau^D)}\right]\notag \\
&=\mathbf{E}_{(r,\upsilon)}\left[g(R_t, \Upsilon_t) \mathbf{1}_{(t<\texttt{k})}\right] \notag\\
&= :\mathbf{E}^\dagger_{(r,\upsilon)}\left[g(R_t, \Upsilon_t)\right] ,%
\qquad t\geq 0, r\in D, \upsilon\in V,
\label{boldPdagger}
\end{align}
 where 
 \[
 \texttt{k} =\inf\{t>0: \int_0^t (\overline{\beta} - \beta(R_s, \Upsilon_s) )\d s >\mathbf{e}\}\wedge \tau^D,
 \]
and $\mathbf{e}$ is an independent exponentially distributed random variable with mean 1.
We will henceforth write  $\mathbf{P}^{\dagger}: = (\mathbf{P}^{\dagger}_{(r,\upsilon)}, r\in \bar D, \upsilon\in V)$ for  the associated  (sub)probability measures associated to $\mathbf{E}^{\dagger}_{(r,\upsilon)}$, $r\in \bar D, \upsilon\in V$. The family $\mathbf{P}^{\dagger}$  now defines a Markov family of probability measures on the path space of the neutron random walk with cemetery state $\{\dagger\}$; note, paths are sent to the cemetery state either when  hitting the boundary $\partial D$ or the clock associated to the killing rate $\overline{\beta} - \beta$ rings. Note also that this process is a killed Piecewise Deterministic Markov Process, as introduced in \cite{davis_piecewise-deterministic_1984}. \smallskip


\begin{remark}\label{CVtheoremupgrade}\rm It is worthy of  discussion  that the proof of Theorem \ref{CVtheorem} in \cite{SNTE} uses the representation \eqref{boldPdagger}. Moreover, examination of the proof there reveals that, despite the setting of \eqref{boldPdagger}  forcing a relationship between  the value $\beta$ and the cross sections in the NTE, this is not needed in the proof. The only requirement is that $\beta$ is uniformly bounded from above. We will use this fact later on in this paper.
\hfill$\diamond$\end{remark}

\section{Spectral asymptotics}
Theorem 5.3 in \cite{MultiNTE} also gives the leading order behaviour of the mild solution of the NTE, mirroring similar results for the setting where the NTE is cited as an abstract Cauchy problem (c.f. Theorem 7.1 in \cite{SNTE} and \cite{M-K}); for convenience we reproduce it here.

\begin{theo}\label{CVtheorem}
Suppose that (H1) and (H2) hold. Then, for the semigroup  $(\psi_t,t\geq0)$ identified by \eqref{mild},  there exists  a $\lambda_*\in\mathbb{R}$, a  positive\footnote{To be precise, by a positive eigenfunction, we mean a mapping from $D\times V\to (0,\infty)$. This does not prevent it being valued zero on $\partial D$, as $D$ is an open bounded, convex domain.} right eigenfunction $\varphi \in L^+_\infty(D\times V)$ and a left eigenmeasure which is absolutely continuous with respect to Lebesgue measure on $D\times V$ with density $\tilde\varphi\in L^+_\infty(D\times V)$, both having associated eigenvalue ${\rm e}^{\lambda_* t}$, and such that $\varphi$  (resp. $\tilde\varphi$) is uniformly (resp. a.e. uniformly) bounded away from zero on each compactly embedded subset of $D\times V$. In particular, for all $g\in L^+_{\infty}(D\times V)$,
\begin{equation}
\langle\tilde\varphi, \psi_t[g]\rangle = {\rm e}^{\lambda_* t}\langle\tilde\varphi,g\rangle\quad  \text{(resp. } 
\psi_t[\varphi] = {\rm e}^{\lambda_* t}\varphi
\text{)} \quad t\ge 0.
\label{leftandright}
\end{equation}
Moreover, there exists $\varepsilon>0$ such that 
\begin{equation}
\sup_{g\in L^+_\infty(D\times V): \norm{g}_\infty\leq 1}  \left\|{\rm e}^{-\lambda_* t}{\varphi}^{-1}{\psi_t[g]}-\langle\tilde\varphi, g\rangle\right\|_\infty = O({\rm e}^{-\varepsilon t}) \text{ as $t\rightarrow\infty$.}
\label{spectralexpsgp}
\end{equation}
%
\end{theo}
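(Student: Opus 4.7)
The plan is to reduce the theorem to a Perron--Frobenius-type statement for the killed single-particle semigroup $(\texttt{P}^\dagger_t)_{t\geq 0}$ defined in \eqref{boldPdagger}. Since $\texttt{P}^\dagger_t[g] = {\rm e}^{-\overline{\beta} t}\psi_t[g]$ by \eqref{boldPdagger}, identifying the triple $(\lambda_*, \varphi, \tilde\varphi)$ for $(\psi_t)$ is equivalent to identifying the analogous principal triple for the sub-Markov semigroup $(\texttt{P}^\dagger_t)$ with principal eigenvalue $\mu_* := \lambda_* - \overline{\beta}$. This is convenient because $\texttt{P}^\dagger_t$ is the semigroup of an honest-looking piecewise-deterministic Markov process with killing, so standard quasi-stationarity technology applies.

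The first and most substantial step is to establish a Doeblin-type minorization for $(\texttt{P}^\dagger_t)$. Using \textbf{(H2)} to guarantee that the $\alpha\pi$-NRW can scatter everywhere, \textbf{(H3)} to pin down a ball $B\subset\subset D$ where the integrand in $\beta$ is uniformly positive, and \textbf{(H1)} to keep rates and the killing potential $\overline{\beta}-\beta$ uniformly bounded, I would show that there exist $t_0 > 0$, a constant $c>0$, and a probability measure $\nu$ supported on a compact subset $K \subset B\times V$ such that, for every $(r,\upsilon)$ in a suitable compactly embedded subset of $D\times V$,
\[
\texttt{P}^\dagger_{t_0}\bigl((r,\upsilon), \cdot \bigr) \;\geq\; c\,\nu(\cdot).
\]
The construction routes a generic NRW trajectory through a bounded number of scatterings from $(r,\upsilon)$ into $K$ within time $t_0$, uses \textbf{(H2)} and the uniform lower bound afforded by \textbf{(H3)} on the scatter/fission density to give a pointwise bound on the resulting transition density on $K$, and uses the uniform upper bound on $\overline{\beta}-\beta$ to control the multiplicative killing along the route.

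With this minorization in hand, I would invoke a Krein--Rutman / Champagnat--Villemonais type argument to produce $\mu_*$, a right eigenfunction $\varphi\in L^+_\infty(D\times V)$, and a left eigenmeasure with density $\tilde\varphi\in L^+_\infty(D\times V)$. Positivity on compactly embedded subsets follows from irreducibility built into the minorization together with the probabilistic representation: $\varphi(r,\upsilon)$ is essentially a weighted survival quantity that is strictly positive whenever the NRW can survive in $D$ for a positive time, and similarly for $\tilde\varphi$. For the uniform exponential convergence \eqref{spectralexpsgp}, the minorization provides a spectral gap: after the Doob $h$-transform by $\varphi$, the rescaled semigroup ${\rm e}^{-\mu_* t}\varphi^{-1}\texttt{P}^\dagger_t[\varphi\,\cdot\,]$ is a Markov semigroup with the Doeblin property, hence is exponentially ergodic to its invariant law $\langle \tilde\varphi, \varphi\,\cdot\,\rangle/\langle\tilde\varphi,\varphi\rangle$ in total variation, and unwinding the $h$-transform together with boundedness of $\varphi$ on the support gives the desired supremum bound uniformly in $\|g\|_\infty \leq 1$.

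The main obstacle is the first step. A uniform minorization cannot hold at points $(r,\upsilon)$ with $r$ arbitrarily close to $\partial D$ and $\upsilon\cdot\mathbf{n}_r>0$, since such particles are killed almost immediately. This is the reason the conclusion only asserts positivity of $\varphi,\tilde\varphi$ on compactly embedded subsets rather than uniformly on $D\times V$. To turn the ``interior'' minorization into the statement of \eqref{spectralexpsgp} one must absorb the boundary layer, which I would handle either by introducing a Lyapunov function capturing the time to exit from a shrinking neighbourhood of $\partial D$, or by proving the result first on a sequence of compactly embedded subdomains and then passing to the limit using monotonicity of the eigenfunctions and continuity of $\mu_*$ in the domain. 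All subsequent steps are then relatively routine modifications of the Perron--Frobenius machinery.
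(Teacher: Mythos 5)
Note first that this paper does not actually prove Theorem \ref{CVtheorem}: it is imported from \cite{MultiNTE} (Theorem 5.3) and \cite{SNTE} (Theorem 7.1), and Remark \ref{CVtheoremupgrade} records that the cited proof is built on the killed many-to-one semigroup $\texttt{P}^\dagger$ of \eqref{boldPdagger} --- precisely the reduction you propose. Your sketch (Doeblin minorization for the killed NRW, a Champagnat--Villemonais/Krein--Rutman argument to extract $(\lambda_*,\varphi,\tilde\varphi)$, exponential ergodicity of the $\varphi$-transformed semigroup, with the boundary layer treated separately because the minorization cannot be uniform up to $\partial D^+$) is the same strategy as the proof in the companion paper, so at the level of approach there is nothing to separate the two. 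One correction, though: the theorem is stated under (H1)--(H2) only, and your minorization step should not invoke (H3). What drives the Doeblin bound is the uniform strict positivity of the combined scatter kernel $\alpha\pi = \sigma_{\texttt{s}}\pi_{\texttt{s}}+\sigma_{\texttt{f}}\pi_{\texttt{f}}$, which is exactly (H2); (H3) concerns positivity of the fission channel alone and is reserved in this series of papers for statements about the branching process itself (survival, martingale limits), not for the spectral decomposition of the expectation semigroup. As written, your argument would only establish the theorem under a superfluous extra hypothesis, so you should rework the routing of trajectories into the reference set $K$ using the lower bound from (H2) rather than (H3).
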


In parallel with the asymptotic spectral decomposition of the semigroup given in \eqref{spectralexpsgp},  \cite{SNTE, SNTE-II} also considered analogous stochastic behaviour, which we briefly discuss.

\smallskip

An important object that emerges in this context is the process 
 \[
 W_t: = {\rm e}^{-\lambda_* t}\frac{\langle \varphi, X_t\rangle}{\langle \varphi, \mu\rangle},\qquad  t\geq 0,
 \]
  which is a martingale with unit mean under $\mathbb{P}_\mu$.  As a non-negative martingale, it automatically has an almost sure limit, and, from \cite{SNTE}, we have the following  understanding of its asymptotic behaviour:
  \begin{theo}[\cite{SNTE}]
The martingale limit $W_\infty$ is identically zero almost surely if and only if $\lambda_*\leq 0$, otherwise $(W_t, t\geq0)$ is $L_2(\mathbb{P}_\mu)$-convergent with $\mathbb{E}_\mu[W_\infty]=1$ and $\mathbb{E}_\mu[W_\infty^2]<\infty$.
\end{theo}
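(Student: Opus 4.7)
The plan is to prove the dichotomy in two regimes. First, I would verify that $(W_t)_{t\geq 0}$ is a non-negative unit-mean $(\mathcal{F}_t)$-martingale: the Markov branching property at time $t$, combined with the right-eigenfunction identity $\psi_s[\varphi] = \mathrm{e}^{\lambda_* s}\varphi$ from Theorem \ref{CVtheorem}, gives
\begin{equation*}
\mathbb{E}_\mu[\langle \varphi, X_{t+s}\rangle\mid\mathcal{F}_t] = \sum_{i=1}^{N_t}\psi_s[\varphi](r_i(t),\upsilon_i(t)) = \mathrm{e}^{\lambda_* s}\langle \varphi, X_t\rangle,
\end{equation*}
so that $\mathbb{E}_\mu[W_{t+s}\mid\mathcal{F}_t] = W_t$. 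Hence $W_\infty := \lim_{t\to\infty} W_t$ exists almost surely, with $\mathbb{E}_\mu[W_\infty]\leq 1$ by Fatou.

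For the supercritical regime ($\lambda_* > 0$), I would derive a many-to-two identity of the form
\begin{equation*}
\mathbb{E}_{\delta_{(r,\upsilon)}}[\langle \varphi, X_t\rangle^2] = \psi_t[\varphi^2](r,\upsilon) + \int_0^t \psi_s\!\bigl[\mathbb{V}[\psi_{t-s}[\varphi]]\bigr](r,\upsilon)\,\d s,
\end{equation*}
where the infinitesimal variance functional $\mathbb{V}[f]$ is built from $\sigma_{\texttt{s}},\pi_{\texttt{s}},\sigma_{\texttt{f}}$ and the fission offspring law $\mathcal{P}_{(r,\upsilon)}$, and satisfies $\mathbb{V}[f] \leq C\|f\|_\infty^2$ under (H1) and (H4). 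Substituting $\psi_{t-s}[\varphi] = \mathrm{e}^{\lambda_*(t-s)}\varphi$ and using the uniform spectral bound $\psi_s[h] = O(\mathrm{e}^{\lambda_* s})$ from Theorem \ref{CVtheorem}, the integral evaluates to $O(\mathrm{e}^{2\lambda_* t})$ when $\lambda_*>0$. Consequently $\sup_{t\geq 0}\mathbb{E}_\mu[W_t^2]<\infty$, which yields $L_2$-convergence of $W_t$ to $W_\infty$ together with $\mathbb{E}_\mu[W_\infty] = 1$ and $\mathbb{E}_\mu[W_\infty^2]<\infty$.

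For the non-supercritical regime ($\lambda_* \leq 0$), I would show $W_\infty = 0$ almost surely. When $\lambda_* < 0$, Theorem \ref{CVtheorem} gives $\mathbb{E}_\mu[N_t] = \psi_t[\mathbf{1}](\mu) = O(\mathrm{e}^{\lambda_* t})\to 0$, so $\mathbb{P}_\mu(N_t\geq 1)\leq \mathbb{E}_\mu[N_t]\to 0$; since $\{N_t = 0\}$ is an absorbing (hence monotone in $t$) event by the branching property with no immigration, this upgrades to almost-sure extinction, on which $W_t$ vanishes for all sufficiently large $t$. In the critical case $\lambda_* = 0$, the same second-moment identity instead yields $\mathbb{E}_\mu[W_t^2]\to\infty$ at a rate linear in $t$; combined with the non-degeneracy of fission from (H3), a standard branching-process argument (showing that on the survival event $W_t$ must make genuine non-vanishing fluctuations) rules out almost-sure convergence of $W_t$ to a strictly positive limit, again forcing $W_\infty = 0$.

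The main obstacle is the critical case $\lambda_* = 0$: the first-moment bound no longer implies extinction, and one cannot directly read off $W_\infty = 0$ from divergence of the variance. The argument there must exploit (H3) and (H4) to guarantee that the fission mechanism is non-trivial throughout the support of the process, so that $W_t$ retains enough fluctuation on the survival event to preclude a positive almost-sure limit. By comparison, the supercritical $L_2$-bound and the subcritical extinction argument are routine once the many-to-two identity and the spectral estimate $\psi_s[h] = O(\mathrm{e}^{\lambda_* s})$ are available.
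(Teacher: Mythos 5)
First, note that the paper does not prove this statement itself: it is quoted verbatim from \cite{SNTE}, so there is no internal proof to compare against. Judged on its own merits, your martingale verification, your supercritical $L_2$ argument, and your subcritical extinction argument are all sound. The second-moment identity you invoke is exactly the many-to-two formula the paper records as Lemma \ref{M2F} (with $f=g=\varphi$), and combining $\psi_{t-s}[\varphi]=\mathrm{e}^{\lambda_*(t-s)}\varphi$ with $\psi_s[\eta_{\texttt{f}}[\varphi]]=O(\mathrm{e}^{\lambda_* s})$ from Theorem \ref{CVtheorem} does give $\sup_t\mathbb{E}_\mu[W_t^2]<\infty$ when $\lambda_*>0$; likewise, for $\lambda_*<0$ the first-moment bound plus monotonicity of $\{N_t=0\}$ yields almost sure extinction and hence $W_\infty=0$.

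The genuine gap is the critical case $\lambda_*=0$, and the mechanism you propose there does not work. You argue that $\mathbb{E}_\mu[W_t^2]\to\infty$ together with ``non-vanishing fluctuations'' rules out a strictly positive limit; but $W$ is a non-negative martingale, so it converges almost surely regardless, and unbounded second moments of a unit-mean non-negative martingale do \emph{not} force the limit to vanish (a supercritical Galton--Watson martingale with infinite offspring variance but finite $L\log L$ moment has $\mathbb{E}[W_t^2]\to\infty$ yet $W_\infty>0$ on survival). So the implication your sketch rests on is false in general, and (H3)/(H4) are not by themselves enough to repair it. The correct route is either to prove almost sure extinction at criticality --- which is precisely what the companion papers establish and what this paper's own footnote alludes to (``at criticality, the underlying process of fission will die out with probability one'') --- or to run the spine/size-biasing argument of \cite{SNTE}: under the change of measure $\mathrm{d}\tilde{\mathbb{P}}_\mu/\mathrm{d}\mathbb{P}_\mu|_{\mathcal{F}_t}=W_t$, the spine is the positive-recurrent $\varphi$-transformed NRW, immigration along it occurs at a rate bounded below, and when $\lambda_*\le 0$ the immigrated contributions do not decay, whence $\limsup_t W_t=\infty$ $\tilde{\mathbb{P}}_\mu$-a.s.\ and therefore $W_\infty=0$ $\mathbb{P}_\mu$-a.s. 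Either of these substitutes is a substantive argument that your proposal is missing.
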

  
In essence, we can think of the martingale $W$ as give us information about the stochastic growth of $\langle g, X_t\rangle $ for the special case that $g = \varphi$. The full picture was given in \cite{SNTE-II}.
  
\begin{theo}[\cite{SNTE-II}]\label{SLLN} 
For all $g\in L^+_\infty(D\times V)$ such that, up to a multiplicative constant, $g\leq \varphi$, under the assumptions of Theorem \ref{CVtheorem}, by normalising $\varphi, \tilde\varphi$ such that $\langle\varphi, \tilde\varphi\rangle =1$, we have 
\begin{equation} \label{eq:1}
\lim_{t\to\infty} {\rm e}^{-\lambda_* t}\frac{\langle g, X_t\rangle}{\langle\varphi, \mu\rangle}=  \langle g,\tilde{\varphi}\rangle W_\infty.
\end{equation}
$\mathbb{P}_\mu$-almost surely, for $\mu\in\mathcal{M}(D\times V)$. 
\end{theo}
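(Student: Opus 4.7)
The plan is to combine the Markov branching property of $X$ with the uniform spectral estimate \eqref{spectralexpsgp} and a second-moment argument of the type standard for spatial supercritical branching processes. By the independence of the $\mathbb{P}_{\delta_{(r_i,\upsilon_i)}}$-distributed subtrees it suffices to prove \eqref{eq:1} when $\mu = \delta_{(r,\upsilon)}$ is a single Dirac mass; the general case then follows since both sides of \eqref{eq:1} are linear in $\mu$ and the initial subtrees are independent.

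Fix $\mu = \delta_{(r,\upsilon)}$ and $0 \le s < t$. The Markov branching property at time $s$ gives
$$
\langle g, X_t\rangle \;=\; \sum_{i=1}^{N_s}\langle g, X^{(i,s)}_{t-s}\rangle,
$$
where, conditionally on $\mathcal F_s$, the $X^{(i,s)}$ are independent copies of the NBP issued from $(r_i(s),\upsilon_i(s))$. Taking $\mathcal F_s$-conditional expectations yields $\mathbb E_\mu[\langle g, X_t\rangle \mid \mathcal F_s] = \langle \psi_{t-s}[g], X_s\rangle$; applying \eqref{spectralexpsgp} pointwise in the atoms of $X_s$ and multiplying by $\mathrm e^{-\lambda_* t}$ gives
$$
\mathrm e^{-\lambda_* t}\mathbb E_\mu\bigl[\langle g, X_t\rangle\mid\mathcal F_s\bigr] \;=\; \langle\tilde\varphi,g\rangle\, W_s\langle\varphi,\mu\rangle \;+\; R(t,s),
$$
with $|R(t,s)|\le C\|g\|_\infty \mathrm e^{-\varepsilon(t-s)}W_s\langle\varphi,\mu\rangle\to 0$ as $t-s\to\infty$. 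Since $W_s\to W_\infty$ almost surely as $s\to\infty$, any diagonal limit with $s\to\infty$ and $t-s\to\infty$ identifies the candidate limit as $\langle\tilde\varphi,g\rangle W_\infty \langle\varphi,\mu\rangle$.

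The main step — and what I expect to be the principal obstacle — is to control the centred fluctuation
$$
\Delta(t,s) \;:=\; \mathrm e^{-\lambda_* t}\sum_{i=1}^{N_s}\Bigl(\langle g, X^{(i,s)}_{t-s}\rangle - \psi_{t-s}[g](r_i(s),\upsilon_i(s))\Bigr),
$$
whose summands are $\mathcal F_s$-conditionally independent with mean zero. The conditional variance is a sum of single-particle second moments, so
$$
\mathbb E_\mu\!\bigl[\Delta(t,s)^2\bigr] \;\le\; \mathrm e^{-2\lambda_* t}\,\mathbb E_\mu\!\left[\sum_{i=1}^{N_s}\mathbb E_{\delta_{(r_i(s),\upsilon_i(s))}}\!\bigl[\langle g, X_{t-s}\rangle^2\bigr]\right].
$$
The crucial input is a pointwise second-moment bound $\mathbb E_{(r',\upsilon')}[\langle g, X_u\rangle^2]\le C'\varphi(r',\upsilon')^2\mathrm e^{2\lambda_* u}$, which follows from the hypothesis $g\le c\varphi$ together with the $L^2$-boundedness of $W$ established in the theorem immediately preceding \ref{SLLN}, supplemented by (H4) to keep the fission variance finite. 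Substituting and using $\varphi^2\le\|\varphi\|_\infty\varphi$ reduces the bound to
$$
\mathbb E_\mu[\Delta(t,s)^2]\;\le\; C''\,\mathrm e^{-\lambda_* s}\|\varphi\|_\infty\langle\varphi,\mu\rangle,
$$
uniformly in $t\ge s$, which decays geometrically in $s$ when $\lambda_*>0$.

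To pass to almost-sure convergence, I would take a lattice $t_n=n\delta$ and look-back times $s_n = t_n - a_n$ with $a_n$ chosen so that both $\mathrm e^{-\varepsilon a_n}$ (controlling $R(t_n,s_n)$) and $\mathrm e^{-\lambda_* s_n}$ (controlling $\Delta(t_n,s_n)$ via Chebyshev) are summable; Borel--Cantelli then yields \eqref{eq:1} along the subsequence $(t_n)$. The gaps $[t_n,t_{n+1}]$ are filled by a pathwise bound: each jump of $t\mapsto\langle g,X_t\rangle$ has magnitude at most $n_{\texttt{max}}\|g\|_\infty$ by (H4), and the number of branching events in $[t_n,t_{n+1}]$ admits an exponential tail bound from (H1), so the oscillation over such an interval is $o(\mathrm e^{\lambda_* t_n})$ as $\delta\downarrow 0$ along a countable sequence. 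Finally, when $\lambda_*\le 0$ the right-hand side of \eqref{eq:1} vanishes since $W_\infty=0$ a.s., and the left-hand side also vanishes by combining the first-moment identity with \eqref{spectralexpsgp} and a simpler second-moment estimate.
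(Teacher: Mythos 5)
The paper does not prove this result; it is imported from \cite{SNTE-II}, so your proposal can only be compared with the argument there, which follows the same classical skeleton you describe: first-moment asymptotics from \eqref{spectralexpsgp}, a conditional second-moment bound of exactly the form $\mathbb{E}_\mu[\Delta(t,s)^2]\le C{\rm e}^{-\lambda_* s}\langle\varphi,\mu\rangle$ obtained from the many-to-two formula and the $L^2$-boundedness of $W$, Borel--Cantelli along lattice times, and then an interpolation step. Up to and including the lattice-time conclusion in the supercritical case, your argument is sound and is essentially the standard route.

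The genuine gap is the passage from lattice times to continuous time. You claim that the oscillation of $t\mapsto\langle g,X_t\rangle$ over $[t_n,t_{n+1}]$ is controlled by the jumps at branching events. It is not: $\langle g,X_t\rangle=\sum_i g(r_i(t),\upsilon_i(t))$ also varies because particles are transported and scattered, and since $g$ is only assumed bounded and measurable, the contribution of a single surviving particle can change by as much as $\|g\|_\infty$ over an arbitrarily short time window (take $g$ to be $\varphi$ times the indicator of a set with irregular boundary). The worst-case oscillation over a window of length $\delta$ is therefore of order $\|g\|_\infty\sup_{u\le\delta}N_{t_n+u}$, which on the survival event is $\Theta({\rm e}^{\lambda_* t_n})$ --- the same order as the quantity being estimated --- and it does not become $o({\rm e}^{\lambda_* t_n})$ as $\delta\downarrow 0$ unless one already controls the spatial distribution of the particles, which is essentially the content of the theorem; as written the step is circular. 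This is exactly where the proof in \cite{SNTE-II} has to work hardest, using a more delicate sandwiching of $\langle g, X_{t}\rangle$ for $t$ in the window against functionals of $X_{t_n}$ together with maximal inequalities, rather than a count of branching events. A secondary, smaller issue: for $\lambda_*\le 0$ the first moment of ${\rm e}^{-\lambda_* t}\langle g,X_t\rangle$ converges to the strictly positive constant $\langle g,\tilde\varphi\rangle\varphi(r,\upsilon)$, so no first-moment identity can yield almost sure convergence to zero; the correct input there is almost sure extinction of the NBP at and below criticality, which makes the left-hand side eventually zero pathwise.
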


\smallskip

\begin{wrapfigure}{R}{0.5\textwidth}
\vspace{-22pt}
  \begin{center}
\includegraphics[height=7cm]{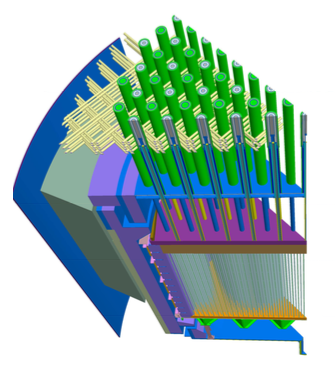}
  \end{center}
  \vspace{-20pt}
\caption{The geometry of a nuclear reactor core representing a physical domain $D$, on to which the different cross-sectional values of  $ \sigma_{\emph{\texttt{s}}},  \sigma_{\emph{\texttt{f}}},  \pi_{\emph{\texttt{s}}},  \pi_{\emph{\texttt{f}}}$ are mapped as numerical values.}
\label{fig}
\end{wrapfigure}
The left-eigenfunction $\tilde\varphi$ is called the {\it importance  map} and offers a quasi-stationary  distribution of radioactive activity (unless $\lambda_*=0$, in which case it is a stationary profile). We shall refer to the right-eigenfunction as the {\it heat map} and gives a principal representation of flux which grows at rate $\lambda_*$.
In
modern nuclear reactor  design and safety regulation, it is usually the case that
virtual reactor models such as the one seen in Figure \ref{fig} 
 are
designed such that $\lambda_* = 0$ (in fact, slightly above zero\footnote{As can be seen from \cite{SNTE-II}, reproduced in Theorem \ref{SLLN} below, at criticality, the underlying process of fission will die out with probability one, despite the semigroup $(\psi_t, t\geq 0)$ having an asymptotic limit.}) and the behaviour of $\tilde\varphi$ and $\varphi$
within spatial domains on the human scale remain within regulated levels.
Existing physics and engineering literature with focus on applications in the nuclear regulation industry, has largely been concerned with different numerical methods for estimating the value of the eigenvalue $\lambda_*$ as well as the eigenfunction $\varphi$ and eigenmeasure $\tilde{\varphi}(r,\upsilon)\d r\d\upsilon$.  Further discussion on this aspect of the NTE can be found in \cite{SNTE, LM, DS, PR, GPS, MT, M-K, PP, Bell, BG, D, DL6}

\smallskip


In this article we look at various existing and new Monte Carlo estimates of the principal quantity  $\lambda_*$, which opens the door to the much more complicated problem of numerically constructing the  eigenfunctions  $\varphi$ and $\tilde\varphi$. In particular, we are interested in how Monte Carlo methods based on the recently introduced so-called 
`{\it many-to-one}' representation of the solution to \eqref{mild} compare to those based on the classical representation of the underlying physical process. We will exploit the probabilistic perspective that has only recently been developed in the accompanying papers \cite{MultiNTE, SNTE, SNTE-II}. In particular, we will use the probabilistic perspective to build up complexity analysis of each of the Monte Carlo methods that we consider; considering the balance of computational cost against accuracy.

\smallskip

The remainder of the paper will proceed as follows. In Section \ref{MCsect} we show how classical methods use the NBP representation  to simulate the lead eigenvalue $\lambda_*$, as well as the new NRW representation of $(\psi_t, t\geq 0)$. In Sections \ref{complexitySection} and \ref{NRWMCCA}, we provide a complexity analysis of the two respective methods, which we believe does not currently exist in the literature. From here, a mixture of conclusions can be drawn concerning the relative benefits of the two methods. In Section \ref{importance} we show how the NRW Monte Carlo method can be adapted using a method of Doob $h$-transforms, which is a stochastic process version of the method of importance sampling for random variables. This leads to the so-called $h$-NRW. In Section \ref{h-complexity} we consider the complexity of the natural Monte Carlo method associated with the $h$-NRW. Additionally we introduce a candidate class of functions to play the role of $h$, called {\it Urts} functions. Finally, in Section \ref{discussion} we discuss the relative benefits of each method and look at some numerical experiments. Almost all of the proofs are deferred to the Appendix.

\section{Basic Monte Carlo}\label{MCsect}
In Appendix A we outline two algorithmic approaches (Algorithms \ref{A1} and \ref{A2}) for the Monte Carlo simulation of an $\alpha\pi$-NRW and a $(\sigma_{\texttt s}, \pi_{\texttt s}, \sigma_{\texttt f}, \pi_{\texttt f})$-NBP, which were denoted $((\texttt{r}_t, \texttt{v}_t), t\leq \texttt{t}_{\rm end})$ and $(\mathcal{X}_t, t\geq 0)$, respectively. With either of these simulators in hand, it is possible to build simple statistical estimators of the eigen-triple $(\lambda_*, \varphi, \tilde\varphi)$ based on certain easy limiting procedures that can be observed from Theorem \ref{CVtheorem}. For practical reasons noted below, we will in fact focus  on estimation of $\me^{\lambda_{\ast}}$, rather than $\lambda_{\ast}$ directly.
\smallskip

Below we introduce such statistical estimators. Notationally we use $\Psi_k[g](t, r, \upsilon)$ to mean one of two statistics that can be built from either a NBP or NRW simulator, where $t\geq0$, $r\in D$ and $\upsilon\in V$. In the case of NBP, we understand that either 
\begin{equation}
\Psi_k[g](t, r, \upsilon) = \Psi^{\texttt{br}}_k[g](t, r, \upsilon) = \frac{1}{k}\sum_{i= 1}^k \langle g,  \mathcal{X}^i_t\rangle,
\label{def: Psi_br}
\end{equation}
where $\mathcal{X}^i$ are independent simulations of the NBP described in Algorithm \ref{A2}, or 
\begin{equation}
\Psi_k[g](t, r, \upsilon) = \Psi^{\texttt{rw}}_k[g](t, r, \upsilon) =\frac{1}{k}\sum_{i= 1}^k  {\rm e}^{\int_0^t\beta(\texttt{r}_s^i, \texttt{v}_s^i)\D s}g(\texttt{r}_t^i, \texttt{v}_t^i) \mathbf{1}_{(t <\texttt{t}_{\rm end}^i)},
\label{Psirw}
\end{equation}
where $((\texttt{r}_t^i, \texttt{v}_t^i), t\leq \texttt{t}_{\rm end}^i)$ are independent simulations of the $\alpha\pi$-NRW described in Algorithm \ref{A1}. Note that the integral $\int_0^t\beta(\texttt{r}_s^i, \texttt{v}_s^i)\D s$ must be computed along simulated paths. In general applications, this might be carried out using standard numerical integration routines, although in nuclear applications, the function $\beta$ will typically be piecewise constant, and so it can be feasible to use an exact computation methods by breaking the path into lengths along which $\beta$ is constant.

\subsection{Estimating the lead eigenvalue}

Taking account of Theorem \ref{CVtheorem}, Algorithm \ref{A1} can be used to make the following estimates of ${\lambda_*}$, $\varphi$ and $\tilde\varphi$, we have in both the NBP and NRW setting that\footnote{In the case where $\lambda_{\ast}$ is estimated directly, the corresponding estimate would be
  \begin{equation*}
    \lim_{t\to\infty}\lim_{k\to\infty}\frac{1}{t}\log
\Psi_k[g](t, r, \upsilon)
 = {\lambda_*},
  \end{equation*}
  however this estimate has the undesirable numerical property that for large $k,t$, there is a positive probability that $\Psi_k[g](t, r, \upsilon) = 0$, and this will cause problems when taking logarithms.}
\begin{equation}
\label{lambda}
\lim_{t\to\infty}\lim_{k\to\infty}\left(
\Psi_k[g](t, r, \upsilon)\right)^{\frac{1}{t}}
 = \me^{\lambda_*},
\end{equation}
where the limit holds for any $(r,\upsilon)\in{D}\times V$ and bounded measurable $g: D\times {V} \to [0,\infty)$ such that $\langle g, \tilde\varphi\rangle>0$.  Moreover, in the branching setting, if it is {\it a priori} known that $\lambda_*>0$, then thanks to Theorem \ref{SLLN}, if we take limits in $t$ first, there is no need to involve cycles in the Monte Carlo approach since (conditional on survival of the branching process, which is an event of positive probability)
\begin{equation}
\label{lambda2}
\lim_{t\to\infty}\left( \Psi_k[g](t, r, \upsilon)\right)^{\frac{1}{t}} = \me^{\lambda_*},
\end{equation}
almost surely. Either way, we can base a Monte Carlo approach on the fact that 
\[
\left( \Psi_k[g](t, r, \upsilon)\right)^{\frac{1}{t}} \approx \me^{\lambda_*}
\]
 for $k,t$ sufficiently large and an arbitrary initial configuration $r\in D$, $\upsilon\in V$.

\subsection{Estimating the lead eigenfunctions}

Again, thanks to Theorem \ref{CVtheorem} and Algorithm \ref{A1}, for $(r,\upsilon)$, $(r_0,\upsilon_0)\in D\times V$, in both the NBP and NRW setting we have 
\[
\lim_{t\to\infty}\lim_{k\to\infty}\frac{\Psi_k[g](t, r, \upsilon)}{\Psi_k[g](t, r_0, \upsilon_0)}=\frac{\varphi(r,\upsilon)}{\varphi(r_0,\upsilon_0) },
\]
thereby giving  an estimate for $\varphi$ at $(r,\upsilon)$ up to a multiplicative constant (consistent for all $(r,\upsilon)$ by keeping  $(r_0, \upsilon_0)$ fixed). \smallskip

Finally, for fixed $(r,\upsilon)\in  D\times V$ and bounded measurable  $g: D\times {V} \to [0,\infty)$, if we assume without loss of generality that $\langle\tilde\varphi,1 \rangle=1$, then 
\begin{equation}
\label{tildephi}
\lim_{t\to\infty}\lim_{k\to\infty}\frac{\Psi_k[g](t, r, \upsilon)}{\Psi_k[1](t, r,\upsilon)} = \langle g, \tilde{\varphi}\rangle ,
\end{equation}
for $t$ and $k$ sufficiently large, which gives an estimate for $\tilde\varphi$ at $(r,\upsilon)$ up to a multiplicative constant.
In the setting of the NBP we can again take advantage of Theorem \ref{SLLN} and note that, when $\lambda_*>0$ and extinction does not occur, 
\begin{equation}
\label{tildephibr}
\lim_{t\to\infty}\frac{\Psi^{\texttt{br}}_1[g](t, r, \upsilon)}{\Psi^{\texttt{br}}_1[1](t, r,\upsilon)} = \langle g, \tilde{\varphi}\rangle .
\end{equation}

\subsection{Occupation measure eigenfunction estimates at criticality} \label{sec:occupation}

In the setting that it is {\it a priori} known that $\lambda_*=0$, another approach to estimating $\varphi$, and more importantly $\tilde\varphi$, involves the use of an occupation measure. 
Note that, on the one hand, for $g\in L_\infty^+(D\times V)$, thanks to Theorem \ref{CVtheorem}, it is easy to show that 
\[
\lim_{t\to\infty}\frac{1}{t}\int_0^t \psi_s[g](r,\upsilon)\d s 
= \langle \tilde\varphi ,g \rangle \varphi(r,\upsilon).  
\]
This implies that  one can build an alternative estimate of $\tilde\varphi$ by monitoring the empirical occupation of simulations. Specifically, fixing $r_0\in D$, $\upsilon_0\in V$ and $g\in L^+_\infty(D\times V)$,  
\begin{equation}
\lim_{t\to\infty}\lim_{k\to\infty}\lim_{M\to\infty}\frac{1}{tM}\sum_{m = 1}^M \Psi_k[g](m t/M, r_0,\upsilon_0)
=\langle\tilde\varphi, g\rangle\varphi(r_0,\upsilon_0).
\label{lefteigenestimate}
\end{equation}

The reader will note that, if we take for example $g(r,\upsilon) = g_{D_0,V_0}(r,\upsilon) = \mathbf{1}_{(r\in D_0, \upsilon\in V_0)}$, where $D_0$ is a small region of $D$ and $V_0$ a small region of $V$,  the quantity 
\[
\frac{1}{tM}\sum_{m = 1}^M \Psi_k[g_{D_0,V_0}]({m t/M}, r,\upsilon)
\]
 (for sufficiently large $M, k, t$) provides nothing more than a normalised histogram of occupation of the path of the NBP or NRW. 

\smallskip

Strictly speaking, a fresh set of Monte Carlo cycles needs to be performed for each `pixel pair' $(D_0,V_0)$. Taking a histogram of occupation across all pixel pairs means that the Monte Carlo simulations on each pixel pair site are highly corollated. For example, in the setting of the NRW, high occupancy in one region of $D\times V$ will necessarily imply lower occupancy in other regions of $D\times V$. Hence, the natural estimator induced by \eqref{lefteigenestimate} has some immediate deficiencies. On the other hand, for the NBP, although the occupancies of individual pixel pairs are highly corollated, the phenomenon of branching ensures that the the underling stochastic process explores the space $D\times V$ relatively well over multiple Monte Carlo cycles. 

\smallskip

Figure \ref{histogram} gives an example of the occupation histogram given by $g(r,\upsilon) = \mathbf{1}_{(r\in D_0, \upsilon \in V)}$ where $D_0$ varies over small subsets of $D$. In this example, $D$ is a square, with 4 circular `fuel rods' where the fissile rates are substantially higher than in the surrounding domain. We also give a second example with $g(r,\upsilon) = \mathbf{1}_{(r\in D_0, \upsilon\in V')}$, where $D_0$ varies over small subsets of $D$ but $V'\subset V$ is a subset of velocities of $V$ (hence the apparent smear of `heat' in a particular direction in the figure).

\begin{figure}[h!]
\includegraphics[height= 5cm, width=5cm]{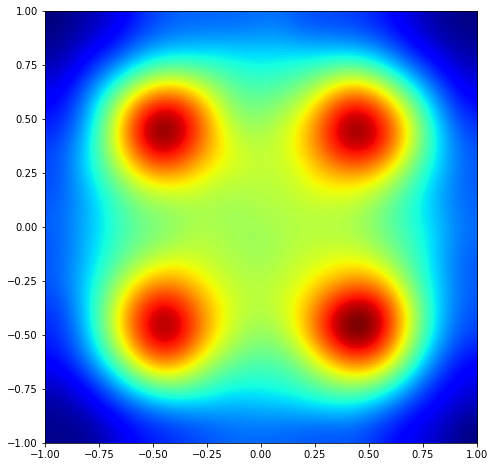}
\includegraphics[height= 5cm, width=5cm]{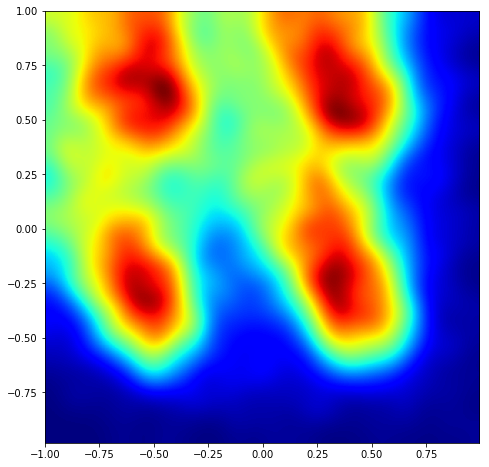}
\includegraphics[height= 5cm, width=5cm]{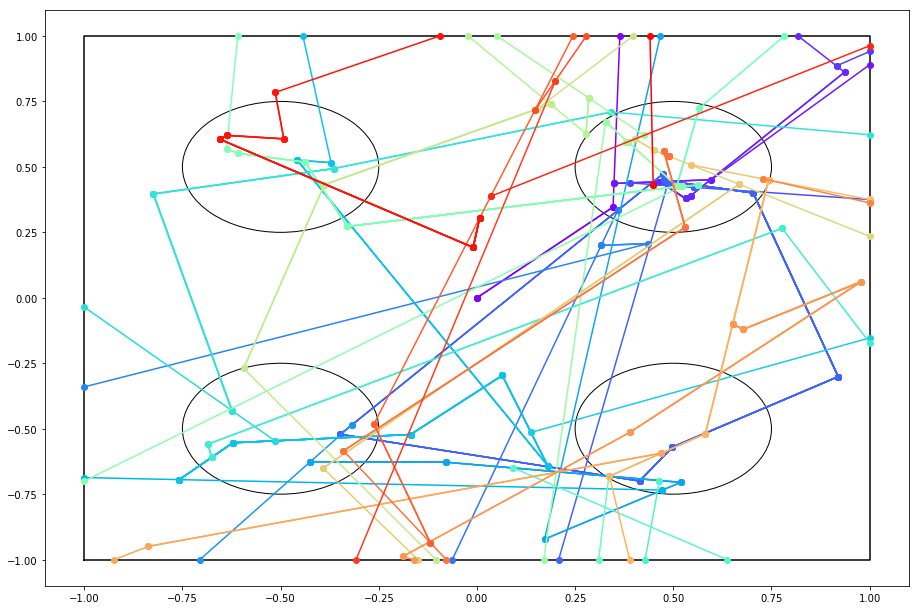}
\caption{\rm Estimates of $\tilde\varphi(r,V): = \int_V \tilde\varphi(r,\upsilon')\d\upsilon'$  represented as a `heat map' (left), $\tilde\varphi(r,V'): = \int_{V'} \tilde\varphi(r,\upsilon')\d\upsilon'$ for $V'\subseteq V$  represented as a `heat map' (middle) and some of the contributing underlying branching neutron particles (right). The domain $D$ is rectangular and the cross sections are constant except on four circular domains, where fissile rates are greater.}
\label{histogram}
\end{figure}

\section{Complexity Analysis  of Basic NBP Monte Carlo}\label{complexitySection}
We turn our attention now to analysing the complexity of some of the algorithms presented in the previous section in the setting that the chosen stochastic process is the physical NBP. Our principal focus will be on the first one, which estimates $\lambda_*$.
Our interest lies in the rate of convergence of the Monte Carlo estimates from the previous section and the computational cost associated therewith.

We start by looking at the mean-squared convergence of the Monte Carlo estimator suggested by  \eqref{lambda}. Its proof is given in Appendix D.
\begin{theo}[NBP Monte Carlo convergence for $\lambda_*$]
\label{cor:var}
There exist constants $\kappa_{[i]}:=\kappa_{[i]}(g, r, \upsilon)$ such that the following estimates hold for all $t>0$ and $k\ge 1$:
\begin{enumerate}[(i)]
\item If $\lambda_{\ast}=0$, then
\[
\mathbb{E}_{\delta_{(r,\upsilon)}} \Big[\Big(\left(\Psi^{\texttt{\emph{br}}}_k[g](t, r, \upsilon)\right)^{1/t}-\me^{\lambda_{\ast}}\Big)^{2}\Big]\le \frac{\kappa_{[1]} t}{k} +  \frac{\kappa_{[0]}}{t^{2}}\,.
\]
\item
If $\lambda_{\ast}> 0$, then 
\[
\mathbb{E}_{\delta_{(r,\upsilon)}} \Big[\Big(\left(\Psi^{\texttt{\emph{br}}}_k[g](t, r, \upsilon)\right)^{1/t}-\me^{\lambda_{\ast}}\Big)^{2}\Big]\le \frac{\kappa_{[2]}}{k}+\frac{\kappa_{[0]}}{t^2}\,.
\]
\item
If $\lambda_{\ast} < 0$, then
\[
\mathbb{E}_{\delta_{(r,\upsilon)}}\Big[\Big(\left(\Psi^{\texttt{\emph{br}}}_k[g](t, r, \upsilon)\right)^{1/t}-\me^{\lambda_{\ast}}\Big)^{2}\Big]\le  \frac{\kappa_{[3]} \me^{-\lambda_{\ast}t}}{k}+\frac{\kappa_{[0]}}{t^{2}}\,.
\]
\end{enumerate}
Let $C_{[0]}:= \langle \tilde \varphi, g\rangle \varphi(r,\upsilon)$, and $C_{[i]}$ are given in Lemma~\ref{lem:NBPConv}. If $C_{[0]} \neq 1$, the inequalities hold for all $t,k$ sufficiently large when the constants are chosen, for any $\eta>0$, to be:
\begin{align*}
  \kappa_{[0]} & := 2 \left[\ln\left(C_{[0]}\right) \me^{\lambda_\ast}\right]^2 + \eta  \\
  \kappa_{[i]} & := 2 C_{[i]} C_{[0]}^{-2} + \eta  \qquad \qquad \qquad  i = 1,2,3.
\end{align*}
In particular, this implies that for each $(r, \upsilon)$ and $g\in L^+_{\infty}(D\times V)$, there exists a sequence $k(t)$ increasing in $t$ such that $k(t) \to \infty$ and such that
\begin{equation}
\label{cv-L2}
\lim_{t\to\infty} \mathbb{E}_{\delta_{(r,\upsilon)}} \Big[\Big(\left(\Psi^{\texttt{\emph{br}}}_{k(t)}[g](t, r, \upsilon)\right)^{1/t}-\me^{\lambda_{\ast}}\Big)^{2}\Big] = 0 
\end{equation}
and the Monte Carlo estimator suggested by \eqref{lambda} is asymptotically unbiased with mean-squared convergence.
\end{theo}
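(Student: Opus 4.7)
The plan is a bias--variance decomposition followed by a Jensen-type control of the non-Lipschitz map $x\mapsto x^{1/t}$. Writing $\overline{\Psi}_t := \mathbb{E}_{\delta_{(r,\upsilon)}}[\Psi^{\texttt{br}}_{k}[g](t, r, \upsilon)] = \psi_t[g](r,\upsilon)$ (by linearity of expectation and Lemma~\ref{NBPrep}), the starting point is
\[
\mathbb{E}_{\delta_{(r,\upsilon)}}\!\left[\big(\Psi^{\texttt{br}}_k[g]^{1/t} - \me^{\lambda_*}\big)^{2}\right] \le 2\big(\overline{\Psi}_t^{\,1/t} - \me^{\lambda_*}\big)^{2} + 2\,\mathbb{E}_{\delta_{(r,\upsilon)}}\!\left[\big(\Psi^{\texttt{br}}_k[g]^{1/t} - \overline{\Psi}_t^{\,1/t}\big)^{2}\right].
\]
Both pieces are then treated with Theorem~\ref{CVtheorem}, which pins down the large-$t$ behaviour of $\psi_t[g]$, together with the three-regime second-moment estimates of Lemma~\ref{lem:NBPConv}.

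For the squared bias, Theorem~\ref{CVtheorem} gives $\overline{\Psi}_t = C_{[0]}\me^{\lambda_* t}(1+O(\me^{-\varepsilon t}))$. Raising to the $1/t$ power and Taylor-expanding $C_{[0]}^{1/t} = 1 + t^{-1}\ln C_{[0]} + O(t^{-2})$ yields
\[
\overline{\Psi}_t^{\,1/t} - \me^{\lambda_*} = \me^{\lambda_*}\frac{\ln C_{[0]}}{t} + o(t^{-1}),
\]
so that the squared bias is at most $[\me^{\lambda_*}\ln C_{[0]}]^{2}/t^{2}$ up to an $o(t^{-2})$ slack; the non-degeneracy $C_{[0]}\neq 1$ merely ensures the leading term is non-trivial, and the slack is absorbed by the $\eta$ in $\kappa_{[0]}=2[\me^{\lambda_*}\ln C_{[0]}]^{2}+\eta$ once $t$ is sufficiently large.

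The variance term is the main obstacle, since $x\mapsto x^{1/t}$ is not globally Lipschitz and its derivative blows up near zero. My strategy is to renormalise by $\overline{\Psi}_t$: put $Z_k := \Psi^{\texttt{br}}_k[g]/\overline{\Psi}_t$, so that $\mathbb{E}[Z_k]=1$ and $\mathrm{Var}(Z_k) = \mathrm{Var}(\Psi^{\texttt{br}}_k[g])/\overline{\Psi}_t^{\,2}$, and then exploit the pointwise bound $|z^{1/t}-1|\le |z-1|$ that is valid for all $z\ge 0$ and $t\ge 1$ (it holds on $[0,1]$ because $z\le z^{1/t}\le 1$ and on $[1,\infty)$ because $1\le z^{1/t}\le z$). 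Squaring and taking expectation gives $\mathbb{E}[(Z_k^{1/t}-1)^{2}]\le \mathrm{Var}(Z_k)$, whence
\[
\mathbb{E}_{\delta_{(r,\upsilon)}}\!\left[\big(\Psi^{\texttt{br}}_k[g]^{1/t} - \overline{\Psi}_t^{\,1/t}\big)^{2}\right] = \overline{\Psi}_t^{\,2/t}\,\mathbb{E}\!\left[\big(Z_k^{1/t}-1\big)^{2}\right] \le \overline{\Psi}_t^{\,2/t}\,\frac{\mathrm{Var}(\Psi^{\texttt{br}}_k[g])}{\overline{\Psi}_t^{\,2}}.
\]
Since the $k$ neutron branching processes are i.i.d.\ one has $\mathrm{Var}(\Psi^{\texttt{br}}_k[g]) = \mathrm{Var}(\langle g,\mathcal{X}_t\rangle)/k$; inserting the three asymptotics of Lemma~\ref{lem:NBPConv} (of respective orders $C_{[1]}\,t$, $C_{[2]}\,\me^{2\lambda_* t}$ and $C_{[3]}\,\me^{\lambda_* t}$ according to the sign of $\lambda_*$), together with $\overline{\Psi}_t^{\,2}\sim C_{[0]}^{2}\me^{2\lambda_* t}$ and $\overline{\Psi}_t^{\,2/t}\to \me^{2\lambda_*}$, produces the three stated bounds with leading constants $2 C_{[i]}/C_{[0]}^{2}$; all remaining sub-leading multiplicative corrections are absorbed into the $\eta$ in each $\kappa_{[i]}$ once $t$ and $k$ are large enough.

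Finally, the convergence statement \eqref{cv-L2} follows by choosing, in each regime, a schedule $k(t)\to\infty$ that dominates the $t$-dependent factor in the variance bound --- for instance $k(t)=\lceil t^{3}\rceil$ in case (i), $k(t)=\lceil t^{2}\rceil$ in case (ii) and $k(t)=\lceil \me^{-\lambda_* t} t^{2}\rceil$ in case (iii) --- which combined with the $O(t^{-2})$ squared bias drives the total mean-squared error to zero.
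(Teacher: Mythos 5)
Your argument is essentially the paper's own: the paper also splits via $(a+b)^2 \le 2a^2 + 2b^2$ into a squared bias term (handled with $\eqref{spectralexpsgp}$ and a $1/t$ expansion, giving $\kappa_{[0]}$) and a variance term controlled by Lemma~\ref{lem:NBPConv}. The only cosmetic difference is in how each handles the non-Lipschitz map $x \mapsto x^{1/t}$: the paper proves the concavity estimate $(h_t(x)-h_t(x_0))^2 \le (x-x_0)^2\, x_0^{2/t-2}\max\{1,1/t\}^2$ directly, whereas you renormalise by $\overline{\Psi}_t$ and use $\lvert z^{1/t}-1\rvert \le \lvert z-1\rvert$ for $t\ge 1$; expanding $Z_k = \Psi^{\texttt{br}}_k[g]/\overline{\Psi}_t$ shows the two bounds coincide exactly on $t\ge 1$, and the paper's $\max\{1,1/t\}$ factor merely extends the inequality to $t<1$, which is irrelevant for the explicit-constant regime. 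One thing worth flagging: you take Lemma~\ref{lem:NBPConv} as a black box, which is fine given that the theorem statement names it, but in the paper that lemma is proved inside the proof of the theorem (via the many-to-two formula of Lemma~\ref{M2F}), so a fully self-contained write-up along your lines would still need to supply that argument.
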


The above estimates suggest that in the critical and sub-critical cases there is a trade-off between increasing $t$ to reduce the second term on the right-hand side of the inequality, and an increasing effect on the first term. By also increasing $k$, it is possible to make the error sufficiently small. 
\smallskip

We also want to understand the required computational cost for the associated degree of accuracy in the preceding theorem.  To compute the cost of the simulation, we need an indexation for all the particles which have been present in the system.

Suppose that $\mathcal U$ is a labelling system that uniquely identifies (or `names') neutrons that ever come into existence in the NBP (for example one could use e.g. Ulam Harris labelling, cf. \cite{LeGall}).
For each $u\in \mathcal U$, write $(r^{(u)}_t, \upsilon^{(u)}_t)_{b_u\le t<d_u}$ for the path of the particle $u$, where $0\le b_u<d_u\le \infty$ stand for the birth and death times of $u$. Recall that $r^{(u)}$ is piecewise linear; write $b_u= t^{(u)}_0 < t^{(u)}_1 < \cdots $ for the jump times of $\upsilon^{(u)}_t$ (the moments when scatterings occur). 
For $f, g\in L^{+}_{\infty}(D\times V)$, define the cost per simulation by
\[
C_t[f,g]:=\sum_{u\in \mathcal U}g\Big(r^{(u)}_{b_u},  \upsilon^{(u)}_{b_u}\Big) \un_{(b_u\le t)}+ \sum_{u\in \mathcal U}\sum_{i\ge 1}f\Big(r^{(u)}_{t^{(u)}_i},  \upsilon^{(u)}_{t^{(u)}_i}\Big)\un_{(b_{u}\le t, t^{(u)}_i\le t)},
\]
for $t\geq 0.$
\smallskip

The major part of the MC simulation suggested by \eqref{lambda} consists of simulating the variables relevant to the scattering and fission events. The  random functional $(C_t[f,g], t\geq0)$, with an appropriate choice of $f, g$, can be then used to evaluate the cost (if we assume the computational effort of simulating any random variable is counted as one unit) of a single simulation with a time horizon $t$. For instance, if we take $f=\mathbf 1$ and $g=\mathbf 0$, then $C_t[\mathbf 1, \mathbf 0]$ counts the total number of scattering events up to time $t$, which can be used as a rough estimate of the CPU-time of the program; on the other hand, taking $f=\mathbf 0, g=\mathbf 1$ yields an estimate for the memory cost as $C_t[\mathbf 0, \mathbf 1]$ counts the number of particles that have appeared before $t$.  
\smallskip

The process $(C_{t}[f,g], t\geq 0)$ is a pure jump increasing process whose Doob--Meyer decomposition can be written as $C[f,g]=A+M$, where $M$ is a martingale with respect to the natural filtration of $X$ and $A$ is the bounded variation compensator of $C[f,g]$. The latter can be easily computed noting that rates at which scattering and fission occurs are given by the cross sections, it is straightforward to derive 
\begin{equation}\label{eq:rate}
A_t = \int_0^t \langle \sigma_{\texttt{s}}\pi_{\texttt{s}}[f]+\sigma_{\texttt{f}}\pi_{\texttt{f}}[g], X_{s}\rangle \d s, \qquad t\geq 0,
\end{equation}
where for $(r, \upsilon)\in D\times V$ and $h\in L^+_\infty(D\times V)$, 
\begin{equation}
\pi_{\texttt{s}}[h](r, \upsilon)=\int_V h(r, \upsilon')\pi_{\texttt{s}}(r, \upsilon, \upsilon')\d \upsilon' \quad \text{and} \quad \pi_{\texttt{f}}[h](r, \upsilon)=\int_V h(r, \upsilon')\pi_{\texttt{f}}(r, \upsilon, \upsilon')\d \upsilon'. 
\label{pih}
\end{equation}
Such computations are not unfamiliar in the setting of branching Markov processes, superprocesses or fragmentation processes. See for example \cite{E, Li, Bertbook2}. We are led very easily to the following estimate of simulation cost per sample as a simple application of Theorem \ref{CVtheorem}.

Doob's Maximal inequality for the representation $C[f,g] = M+A$ also gives, for example,
\begin{equation}\label{eq: qv}
\mathbb E_{\delta_{(r, \upsilon)}}\big[\sup_{s\le t}\big(C_\emph{s}[f, g]-A_s\big)^{2}\big]   \le 4\int_0^t \psi_s[\sigma_{\texttt{\emph{s}}} \pi_{\texttt{\emph{s}}}[f^2]+\sigma_{\texttt{{f}}} \pi_{\texttt{{f}}}[g^2]](r, \upsilon)\d s, \qquad t\geq 0.
\end{equation}
This shows that the mean squared error of the growth of the cost function $C[f,g]$ relative to the compensator $A$  mimics the growth of 
\[
\mathbb E_{\delta_{(r, \upsilon)}}\big[C_{t}[f, g]\big] = \mathbb E_{\delta_{(r, \upsilon)}}[A_t] =\int_0^t \psi_s[\sigma_{\texttt{\emph{s}}} \pi_{\texttt{\emph{s}}}[f]+\sigma_{\texttt{{f}}} \pi_{\texttt{{f}}}[g]](r, \upsilon)\d s,\qquad t\geq 0.
\]
 \begin{lem}[NBP Expected simulation cost per sample]\label{thm: cost}
For $f, g\in L^+_\infty(D\times V)$
we have the following.
\begin{enumerate}[(i)]
\item
If $\lambda_\ast = 0$, then, as $t\to\infty$,
\begin{equation}\label{eq: cost_asy_cr}
\mathbb E_{\delta_{(r, \upsilon)}}\big[C_{t}[f, g]\big] {\sim} \langle \sigma_{\texttt{\emph{s}}} \pi_{\texttt{\emph{s}}}[f]+\sigma_{\texttt{\emph{f}}} \pi_{\texttt{\emph{f}}}[g], \,\tilde\varphi\rangle\varphi(r, \upsilon)   t =: \kappa_{[4]} t\,,
\end{equation}
\item
If $\lambda_\ast > 0$, then, as $t\to\infty$,
\begin{equation}\label{eq: cost_asy_sur}
\mathbb E_{\delta_{(r, \upsilon)}}\big[C_{t}[f, g]\big] {\sim} \langle \sigma_{\texttt{\emph{s}}} \pi_{\texttt{\emph{s}}}[f]+\sigma_{\texttt{\emph{f}}} \pi_{\texttt{\emph{f}}}[g], \,\tilde\varphi\rangle\varphi(r, \upsilon)   \frac{\me^{\lambda_{\ast}t}}{\lambda_{\ast}} = \kappa_{[4]} \frac{\me^{\lambda_{\ast}t}}{\lambda_{\ast}} \,,
\end{equation}
\item
If $\lambda_\ast < 0$, then, as $t\to\infty$, $\mathbb E_{\delta_{(r, \upsilon)}}\big[C_{t}[f, g]\big]\nearrow \mathbb E_{\delta_{(r, \upsilon)}}\big[C_{\infty}[f, g]\big] =: \kappa_{[5]}<\infty$. 
\end{enumerate}
\end{lem}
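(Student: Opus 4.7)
My plan starts from the identity already noted just before the lemma,
\[
  \mathbb E_{\delta_{(r,\upsilon)}}[C_t[f,g]] = \int_0^t \psi_s[h](r,\upsilon)\,\d s, \qquad h := \sigma_{\texttt{s}}\pi_{\texttt{s}}[f] + \sigma_{\texttt{f}}\pi_{\texttt{f}}[g],
\]
so the lemma reduces to a purely analytic asymptotic statement about $\int_0^t \psi_s[h]\,\d s$. Note that, by (H1) together with $f,g\in L^+_\infty(D\times V)$, the function $h$ lies in $L^+_\infty(D\times V)$, so Theorem~\ref{CVtheorem} applies to $\psi_s[h]$.

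The core of the argument is the spectral estimate \eqref{spectralexpsgp}. Applying it to $h/\norm{h}_\infty$ (and then multiplying by $\norm{h}_\infty$) gives a uniform-in-$(r,\upsilon)$ bound
\[
  \psi_s[h](r,\upsilon) = \me^{\lambda_* s}\varphi(r,\upsilon)\langle\tilde\varphi, h\rangle + \varphi(r,\upsilon)\, E(s,r,\upsilon),
\]
where $|E(s,r,\upsilon)| \le K\me^{(\lambda_* - \varepsilon)s}$ for some constant $K$ depending on $\norm{h}_\infty$ and the constants from \eqref{spectralexpsgp}, and for all $s$ large enough. On any bounded interval $[0,s_0]$, $\psi_s[h]$ is itself uniformly bounded (by Lemma~\ref{NBPrep}), so the contribution to the integral from $[0,s_0]$ is a harmless constant that will be absorbed into the remainder in each case below.

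Now I split into the three cases. When $\lambda_*=0$, the main term $\varphi(r,\upsilon)\langle\tilde\varphi,h\rangle$ is a constant in $s$, while the remainder decays like $\me^{-\varepsilon s}$ and is therefore integrable; dividing by $t$ and sending $t\to\infty$ yields $\mathbb E_{\delta_{(r,\upsilon)}}[C_t[f,g]]/t \to \varphi(r,\upsilon)\langle\tilde\varphi,h\rangle = \kappa_{[4]}$, which is \eqref{eq: cost_asy_cr}. When $\lambda_*>0$, the leading term integrates to $\varphi(r,\upsilon)\langle\tilde\varphi,h\rangle (\me^{\lambda_* t}-1)/\lambda_*$, while the remainder integrates to at most a constant multiple of $\me^{(\lambda_*-\varepsilon) t}$, which is of strictly smaller exponential order; dividing by $\me^{\lambda_* t}/\lambda_*$ and taking $t\to\infty$ gives \eqref{eq: cost_asy_sur}. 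When $\lambda_*<0$, both the leading term and the error bound are integrable over $[0,\infty)$, so by monotone convergence (the integrand is non-negative) $\mathbb E_{\delta_{(r,\upsilon)}}[C_t[f,g]]$ increases to the finite limit $\int_0^\infty \psi_s[h](r,\upsilon)\,\d s =: \kappa_{[5]}$.

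There is essentially no hard step here: once one identifies $h$ and feeds it into the uniform spectral expansion of Theorem~\ref{CVtheorem}, each of the three asymptotics is a one-line integral computation. The only place that requires any care is ensuring the $L^\infty$-normalisation of $h$ when invoking \eqref{spectralexpsgp}, and making sure the boundary contribution from small $s$ (where the spectral bound has not yet taken effect) is negligible; both are handled by the uniform-in-$t$ bound from Lemma~\ref{NBPrep}.
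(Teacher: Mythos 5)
Your proposal is correct and follows exactly the route the paper intends: the paper reduces $\mathbb E_{\delta_{(r,\upsilon)}}[C_t[f,g]]$ to $\int_0^t \psi_s[\sigma_{\texttt{s}}\pi_{\texttt{s}}[f]+\sigma_{\texttt{f}}\pi_{\texttt{f}}[g]](r,\upsilon)\,\d s$ via the compensator identity \eqref{eq:rate} and then declares the lemma ``a simple application of Theorem~\ref{CVtheorem}''. Your write-up simply fills in the three one-line integral computations that the paper leaves implicit, and does so correctly.
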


\medskip
Of course, if we run $k$ cycles of Algorithm \ref{A2}, the cost of these cycles is then a $k$-multiple of that of a single cycle as stipulated in Lemma~\ref{thm: cost}. It is also worth noting that both $ \sigma_{\texttt{\emph{s}}} \pi_{\texttt{\emph{s}}}[f]+\sigma_{\texttt{\emph{f}}} \pi_{\texttt{\emph{f}}}[g]<\infty$ and $ \sigma_{\texttt{\emph{s}}} \pi_{\texttt{\emph{s}}}[f^2]+\sigma_{\texttt{\emph{f}}} \pi_{\texttt{\emph{f}}}[g^2]<\infty$, for $f, g\in L^+_\infty(D\times V)$.

\medskip
We conclude with the following consequence of Theorem~\ref{cor:var} and Lemma~\ref{thm: cost}. Specifically, we want to identify the optimal choice of $k,t$ to minimise total computational cost for  a given level of accuracy. To this end,  we define 
\[
\texttt{Cost}(k, t) = k\times\sup_{f, g\in L_\infty^+(D\times V): \norm{f}_\infty, \norm{g}_\infty\leq 1}C_t[f, g]
\]
\begin{theo}[NBP Monte Carlo complexity] \label{thm:NBPComplexity}
  There is a choice of $k,t$ such that
  \begin{equation} \label{eq:lesseps}
    \mathbb{E}_{\delta_{(r,\upsilon)}} \Big[\Big(\left(\Psi^{\texttt{\emph{br}}}_k[g](t, r, \upsilon) \right)^{1/t}- \me^{\lambda_{\ast}}\Big)^{2}\Big]\le \varepsilon^2
  \end{equation}
  with the following total simulation cost:
  \begin{enumerate}[(i)]
  \item If $\lambda_{\ast} = 0$ then \eqref{eq:lesseps} holds in the limit as $\varepsilon \to 0$ with
    \begin{equation*}
      \mathbb E_{\delta_{(r, \upsilon)}}\big[\emph{\texttt{Cost}}(k, t)\big]  \le C \varepsilon^{-4},
    \end{equation*}
    for some $C$, and the optimal choice of $k,t$ are given by $t = \sqrt{2\kappa_{[0]}} \varepsilon^{-1}$ and $k = \kappa_{[0]} \kappa_{[1]}^{-1}t^3$.
  \item If $\lambda_{\ast} > 0$ then \eqref{eq:lesseps} holds in the limit as $\varepsilon \to 0$ with
    \begin{equation*}
      \mathbb E_{\delta_{(r, \upsilon)}}\big[\emph{\texttt{Cost}}(k,
      t)\big]  \le C \exp\left(\lambda_{\ast} \sqrt{\kappa_{[0]}} \varepsilon^{-1}\right),
    \end{equation*}
    for some $C$, and the optimal choice of $k,t$ corresponds to $t \approx \sqrt{\kappa_{[0]}} \varepsilon^{-1}$ and $k \approx  \lambda_{\ast} \kappa_{[0]}^{1/2}\kappa_{[2]} \varepsilon^{-3} /2$.
  \item If $\lambda_{\ast} < 0$ then \eqref{eq:lesseps} holds in the limit as $\varepsilon \to 0$ with
    \begin{equation*}
      \mathbb E_{\delta_{(r, \upsilon)}}\big[\emph{\texttt{Cost}}(k, t)\big]  \le C \exp\left(|\lambda_{\ast}| \sqrt{\kp{0}} \varepsilon^{-1}\right),
    \end{equation*}
    for some $C$, and the optimal choice of $k,t$ corresponds to $t \approx \sqrt{\kp{0}} \varepsilon^{-1}$ and $k \approx |\lambda_{\ast}| \kappa_{[0]}^{1/2}\kappa_{[3]} \varepsilon^{-3} \exp(|\lambda_{\ast}| t)/2$.
  \end{enumerate}
\end{theo}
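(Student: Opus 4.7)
The plan is to combine the mean-squared error bounds of Theorem~\ref{cor:var} with the per-sample cost estimates of Lemma~\ref{thm: cost}, and then to minimise the total expected cost subject to the constraint that the MSE is at most $\varepsilon^{2}$.

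First I would reduce the supremum in the definition of $\texttt{Cost}$. Since $C_t[f,g]$ is monotone in $f,g\ge 0$, the supremum over $\|f\|_\infty, \|g\|_\infty \le 1$ is attained at $f=g=\mathbf 1$, so $\mathbb{E}_{\delta_{(r,\upsilon)}}[\texttt{Cost}(k,t)] = k\,\mathbb{E}_{\delta_{(r,\upsilon)}}[C_t[\mathbf 1,\mathbf 1]]$, which Lemma~\ref{thm: cost} controls asymptotically as $\kp{4}t$, $\kp{4}\me^{\lambda_{*}t}/\lambda_{*}$, or $\kp{5}$ in the three regimes. In each regime, Theorem~\ref{cor:var} bounds the MSE by $A_i(t)/k + \kp{0}/t^{2}$, with $A_1(t) = \kp{1} t$, $A_2(t) = \kp{2}$ and $A_3(t) = \kp{3}\me^{-\lambda_{*} t}$. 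I split the MSE budget by writing $\kp{0}/t^{2} = (1-\alpha)\varepsilon^{2}$ and $A_i(t)/k = \alpha\varepsilon^{2}$ for some $\alpha \in (0,1)$; this determines $t(\alpha) = \sqrt{\kp{0}}/(\varepsilon\sqrt{1-\alpha})$ and $k(\alpha) = A_i(t(\alpha))/(\alpha\varepsilon^{2})$. It then remains to choose $\alpha$ to minimise $k(\alpha)\,\mathbb{E}[C_{t(\alpha)}[\mathbf 1,\mathbf 1]]$.

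For case (i), both $A_1(t)$ and the per-sample cost are linear in $t$, so the total cost is proportional to $kt \propto (\alpha(1-\alpha))^{-1}$ and is minimised at $\alpha=1/2$. This recovers $t=\sqrt{2\kp{0}}\varepsilon^{-1}$, $k$ of order $\varepsilon^{-3}$, and total cost $O(\varepsilon^{-4})$. For cases (ii) and (iii) the total cost is, up to polynomial prefactors in $\varepsilon$, of the form $\alpha^{-1}\exp(|\lambda_{*}|\,t(\alpha))$, the exponential originating from the per-sample cost in (ii) and from $A_3$ in (iii). Taking logarithms and differentiating in $\alpha$, the stationary condition reads $\alpha = 2\varepsilon(1-\alpha)^{3/2}/(|\lambda_{*}|\sqrt{\kp{0}})$, whose leading-order solution as $\varepsilon\to 0$ is $\alpha \sim 2\varepsilon/(|\lambda_{*}|\sqrt{\kp{0}})$. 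Substituting back gives $t \sim \sqrt{\kp{0}}\varepsilon^{-1}$ and the prescribed formulae for $k$, and the total cost is $O(\exp(|\lambda_{*}|\sqrt{\kp{0}}\varepsilon^{-1}))$.

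The main obstacle is the coupled exponential--polynomial optimisation in cases (ii) and (iii): one must argue that the first-order stationary point actually minimises the cost asymptotically rather than being a saddle or a boundary-driven extremum. This follows from convexity of the log-cost in $\alpha\in(0,1)$ together with the divergence of the cost at both endpoints $\alpha\downarrow 0$ and $\alpha\uparrow 1$. A minor technicality is that the bounds of Theorem~\ref{cor:var} and the asymptotic equivalences of Lemma~\ref{thm: cost} hold only for $t,k$ sufficiently large; since the prescribed optima diverge as $\varepsilon\to 0$, this is automatic in the stated limit.
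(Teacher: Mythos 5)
Your proposal is correct and follows essentially the same route as the paper: combine the MSE bounds of Theorem~\ref{cor:var} with the per-sample cost asymptotics of Lemma~\ref{thm: cost} and solve the resulting constrained optimisation, your error-budget parameter $\alpha$ being just a reparametrisation of the paper's Lagrange multiplier (at the optimum the constraint is active and $k$ saturates it for fixed $t$, so the two are equivalent). One small bonus of your derivation: in case (i) it yields $k=\kappa_{[1]}\kappa_{[0]}^{-1}t^{3}$, agreeing with the paper's own proof and confirming that the exponent ordering in the theorem statement is a typo; and your remark about absorbing the residual $\varepsilon^{-3}$ prefactor in cases (ii)--(iii) is exactly the step the paper handles by exploiting the $+\eta$ slack in the definition of $\kappa_{[0]}$.
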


\begin{proof}
(i) Following Theorem~\ref{cor:var} and Lemma~\ref{thm: cost}, it follows that we want to solve the optimisation problem:
    \begin{equation*}
      \text{minimize: } \quad tk \quad \text{ subject to: } \quad \frac{t \kappa_{[1]} }{k} +
      \frac{\kappa_{[0]}}{t^2}\le \varepsilon^2, \quad k, t > 0.
    \end{equation*}
    Introducing a Lagrangian variable $\xi \ge 0$ to penalise the constraint, we see that we expect the minimum to occur when
    \begin{align*}
      k + \frac{\kappa_{[1]} \xi}{k} - 2 \kappa_{[0]} \xi t^{-3} = 0, \qquad
      t - \frac{t \kappa_{[1]}\xi}{k^2} = 0.
    \end{align*}
    Rearranging the second expression, and noting that $t>0$, we deduce $k^2 = \kappa_{[1]}\xi$, and hence substituting back into the first expression, that $k = \kappa_{[1]} \kappa_{[0]}^{-1} t^3$. Substituting this into the constraint, we deduce that $t = \sqrt{2\kappa_{[0]}} \varepsilon^{-1}$, and the expression for the optimal cost follows immediately.

\medskip

(ii)  Similarly to the previous case, we need to solve the problem:
    \begin{equation*}
      \text{minimize: } \quad \me^{\lambda_{\ast}t} k \quad \text{ subject to: } \quad \frac{\kappa_{[2]}}{k} +
        \frac{\kappa_{[0]}}{t^2}\le \varepsilon^2, \quad k, t > 0.
    \end{equation*}
    Proceeding as above to introduce a Lagrangian penalisation, and optimising over $t$ and $k$, we deduce that $2 \xi \kappa_{[0]} = \lambda_{\ast} k t^3 \me^{\lambda_{\ast}t} = 2 \kappa_{[0]} \kappa_{[2]}^{-1}k^2 \me^{\lambda_{\ast}t}$, and hence $k = \kappa_{[2]} \kappa_{[0]}^{-1}\lambda_{\ast} t^3/2$. Substituting into the constraint, we deduce that the optimal $t$ satisfies $t^3 \varepsilon^2 - \kappa_{[0]} t - 2 \kappa_{[0]} \lambda_{\ast}^{-1} = 0$. It follows for $\varepsilon$ sufficiently small that $t \le \sqrt{\kappa_{[0]}} \varepsilon^{-1}+\eta$ for some $\eta >0$. Recalling that $\kappa_{[0]}$ is already defined only up to some small constant, we can assume $\eta=0$ and absorb an $\varepsilon^{-3}$ term into the same constant. The remainder of the argument then follows directly as above.

\medskip

(iii) The case where $\lambda_{\ast} < 0$ follows with an almost
    identical argument; in this case the optimisation problem is:
    \begin{equation*}
      \text{minimize: } \quad k \quad \text{ subject to: } \quad \frac{\kp{3} \me^{-\lambda_{\ast}t} }{k} +
        \frac{\kp{0}}{t^2} \le \varepsilon^2, \quad k, t > 0,
    \end{equation*}
    and the answer follows from an essentially identical set of steps to (ii).
\end{proof}

\section{Complexity Analysis  of Basic NRW Monte Carlo}\label{NRWMCCA}
Next we turn our attention to the setting of the Monte Carlo simulation based on the NRW estimator, in other words, via the representation \eqref{phi}. The picture in this setting is typically less competitive than in the the NBP estimator setting, however, this approach will offer additional flexibility in terms of possible algorithmic approaches to finding $\lambda_*$. Recall that $\overline{\beta} := \textstyle{\sup_{r\in D, \upsilon\in V}\beta(r,\upsilon)}$, and introduce also $\underline{\beta} := \textstyle{\inf_{r\in D, \upsilon\in V}\beta(r,\upsilon)}$.

\begin{theo}[NRW Monte Carlo convergence for $\lambda_*$]\label{NRWvar}
For all $\lambda_*\in\R$ and $g\in L^+_\infty(D\times V)$ there exists $\kpt{1} >0$ and $\lambda_1 \in \R$ such that  
\[
\mathbf{E}_{(r,\upsilon)} \Big[\Big(\left(\Psi^{\texttt{\emph {rw}}}_k[g](t, r, \upsilon)\right)^{1/t}-\me^{\lambda_{\ast}}\Big)^{2}\Big]\le
\kpt{1}\frac{1}{k}{\rm e}^{(\lambda_1 -2\lambda_*)t} + \frac{\kp{0}}{t^2}, 
\]
as $t\to\infty$. Moreover, $(2\lambda_* \vee( \lambda_* + \underline\beta)) \leq \lambda_1 \leq \lambda_* + \overline\beta$.
\end{theo}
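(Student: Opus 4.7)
The plan is to mimic the proof of Theorem~\ref{cor:var} by splitting the mean-squared error into a deterministic bias term and a stochastic variance term. Set $Z:=\Psi^{\texttt{rw}}_k[g](t, r, \upsilon)$ and recall from Lemma~\ref{NRWrep} that $\mathbf{E}_{(r,\upsilon)}[Z]=\psi_t[g](r,\upsilon)$. I would begin with the inequality $(Z^{1/t}-\me^{\lambda_*})^2\le 2(Z^{1/t}-\psi_t[g](r,\upsilon)^{1/t})^2+2(\psi_t[g](r,\upsilon)^{1/t}-\me^{\lambda_*})^2$. The deterministic bias term is handled as in the NBP case: substituting $\psi_t[g](r,\upsilon)=\me^{\lambda_* t}\langle\tilde\varphi,g\rangle\varphi(r,\upsilon)(1+O(\me^{-\varepsilon t}))$ from Theorem~\ref{CVtheorem}, taking $1/t$-th powers and expanding $a^{1/t}=1+(\log a)/t+O(1/t^2)$ produces the $\kp{0}/t^2$ contribution for $t$ large.

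For the stochastic piece, I would exploit the fact that $Z$ is the empirical mean of $k$ i.i.d.\ copies of $Y:=\me^{\int_0^t\beta(R_s,\Upsilon_s)\d s}g(R_t,\Upsilon_t)\mathbf{1}_{(t<\tau^D)}$, so $\mathrm{Var}(Z)\le\mathbf{E}_{(r,\upsilon)}[Y^2]/k$. The key observation is that
\[
\mathbf{E}_{(r,\upsilon)}[Y^2]=\mathbf{E}_{(r,\upsilon)}\big[\me^{\int_0^t 2\beta(R_s,\Upsilon_s)\d s}g^2(R_t,\Upsilon_t)\mathbf{1}_{(t<\tau^D)}\big]
\]
is itself the expectation semigroup of the same $\alpha\pi$-NRW but with potential $2\beta$ and test function $g^2$. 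Since $2\beta$ remains uniformly bounded above, Remark~\ref{CVtheoremupgrade} allows the application of Theorem~\ref{CVtheorem} to this semigroup, producing a principal eigenvalue $\lambda_1\in\R$ and a constant $C_1=C_1(g,r,\upsilon)>0$ with $\mathbf{E}_{(r,\upsilon)}[Y^2]\sim C_1\me^{\lambda_1 t}$ as $t\to\infty$.

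The main obstacle I anticipate lies in combining these two ingredients, because $x\mapsto x^{1/t}$ is not globally Lipschitz on $[0,\infty)$, so a naive Taylor argument fails when $Z$ happens to be much smaller than $\psi_t[g]$. I would therefore split on the good event $\mathcal{G}=\{Z\ge\tfrac12\psi_t[g](r,\upsilon)\}$. On $\mathcal{G}$, the mean-value theorem together with monotonicity of $\xi\mapsto\xi^{1/t-1}$ yields the Lipschitz-type estimate $(Z^{1/t}-\psi_t[g]^{1/t})^2\le t^{-2}(\psi_t[g]/2)^{2/t-2}(Z-\psi_t[g])^2$; on $\mathcal{G}^c$ the crude pointwise bound $(Z^{1/t}-\psi_t[g]^{1/t})^2\le \psi_t[g]^{2/t}$ is combined with Chebyshev's inequality $\mathbf{P}(\mathcal{G}^c)\le 4\mathrm{Var}(Z)/\psi_t[g]^2$. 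Both contributions end up of order $\psi_t[g]^{2/t-2}\mathrm{Var}(Z)$, and inserting $\psi_t[g]^{2/t}\to \me^{2\lambda_*}$ together with $\psi_t[g]\sim C\me^{\lambda_* t}$ produces the claimed $\kpt{1}\me^{(\lambda_1-2\lambda_*)t}/k$ bound.

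Finally, the three bounds on $\lambda_1$ follow from pointwise comparisons inside the integrand of $\mathbf{E}_{(r,\upsilon)}[Y^2]$. Writing $\me^{2\int\beta}=\me^{\int\beta}\me^{\int\beta}$ and bounding $\me^{\int\beta}g\le\|g\|_\infty\me^{\overline\beta t}$ gives $\mathbf{E}_{(r,\upsilon)}[Y^2]\le\|g\|_\infty\me^{\overline\beta t}\psi_t[g](r,\upsilon)$, hence $\lambda_1\le\lambda_*+\overline\beta$; bounding one factor $\me^{\int\beta}\ge\me^{\underline\beta t}$ from below gives $\mathbf{E}_{(r,\upsilon)}[Y^2]\ge\me^{\underline\beta t}\psi_t[g^2](r,\upsilon)$, and since $\psi_t[g^2]$ also grows at exponential rate $\lambda_*$ by Theorem~\ref{CVtheorem} applied to $g^2$, this gives $\lambda_1\ge\lambda_*+\underline\beta$; and Jensen's inequality $\mathbf{E}_{(r,\upsilon)}[Y^2]\ge\psi_t[g](r,\upsilon)^2\sim C^2\me^{2\lambda_* t}$ yields $\lambda_1\ge 2\lambda_*$.
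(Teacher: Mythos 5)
Your proposal follows the same overall architecture as the paper: split into a deterministic bias plus a variance of the estimator, recognise $\mathbf{E}_{(r,\upsilon)}[Y^2]$ as the expectation semigroup of the same NRW with potential $2\beta$, invoke Remark~\ref{CVtheoremupgrade} to apply Theorem~\ref{CVtheorem} to that semigroup so as to extract the rate $\lambda_1$, and sandwich $\lambda_1$ pointwise against the potentials (see Lemma~\ref{interlem}, which is exactly your three sandwich inequalities, with the Jensen step and the $\underline\beta,\overline\beta$ factoring).

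The one place where you genuinely diverge from the paper is in handling the non-Lipschitz map $x\mapsto x^{1/t}$. You split on the good event $\mathcal{G}=\{Z\ge\tfrac12\psi_t[g]\}$, use a mean-value estimate on $\mathcal{G}$, and a crude bound plus Chebyshev on $\mathcal{G}^c$. The paper avoids the event split entirely: for $t\ge 1$, concavity of $h_t(x)=x^{1/t}$ gives the \emph{pointwise} inequality
\[
\bigl(h_t(x)-h_t(x_0)\bigr)^2 \;\le\; (x-x_0)^2\,\Bigl(x_0^{1/t-1}\max\{1,1/t\}\Bigr)^2 ,
\]
valid on all of $[0,\infty)$ (tangent line for $x\ge x_0$, secant through the origin for $x\le x_0$), which immediately yields \eqref{powerestimate} after taking expectations. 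Both routes arrive at the same order $\psi_t[g]^{2/t-2}\mathrm{Var}(Z)$ for the stochastic term, so both are correct, but the paper's pointwise inequality is cleaner and dispenses with the Chebyshev step. If you rewrite your argument using that inequality you recover the paper's proof almost verbatim; your version works too, at the cost of two extra estimates and a mildly worse constant.
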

It is immediately obvious from the above result that, since $\lambda_1-2\lambda_*\geq 0$ there is no setting in which the NRW Monte Carlo offers better asymptotic accuracy that the NBP Monte Carlo algorithm does. The reason for this is quite simple. It is a straightforward consequence of the previously observed fact that a large majority of simulations for the NRW Monte Carlo estimator will return zero contribution to the estimate on account of the indicator in \eqref{phi}.

The constant $\kpt{1}$ can be computed using convergence results in the appendix (see \eqref{prime}) as $2 \me^{2\lambda_{\ast}}\left( \varphi_1(r,v) \langle \tilde{\varphi}_1,g^2\rangle - (\varphi(r,v) \langle \tilde{\varphi},g\rangle)^2 \mathbf{1}_{\lambda_1 = 2 \lambda_{\ast}}\right)/C_{[0]}^2$. Here $\varphi_1, \tilde{\varphi}_1$ are analogues of $\varphi, \tilde{\varphi}$ for a related semigroup specified in the appendix.

\smallskip

In terms of  computational cost, it is again a simple exercise to identify the analogue of Lemma \ref{thm: cost}. 
In this setting we need only define the cost of each simulation by the function 
\begin{equation}
C_t[f] = \sum_{j} f(R_{t_j},\Upsilon_{t_j})\mathbf{1}_{(t_j \leq t)}, \qquad t\geq 0,
\label{RWcostfunction}
\end{equation}
where $0\leq t_0< t_1<\cdots$ are the scatter times of the process $(R,\Upsilon)$ and $f\in L^{+}_{\infty}(D\times V)$. As in the setting of Lemma \ref{thm: cost}, we can write $C[f]=A+M$, where $M$ is a martingale with respect to the natural filtration of $(R,\Upsilon)$ and $A$ is the bounded variation compensator of $C[f]$. As before, the scattering rates of $(R,\Upsilon)$ together with standard computations allow us to write, for $ t\geq 0$,
\begin{equation}
A_t = \int_0^{t\wedge \tau^D}  \alpha\pi[f](R_s, \Upsilon_s)\d s
, \qquad t\geq 0,
\label{AcostRW}
\end{equation}
where $\pi[h]$ is defined similarly to \eqref{pih}.

\smallskip

On account of the fact that 
\[
\alpha(r,\upsilon) = \sigma_{\texttt{s}}(r,\upsilon)\int_V \pi_{\texttt{s}}(r, \upsilon, \upsilon')\d \upsilon' +
\sigma_{\texttt{f}}(r,\upsilon) \int_V \pi_{\texttt{f}}(r, \upsilon, \upsilon')\d \upsilon',
\]
we can use the assumptions (H1) and (H2) to deduce that 
\[
\sup_{r\in D, \upsilon\in V} \alpha(r,\upsilon) \leq \overline c,
 \]
 for constant $0<
 \overline{c}<\infty$. We can think of the trajectory of process $(R,\Upsilon)$ as a series of randomly placed sticks laid end to end in the domain $D$. Suppose that $S_1,S_2, \cdots, S_N$ are the successive lengths of these sticks, where $N$ is the last stick which touches the boundary of $D$ (with the understanding that $N = \infty$ if this does not happen).  The estimate above ensures that we can construct a sequence of iid random variables, which are exponentially distributed with rate $1$, say ${\bf e}_1,{\bf e}_2,\cdots$, such that $S_i\geq \upsilon_{\texttt{min}}{\bf e}_i/\overline{c}$, for $i = 1,\cdots, N$. 
 It now follows that  the probability that the length of any given  stick is bounded below by ${\rm diam}(D) = \sup\{|x-y|: x,y\in \partial D\}$ is uniformly bounded below by $\exp(- {\rm diam}(D)\overline{c}/\upsilon_{\texttt{min}})$.  Accordingly, it follows that $N$ is upper bounded by $\Gamma$, where $\Gamma$ is a geometrically distributed random variable with parameter $\exp(- {\rm diam}(D)\overline{c}/\upsilon_{\texttt{min}})$. Moreover, we also have that 
 \[
 \tau^D \leq \Gamma {\rm diam}(D)/\upsilon_{\texttt{min}},
 \]
so that, in particular, $\sup_{r\in D, \upsilon\in V}\mathbb{E}_{\delta_{(r, \upsilon)}}[\tau^D]<\infty$.
We thus have the following result for the computational cost.

 \begin{lem}[NRW Expected simulation cost]\label{thm: costrw}
 For $f\in L^+_\infty(D\times V)$
\[
\lim_{t\to\infty}\mathbf{E}_{(r,\upsilon)}\big[C_{t}[f]\big] =  \mathbf{E}_{(r,\upsilon)}\left[ \int_0^{ \tau^D}  \alpha\pi[f](R_s, \Upsilon_s)\d s\right]<\infty.
\]
 \end{lem}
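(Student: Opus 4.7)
The plan is to leverage the Doob--Meyer decomposition $C[f] = A + M$ already recorded just before the lemma statement, combined with monotone convergence and the a priori control on $\tau^D$ established in the preceding paragraph.

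First, I would take expectations in $C_t[f] = A_t + M_t$. Provided $M$ is a genuine martingale (not merely a local one), this immediately gives $\mathbf{E}_{(r,\upsilon)}[C_t[f]] = \mathbf{E}_{(r,\upsilon)}[A_t]$. To justify this, note that $C_t[f] \leq \|f\|_\infty N_t$, where $N_t$ is the number of scatter events of $(R,\Upsilon)$ up to time $t$. But from the stick-length comparison recalled in the excerpt, $N_t$ is stochastically dominated by $\Gamma$, a geometric random variable with parameter $\exp(-\mathrm{diam}(D)\overline{c}/\upsilon_{\texttt{min}})$, and in particular has finite mean uniformly in $t$ and in the starting configuration. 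Hence $C_t[f]$ is integrable with a uniform bound, which upgrades the local martingale $M$ to a true martingale with $\mathbf{E}_{(r,\upsilon)}[M_t]=0$.

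Second, since $A_t = \int_0^{t\wedge\tau^D}\alpha\pi[f](R_s,\Upsilon_s)\,\d s$ is a monotone increasing function of $t$ and has a pointwise limit $\int_0^{\tau^D}\alpha\pi[f](R_s,\Upsilon_s)\,\d s$ as $t\to\infty$, monotone convergence gives
\[
\lim_{t\to\infty}\mathbf{E}_{(r,\upsilon)}[C_t[f]] = \lim_{t\to\infty}\mathbf{E}_{(r,\upsilon)}[A_t] = \mathbf{E}_{(r,\upsilon)}\!\left[\int_0^{\tau^D}\alpha\pi[f](R_s,\Upsilon_s)\,\d s\right].
\]

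Finally, finiteness follows from the bounds already in hand. Since $\pi(r,\upsilon,\cdot)$ is a probability density, one has $\pi[f](r,\upsilon) \leq \|f\|_\infty$, and the preceding argument produced a uniform bound $\alpha(r,\upsilon)\leq \overline{c}<\infty$ under (H1) and (H2). Consequently the integrand is bounded by $\overline{c}\|f\|_\infty$, and the limit is dominated by $\overline{c}\|f\|_\infty\,\mathbf{E}_{(r,\upsilon)}[\tau^D]$, which is finite (and uniformly bounded in $(r,\upsilon)$) thanks to the geometric domination $\tau^D\leq \Gamma\,\mathrm{diam}(D)/\upsilon_{\texttt{min}}$ recorded just above the lemma. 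The only mildly delicate step is the promotion of $M$ to a true martingale, and the uniform integrability bound via $N_t\leq\Gamma$ handles this cleanly; everything else is bookkeeping.
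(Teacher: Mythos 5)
Your proof is correct and follows essentially the same route as the paper: the paper's justification of this lemma is precisely the paragraph preceding it (the decomposition $C[f]=A+M$ with compensator \eqref{AcostRW}, the uniform bound $\alpha\leq\overline{c}$, and the geometric domination of $N$ and $\tau^D$ via the stick-length argument), with monotone convergence doing the rest. Your explicit justification that $M$ is a true martingale — via domination of $C_t[f]$ by the integrable random variable $\|f\|_\infty\Gamma$ — is a detail the paper leaves implicit, and it is handled correctly.
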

 
 We should of course note again that, if there are $k$ cycles in the Monte Carlo simulation, then the cost grows no faster than proportional to $k$, irrespective of the time horizon.

\smallskip

As with the previous section, we conclude with a corollary which gives the complexity of the Monte Carlo algorithm. On this occasion 
\[
\texttt{Cost}(k, t) = k\times\sup_{f\in L_\infty^+(D\times V): \norm{f}_\infty\leq 1}C_t[f],
\]
for $k\geq 1$, $t\ge0$.

\begin{cor}[NRW Monte Carlo complexity] \label{cor:NRWComplex} There is a choice of $k,t$ such that
  \begin{equation} \label{eq:Mex}
    \mathbb{E}_{\delta_{(r,\upsilon)}} \Big[\Big(\left(\Psi^{\texttt{\emph{rw}}}_k[g](t, r, \upsilon)\right)^{1/t}-\me^{\lambda_{\ast}}\Big)^{2}\Big]\le \varepsilon^2.
  \end{equation}
  Moreover, \eqref{eq:Mex} holds in the limit as $\eps \to 0$ with 
  \begin{equation*}
    \mathbb E_{\delta_{(r, \upsilon)}}\big[\emph{\texttt{Cost}}(k, t)\big]  \le C \exp\left( (\lambda_1-2\lambda_{\ast}) \sqrt{\kappa_{[0]}} \varepsilon^{-1}\right),
  \end{equation*}
  for some $C>0$, and the optimal choice of $k,t$ corresponds to $t \approx \sqrt{\kp{0}} \varepsilon^{-1}$ and $k \approx (\lambda_1-2\lambda_{\ast}) \sqrt{\kp{0}} \kpt{1} \varepsilon^{-3} \exp((\lambda_1-2\lambda_{\ast}) t)/2$.
 
\end{cor}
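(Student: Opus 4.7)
The plan is to mirror the argument used for Theorem~\ref{thm:NBPComplexity}(iii), where the optimisation is dominated by an exponential cost rate. The two ingredients are already in place: Theorem~\ref{NRWvar} provides the mean-squared error bound
\[
\mathbf{E}_{(r,\upsilon)}\Big[\Big(\bigl(\Psi^{\texttt{rw}}_k[g](t,r,\upsilon)\bigr)^{1/t}-\me^{\lambda_*}\Big)^2\Big]\le \frac{\kpt{1}}{k}\me^{(\lambda_1-2\lambda_*)t}+\frac{\kp{0}}{t^2},
\]
and Lemma~\ref{thm: costrw} shows that the expected cost of a single NRW cycle is bounded \emph{uniformly in} $t$ (unlike the NBP case, there is no growth in the time horizon because paths are killed on exit from $D$ after a geometric number of scatterings). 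Consequently the total expected cost is bounded above by $\tilde C\,k$ for a constant $\tilde C$ independent of $t$.

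First I would formulate the constrained optimisation problem: minimise $k$ subject to
\[
\frac{\kpt{1}}{k}\me^{(\lambda_1-2\lambda_*)t}+\frac{\kp{0}}{t^2}\le\varepsilon^2,\qquad k,t>0.
\]
Introducing a Lagrange multiplier $\xi\ge 0$ and differentiating in $k$ and $t$ produces the first-order conditions
\[
k^2=\xi\kpt{1}\me^{(\lambda_1-2\lambda_*)t},\qquad \xi\Bigl(\kpt{1}(\lambda_1-2\lambda_*)\frac{\me^{(\lambda_1-2\lambda_*)t}}{k}-\frac{2\kp{0}}{t^3}\Bigr)=0,
\]
which on elimination yield the relation $k=\tfrac{1}{2}(\lambda_1-2\lambda_*)\kpt{1}\kp{0}^{-1}t^3\,\me^{(\lambda_1-2\lambda_*)t}$.

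Substituting this into the active constraint produces
\[
\frac{2\kp{0}}{(\lambda_1-2\lambda_*)t^3}+\frac{\kp{0}}{t^2}=\varepsilon^2,
\]
so that, for $\varepsilon$ small enough and $t$ correspondingly large, the dominant balance forces $t\sim \sqrt{\kp{0}}\,\varepsilon^{-1}$, exactly as in the NBP sub-critical case. Back-substitution then gives $k\sim\tfrac{1}{2}(\lambda_1-2\lambda_*)\kpt{1}\sqrt{\kp{0}}\,\varepsilon^{-3}\me^{(\lambda_1-2\lambda_*)t}$, matching the statement.

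Finally, the total cost bound follows from $\mathbb E[\texttt{Cost}(k,t)]\le \tilde C k$, together with the observation that the polynomial prefactor $\varepsilon^{-3}$ can be absorbed into the exponential by allowing the constant $C$ to grow slightly in the regime $\varepsilon\to 0$ (since for any $\delta>0$ we have $\varepsilon^{-3}\le C_\delta \exp(\delta\sqrt{\kp{0}}\varepsilon^{-1})$, and $\kpt{1}$ can be tweaked as in Theorem~\ref{cor:var} by introducing an arbitrary $\eta>0$). The only subtlety I anticipate is the degenerate case $\lambda_1=2\lambda_*$, in which the exponential weight is trivial and the constraint reduces to $\kpt{1}/k+\kp{0}/t^2\le\varepsilon^2$, yielding polynomial cost; this fits the stated bound since $\me^{(\lambda_1-2\lambda_*)\sqrt{\kp{0}}\varepsilon^{-1}}=1$ and the only work is to note the optimisation still returns the same scalings for $t$ and $k$. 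That degenerate check, and the absorption of sub-exponential factors into the constant $C$, are the only real bookkeeping points; the rest is routine Lagrangian calculus analogous to the NBP argument already carried out.
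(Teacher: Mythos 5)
Your proposal is correct and follows essentially the same route as the paper: bound the expected cost by a constant multiple of $k$ via Lemma~\ref{thm: costrw}, then minimise $k$ subject to the mean-squared-error constraint from Theorem~\ref{NRWvar}, which the paper handles by noting the optimisation is identical to case (iii) of Theorem~\ref{thm:NBPComplexity}. You merely carry out the Lagrangian calculus explicitly (and note the degenerate case $\lambda_1=2\lambda_*$ and the absorption of the $\varepsilon^{-3}$ prefactor, both consistent with how the paper treats them in the NBP argument), so there is nothing to add.
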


\begin{proof}
From Lemma \ref{thm: costrw} we have, for all $t>0$,
$
 \mathbb E_{\delta_{(r, \upsilon)}}\big[{\texttt{Cost}}(k, t)\big] \leq C' k,
$
for some constant $C'$, and hence the objective is to:
\begin{equation*}
  \text{minimize: } \quad k \quad \text{ subject to: } \quad \frac{\kpt{1}\me^{(\lambda_1-2\lambda_{\ast}) t} }{k} +
    \frac{\kp{0}}{t^2} \le \varepsilon^2, \quad k, t > 0.
\end{equation*}
This is essentially identical to the minimisation carried out in (iii) of Theorem~\ref{thm:NBPComplexity}, and the result follows.
\end{proof}

\section{Importance sampling with NRW Monte Carlo}\label{importance}

One common method for improving the efficiency of Monte Carlo estimation schemes is through the use of importance sampling. Recalling, for example, \eqref{Psirw}, the term ${\rm e}^{\int_0^t\beta(\texttt{r}_s^i, \texttt{v}_s^i)\D s}\mathbf{1}_{\{t <\texttt{t}_{\rm end}^i\}}$ in the estimator can be interpreted as the weight of the $i$th particle. In particular, considering the simple case where $g \equiv 1$, one wants to apply importance sampling to reduce the variance of this particle weight as much as possible, by simulating paths from a suitably chosen probability distribution.

In this context, we will show that the correct way to think about importance sampling in the case of the NTE is through the use of Doob's $h$-transform. In this section, we show how to $h$-transform the NRW, and prove that under mild assumptions, this gives an unbiased estimate of the semi-group $(\psi_t; t \ge 0)$.

\smallskip


Appealing to the Markov property together with Theorem \ref{CVtheorem}, suppose we write $\mathcal{G}_t = \sigma((R_s, \Upsilon_s), s\leq t)$ for $t\geq 0$. Then, for $0\leq s\leq t<\infty$,
\begin{align}
&\mathbf{E}_{(r,\upsilon)}
\left[\left.{\rm e}^{-{\lambda_*} t}{\rm e}^{\int_0^t   \beta(R_u,\Upsilon_u)\d u}\frac{\varphi(R_t, \Upsilon_t)}{\varphi(r,\upsilon)}\mathbf{1}_{( t<\tau^{D})}\right|\mathcal{G}_s\right]\notag\\
& = \mathbf{1}_{( s<\tau^{D})}
{\rm e}^{-{\lambda_*} s}{\rm e}^{\int_0^s   \beta(R_u,\Upsilon_u)\d u}\notag\\
&\hspace{2cm}\times\mathbf{E}_{(r',\upsilon')}
\left[{\rm e}^{-{\lambda_*} (t-s)}{\rm e}^{\int_0^{t-s}   \beta(R_u,\Upsilon_u)\d u}\frac{\varphi(R_{t-s}, \Upsilon_{t-s})}{\varphi(r,\upsilon)}\mathbf{1}_{( t-s<\tau^{D})}\right]_{(r', \upsilon') = (R_s,\Upsilon_s)}\notag \\
&= \mathbf{1}_{( s<\tau^{D})}
{\rm e}^{-{\lambda_*} s}{\rm e}^{\int_0^s   \beta(R_u,\Upsilon_u)\d u} {\rm e}^{-{\lambda_*} (t-s)}
\psi_{t-s}[\varphi](R_s, \Upsilon_s)/\varphi(r,v)\notag\\
&=\mathbf{1}_{( s<\tau^{D})}
{\rm e}^{-{\lambda_*} s}{\rm e}^{\int_0^s   \beta(R_u,\Upsilon_u)\d u}\varphi(R_s, \Upsilon_s)/\varphi(r,v).
\label{martingale}
\end{align}
Together with the boundedness of $\varphi$ and the Markov property this tells us that  
\[
{\rm e}^{-{\lambda_*} t}{\rm e}^{\int_0^t   \beta(R_u,\Upsilon_u)\d u}\frac{\varphi(R_t, \Upsilon_t)}{\varphi(r,\upsilon)}\mathbf{1}_{( t<\tau^{D})}, \qquad t\geq 0,
\]
is a martingale.  

\smallskip

As such we can use this martingale to effect a change of measure on the path space of $(R, \Upsilon) = ((R_t, \Upsilon_t), t\geq 0)$ via the relation 
\begin{equation}\label{COM}
\mathbf{P}^\varphi_{(r,\upsilon)}(A, \, t<\tau^{D}) = \mathbf{E}_{(r,\upsilon)}\left[{\rm e}^{-{\lambda_*} t}{\rm e}^{\int_0^t   \beta(R_s,\Upsilon_s)\d s}\frac{\varphi(R_t, \Upsilon_t)}{\varphi(r,\upsilon)}\mathbf{1}_{(A, \, t<\tau^{D})}\right] = {\rm e}^{-{\lambda_*} t}\frac{\psi_t[\varphi \mathbf{1}_A](r,\upsilon)}{\varphi(r, \upsilon)}
\end{equation}
for all $(r,\upsilon)\in D\times V$ and $A\in\sigma((R_s,\Upsilon_s), s\leq t)$. The theory of Doob $h$-transforms dictates that the process $(R, \Upsilon)$ under $\mathbf{P}^\varphi_{(r,\upsilon)}$, $r\in D$, $\upsilon\in V$, is again a Markov process, see \cite{CW}. It is conservative only if we can prove that 
$\mathbf{P}^\varphi_{(r,\upsilon)} (\tau^{D} =\infty)=1$, for all $(r,\upsilon)\in D\times V$, which is to say, the process $(R,\Upsilon)$ is trapped in $D$. 
However this is easily verified from \eqref{COM} by taking $A$ to be the entire sample space and invoking the martingale property to deduce that $\mathbf{P}^\varphi_{(r,\upsilon)} (t<\tau^{D} )=1$, for all $t\geq 0$.
Moreover, since $\tilde\varphi, \varphi\in L^+_\infty(D\times V)$, we can  normalise $\tilde\varphi$ and $\varphi$ so that $\langle\tilde\varphi, \varphi\rangle = 1$, for bounded and measurable $g: \bar D\times V\to [0,\infty)$ we have
\[
\lim_{t\to\infty}\mathbf{E}^\varphi_{(r,\upsilon)}[g(R_t,\Upsilon_t)] =\lim_{t\to\infty} {\rm e}^{-{\lambda_*} t}\frac{\psi_t[\varphi g](r,\upsilon)}{\varphi(r, \upsilon)}
=
\langle g,  \varphi\tilde\varphi \rangle,
\]
indicating that $\varphi\tilde\varphi$ is the stationary distribution of $(R, \Upsilon)$  under $\mathbf{P}^\varphi_{(r,\upsilon)}$, $r\in D$, $\upsilon\in V$.

\smallskip

As is usual with such space-time changes of measure, the action of its infinitesimal generator, say ${\bL}_\varphi$, can be heuristically calculated by applying the $h$-transformation to the generator of the process $(R,\Upsilon)$, say $\bL$, with potential $\beta -{\lambda_*}$ under $\mathbf{P}_{(r,\upsilon)}$, $(r,\upsilon)\in \bar D\times V$.
Given the discussion in Section \ref{m21subsection}, we have the action of the generator $\bL$ satisfying 
\begin{equation}
\bL f(r,\upsilon) = \upsilon\cdot\nabla f(r,\upsilon)+ \alpha(r,\upsilon)\int_V \left( f(r,\upsilon') - f(r,\upsilon)\right) \pi(r,\upsilon,\upsilon')\d \upsilon',
\label{Ell}
\end{equation}
for all $f$ in the domain of $\bL$, written $\text{Dom}(\bL)$; e.g. we can take $f$ as continuous in $r$ and differentiable in $\upsilon$.
Hence, on $D\times V$,
\begin{align}
{\bL}_\varphi f (r,\upsilon)&= \left({\bL} + \beta -{\lambda_*} \right)^{\varphi}f(r,\upsilon)\notag\\
&= \frac{1}{\varphi(r,\upsilon)} {\bL}(\varphi f)(r,\upsilon) +\beta f(r,\upsilon) -{\lambda_*} f(r,\upsilon)\notag\\
&=\upsilon\cdot\nabla f(r,\upsilon)  +\frac{\upsilon\cdot\nabla \varphi(r,\upsilon) }{\varphi(r,\upsilon)}f(r,\upsilon) + \beta f(r,\upsilon) -{\lambda_*} f(r,\upsilon)\notag\\
&\hspace{1cm}+ \frac{\alpha(r,\upsilon)}{\varphi(r,\upsilon)}\int_V\left( f(r, \upsilon') - f(r,\upsilon)\right) \varphi(r,\upsilon') \pi(r,\upsilon,\upsilon')\d\upsilon'\notag\\
&\hspace{1cm}+f(r,\upsilon) \frac{\alpha(r,\upsilon)}{\varphi(r,\upsilon)} \int_V\left( \varphi(r,\upsilon') - \varphi(r,\upsilon) \right) \pi(r,\upsilon,\upsilon')\d\upsilon'\notag\\
&=\upsilon\cdot\nabla f(r,\upsilon)   + \frac{({\bL}+\beta-{\lambda_*})\varphi(r,\upsilon)}{\varphi(r,\upsilon)}f(r,\upsilon) \notag\\
&\hspace{1cm}+\alpha(r,\upsilon)\int_V\left( f(r, \upsilon') - f(r,\upsilon)\right)\frac{ \varphi(r,\upsilon')}{\varphi(r,\upsilon)} \pi(r,\upsilon,\upsilon')\d\upsilon'
\label{againwithh}
\end{align}
for all bounded and continuous  $f\in \text{Dom}(\bL)$. From Theorem \ref{CVtheorem}, we can interpret \eqref{leftandright} as heuristically meaning that  $(\bL + \beta-\lambda_*)\varphi=0$. Taking this on face value, we get 
\[
{\bL}_\varphi f (r,\upsilon)=\upsilon\cdot\nabla f(r,\upsilon)  +\int_V\left( f(r, \upsilon') - f(r,\upsilon)\right)\alpha(r,\upsilon)\frac{ \varphi(r,\upsilon')}{\varphi(r,\upsilon)} \pi(r,\upsilon,\upsilon')\d \upsilon'.
\]
In other words, ${\bL}_\varphi$ corresponds to an $(\alpha_\varphi, \pi_\varphi)$-NRW, where, for $r\in D$ and $\upsilon\in V$, 
\[
\alpha_\varphi(r,\upsilon) = \alpha(r,\upsilon)\int_V\frac{ \varphi(r,\upsilon')}{\varphi(r,\upsilon)} \pi(r,\upsilon,\upsilon')\d \upsilon'\text{ and }
\pi_\varphi(r,\upsilon, \upsilon') = \frac{ \varphi(r,\upsilon')\pi(r,\upsilon,\upsilon')}{\textstyle{\int_V} \varphi(r,\upsilon'')\pi(r,\upsilon,\upsilon'')\d \upsilon''} 
\]

An interesting observation in light of this change of measure is that, appealing to \eqref{COM}, by choosing $g\in L_\infty^+(D\times V)$, we have that 
\begin{equation}
 \psi_t[g](r,\upsilon)=
\mathbf{E}^\varphi_{(r,\upsilon)}\left[{\rm e}^{{\lambda_*} t}\frac{\varphi(R_t,\Upsilon_t)^{-1}}{\varphi(r, \upsilon)^{-1}} g(R_t,\Upsilon_t)\right] .
\label{perfect}
\end{equation}
This leads to the thought experiment. If we knew $\varphi$ and ${\lambda_*}$ and we take $g \equiv \varphi$, then the term inside the expectation on the right-hand side is simply a constant. As a consequence, $\psi_t[\varphi](r,v)$ can be computed exactly with a single Monte Carlo trajectory. In particular, we would be interested in a guess that would inform a Doob $h$-transformed process that would produce estimates for $\psi_t[g]$, which involves averaging a low-variance term that is close to the quantity inside the expectation in \eqref{perfect}.
The problem with this thought experiment is, of course, that we are aiming to generate Monte Carlo estimates of $\psi_t[g](r,\upsilon)$ with a view to estimating the quantities ${\lambda_*}$ and $\varphi$.

\smallskip

The idea of the thought experiment is not lost however. Suppose instead we were to make an educated guess for  $\varphi$ and ${\lambda_*}$. In particular, we would be interested in a guess that would inform a Doob $h$-transformed process that we could simulate with relatively few (if any) paths that leave the domain $D$. The next theorem formalises this idea (which also gives a rigorous basis to the discussion above by setting $h= \varphi$).

\begin{theo}\label{doobhthrm}
Suppose that $h\in L^+_\infty(D\times V)$   such that $h\geq 0$ on $D\times V$,  $h|_{\partial D^+}=0$ and 
\begin{equation}
\label{h1}\inf_{r\in D}\int_V h(r,\upsilon')\d\upsilon'>0.
\end{equation} 
Define 
\begin{equation}
{\ebJ} h(r,\upsilon)  =\alpha(r,\upsilon)\int_V\left(h(r,\upsilon') -h(r,\upsilon) \right)\pi(r,\upsilon,\upsilon')\d\upsilon',
\label{genactionJ}
\end{equation}
fon $D\times V$.
 Suppose that we denote by $(T_i, i\geq 0)$, the scattering times of $(R,\Upsilon)$ with $T_0 = 0$. Moreover, let $N_t = \sup\{i: T_i\leq t\}$.
Then 
\begin{equation}
\left.\frac{\d \mathbf{P}^h_{(r,\upsilon)}}{\d \mathbf{P}_{(r,\upsilon)}}\right|_{\sigma((R_s,\Upsilon_s), s\leq t)}:  = 
\exp\left(-\int_0^t  \frac{{\ebJ}h (R_s,\Upsilon_s)}{h (R_s,\Upsilon_s)}\d s\right)\prod_{i=1}^{N_t}\frac{h(R_{T_{i}}, \Upsilon_{T_{i}})}{h(R_{T_i}, \Upsilon_{T_{i-1}})}
\mathbf{1}_{(t<\tau^{D})}
\label{hCOM}
\end{equation}
characterises a neutron random walk whose expectation semigroup is given by 
\begin{equation}
\psi^h_t[g](r,\upsilon)  =\eU_t[g](r,\upsilon) + \int_0^{t} \eU_s\big[\ebJ_h \psi^h_{t-s}[g]\big](r,\upsilon)\d\upsilon'.
\label{psihsemig}
\end{equation}
where the scattering operator is given by 
\begin{equation}
\ebJ_h g(r,\upsilon)  = \int_V\left(g(r,\upsilon') -g(r,\upsilon) \right)\alpha(r,\upsilon)\frac{h(r,\upsilon')}{h(r,\upsilon)}\pi(r,\upsilon,\upsilon')\d\upsilon'
\label{J_h}
\end{equation}

on $D$.
Moreover, 
\begin{align}
&\psi_t[g](r,\upsilon)\notag\\
&= \mathbf{E}^h_{(r,\upsilon)}\left[
\exp\left(\int_0^t  \frac{{\ebJ}h (R_s,\Upsilon_s)}{h (R_s,\Upsilon_s)} +\beta(R_s,\Upsilon_s)\d s\right)\prod_{i=1}^{N_t}\frac{h(R_{T_i}, \Upsilon_{T_{i-1}})}{h(R_{T_{i}}, \Upsilon_{T_{i}})}
g(R_t, \Upsilon_t)
\mathbf{1}_{(t<\tau^{D})}\right]
\label{3rd}
\end{align}
for $t\geq 0$ and if we additionally assume that 
\begin{equation}
\inf_{r\in D,\upsilon\in V}  \lim_{s\to\kappa_{r,\upsilon}^D}\frac{|\upsilon| (\kappa_{r,\upsilon}^D-s)}{h (r+\upsilon s,\upsilon)} >0
\label{inf}
\end{equation}
where we recall that
$\notag
\kappa_{r,\upsilon}^{D} := \inf\{t>0 : r+\upsilon t\not\in D\}$,
then the term $\mathbf{1}_{(t<\tau^D)}$ in \eqref{hCOM}  and \eqref{3rd} can be removed and $(R,\Upsilon)$ under $\mathbf{P}^h_{(r,\upsilon)}$, $r\in D, \upsilon\in V$, is conservative.
\end{theo}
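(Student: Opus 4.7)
The plan is to interpret \eqref{hCOM} as a Girsanov-type change of measure for the piecewise-deterministic Markov process $(R,\Upsilon)$, in the spirit of \cite{davis_piecewise-deterministic_1984}. First I would identify the formal structure: the heuristic computation in \eqref{againwithh}, with $\varphi$ replaced by $h$ and the potential $\beta-{\lambda_*}$ stripped out, suggests that the $h$-transformed NRW has rate $\alpha_h$ and scatter density $\pi_h$ given by
\[
\alpha_h(r,\upsilon) = \alpha(r,\upsilon)\int_V\frac{h(r,\upsilon')}{h(r,\upsilon)}\pi(r,\upsilon,\upsilon')\d\upsilon',
\qquad
\pi_h(r,\upsilon,\upsilon') = \frac{h(r,\upsilon')\pi(r,\upsilon,\upsilon')}{\int_V h(r,\upsilon'')\pi(r,\upsilon,\upsilon'')\d\upsilon''}.
\]
A direct calculation yields $\alpha-\alpha_h = -\ebJ h/h$ and $\alpha_h\pi_h/(\alpha\pi) = h(\cdot,\upsilon')/h(\cdot,\upsilon)$, and plugging these into the standard Girsanov density for marked point processes reproduces exactly the right-hand side of \eqref{hCOM}.

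Second, I would verify rigorously that $M^h_t$, the right-hand side of \eqref{hCOM}, is a $\mathbf{P}_{(r,\upsilon)}$-martingale. Writing $M^h$ as a stochastic exponential driven by the compensated jump measure of $(R,\Upsilon)$, the uniform bounds on $\alpha\pi$ from (H1) together with the boundedness of $h$ and \eqref{h1} reduce this to an elementary moment bound. The Radon--Nikodym formula \eqref{hCOM} then defines a consistent family of sub-probability measures $\mathbf{P}^h_{(r,\upsilon)}$ on $(\mathcal{G}_t)_{t\geq 0}$, under which standard PDMP theory identifies $(R,\Upsilon)$ as an $(\alpha_h,\pi_h)$-NRW killed on exit from $D$; its generator acts as $\upsilon\cdot\nabla + \ebJ_h$, so the Dynkin-type derivation used for Lemma~\ref{NBPrep} yields the mild equation \eqref{psihsemig}. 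Formula \eqref{3rd} is then a simple inversion: starting from the representation \eqref{phi} in Lemma~\ref{NRWrep} and inverting the Radon--Nikodym derivative \eqref{hCOM} on $\{t<\tau^D\}$ (where $M^h_t>0$) produces exactly the integrand appearing in \eqref{3rd}.

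The main obstacle is the final claim, that under \eqref{inf} the process is conservative, $\mathbf{P}^h_{(r,\upsilon)}(\tau^D=\infty)=1$. The strategy is pathwise: between consecutive scatter times under $\mathbf{P}^h$, the next scatter occurs at hazard rate $\alpha_h(R_s,\Upsilon_s)$. Assumption (H2) together with \eqref{h1} gives a uniform positive lower bound $\alpha(r,\upsilon)\int_V h(r,\upsilon')\pi(r,\upsilon,\upsilon')\d\upsilon' \ge c'>0$, so $\alpha_h(r+\upsilon s,\upsilon) \ge c'/h(r+\upsilon s,\upsilon)$. Condition \eqref{inf} then forces $h(r+\upsilon s,\upsilon) \le C|\upsilon|(\kappa^D_{r,\upsilon}-s)$ as $s\uparrow\kappa^D_{r,\upsilon}$, uniformly in $(r,\upsilon)$, so that
\[
\int_0^{\kappa^D_{r,\upsilon}}\alpha_h(r+\upsilon s,\upsilon)\,\d s = +\infty,
\]
and a scatter occurs almost surely before the advection can leave $D$. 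Iterating using the strong Markov property at each successive scatter time shows $\mathbf{P}^h_{(r,\upsilon)}(\tau^D=\infty)=1$, so the indicator $\mathbf{1}_{(t<\tau^D)}$ may be dropped from \eqref{hCOM} and \eqref{3rd}.
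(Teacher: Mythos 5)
Your overall architecture is sound and genuinely different from the paper's in two places. Where the paper never touches Girsanov theory directly — it simply \emph{defines} $\psi^h_t[g]$ as the expectation of $g(R_t,\Upsilon_t)$ weighted by the right-hand side of \eqref{hCOM}, shows by a first-scatter decomposition plus Lemma 1.2 of Ch.~4 of \cite{Dynkin2} that this solves the mild equation \eqref{psihsemig}, and only then reads off that $\mathbf{P}^h$ is the law of an $(\alpha_h,\pi_h)$-NRW — you propose to identify the density as the marked-point-process stochastic exponential and invoke PDMP change-of-measure machinery. Both are legitimate, and your identification $\alpha_h-\alpha = \ebJ h/h$, $\alpha_h\pi_h/(\alpha\pi)=h(\cdot,\upsilon')/h(\cdot,\upsilon)$ is correct. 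One caveat: you cannot verify at that stage that $M^h_t$ (with the indicator $\mathbf{1}_{(t<\tau^D)}$) is a true $\mathbf{P}_{(r,\upsilon)}$-martingale, because $\mathbf{E}_{(r,\upsilon)}[M^h_t]=\mathbf{P}^h_{(r,\upsilon)}(t<\tau^D)$, i.e.\ the martingale property \emph{is} the conservativeness that only holds under the additional hypothesis \eqref{inf}; moreover $h$ vanishes on $\partial D^+$, so neither $\ebJ h/h$ nor the jump ratios are bounded and the "elementary moment bound" is not available. A priori you only get a supermartingale and a sub-probability family, which is all the first part of the theorem needs. Your key divergence estimate $\alpha_h(r+\upsilon s,\upsilon)\ge c'/h(r+\upsilon s,\upsilon)$ with $\int^{\kappa^D_{r,\upsilon}}\d s/h(r+\upsilon s,\upsilon)=\infty$ under \eqref{inf} is exactly the paper's computation, and your reading of \eqref{3rd} as the inversion of the Radon--Nikodym derivative on $\{t<\tau^D\}$ is also fine.

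The genuine gap is in the last step. Showing that each linear flight a.s.\ terminates in a scatter rather than in $\partial D$, and iterating via the strong Markov property, only tells you that $\tau^D>T_n$ for every $n$; it does \emph{not} give $\tau^D=\infty$ unless you also prove non-explosion, i.e.\ $T_n\to\infty$ $\mathbf{P}^h$-a.s. This is not automatic here: $\alpha_h$ blows up like $c'/h$ as the pre-scatter configuration approaches $\partial D^+$, so the inter-scatter times can be arbitrarily short and an accumulation of scatter times at a finite time (with positions converging to a boundary point) is not excluded by your argument. You would need an extra estimate — e.g.\ that after each reweighted scatter the expected time to the next scatter is bounded below, using that the post-scatter velocity has density proportional to $h(r,\upsilon')\pi(r,\upsilon,\upsilon')$ and \eqref{h1} keeps $\int_V h(r,\upsilon')\d\upsilon'$ bounded away from zero. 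The paper avoids this entirely: it sets $g\equiv 1$ in the mild equation \eqref{psihsemig}, observes $\ebJ_h 1=0$ so that the constant function $1$ is a solution, and invokes uniqueness (Gr\"onwall) to conclude $\psi^h_t[1]\equiv 1$, which rules out both boundary exit and explosion in a single stroke. If you want to keep your pathwise route you must close the non-explosion gap; otherwise you should switch to the analytic argument for this final step.
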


\begin{remark}\label{convenient}\rm
We also note from \eqref{psihsemig}, i.e. the expectation semigroup of   $(R,\Upsilon)$ under $\mathbf{P}^h_{(r,\upsilon)}$, $r\in D, \upsilon\in V$, that it is formally associated to the generator 
\begin{equation}
{\bL}_h g(r,\upsilon)  = \upsilon\cdot\nabla g(r,\upsilon)+\int_V\left(g(r,\upsilon') -g(r,\upsilon) \right)\alpha(r,\upsilon)\frac{h(r,\upsilon')}{h(r,\upsilon)}\pi(r,\upsilon,\upsilon')\d\upsilon'
\label{genaction}
\end{equation}
on $D\times V$.
\smallskip

Suppose, moreover, that we write $\bT = \upsilon\cdot\nabla$. If we further assume that $\bT h$ is well defined, for example, one could typically assume that $\bT$ is defined weakly in $ L_2(D\times V)$, then 
we can also write the term 
\begin{equation}
\frac{h(r+\upsilon t,\upsilon)}{h(r,\upsilon)} = \exp\left( \log h(r+\upsilon t,\upsilon) -\log h(r,\upsilon) \right)
= \exp\left(\int_0^t \frac{\bT h(r+\upsilon s,\upsilon)}{h(r+\upsilon s,\upsilon)}\d s\right).
\label{h/h}
\end{equation}
As such, a piecewise application of \eqref{h/h} allows us to convert the right-hand side of \eqref{3rd} into  
\begin{equation}
\psi_t[g](r,\upsilon)= h(r,\upsilon) \mathbf{E}^h_{(r,\upsilon)}\left[
\exp\left(\int_0^t  \frac{{\bL}h (R_s,\Upsilon_s)}{h (R_s,\Upsilon_s)} +\beta(R_s,\Upsilon_s)\d s\right)
\frac{g(R_t, \Upsilon_t)}{h(R_t, \Upsilon_t)}
\mathbf{1}_{(t<\tau^{D})}\right].
\label{4th}
\end{equation}
Similarly the change of measure \eqref{hCOM} can be identified alternatively as 
\begin{equation}
\left.\frac{\d \mathbf{P}^h_{(r,\upsilon)}}{\d \mathbf{P}_{(r,\upsilon)}}\right|_{\sigma((R_s,\Upsilon_s), s\leq t)}:  = 
\exp\left(-\int_0^t  \frac{{\bL}h (R_s,\Upsilon_s)}{h (R_s,\Upsilon_s)} \d s\right)
\frac{h(R_t, \Upsilon_t)}{h(r,\upsilon)}
\mathbf{1}_{(t<\tau^{D})}
\label{hCOM2}
\end{equation}
It is worth remarking that the representation of the semigroup $(\psi_t, t\geq 0)$ in \eqref{4th} takes on a more convenient form that in \eqref{3rd}. However, technically speaking, it requires more assumptions on the function $h$.
\hfill$\diamond$\end{remark}

\begin{proof}[Proof of Theorem \ref{doobhthrm}]
 Let us introduce $\psi^h_t[g](r,\upsilon) = \mathbf{E}^h_{(r,\upsilon)}[g(R_t, \Upsilon_t)\mathbf{1}_{(t<\tau^D)}]$, for $t\geq 0$, $r\in D$, $\upsilon\in V$ and $g\in L_\infty(D\times V)$, and note that we may also write
\[
\psi^h_t[g](r,\upsilon)  = \mathbf{E}_{(r,\upsilon)}\left[\exp\left(-\int_0^t  \frac{{\bJ}h (R_s,\Upsilon_s)}{h (R_s,\Upsilon_s)}\d s\right)\prod_{i=1}^{N_t}\frac{h(R_{T_{i}}, \Upsilon_{T_{i}})}{h(R_{T_i}, \Upsilon_{T_{i-1}})}
g(R_t, \Upsilon_t)\mathbf{1}_{(t<\tau^{D})}\right].
\]
Using standard arguments centred around the Markov property applied at the first scattering time, it is easy to show that $(\psi^h_t, t\geq0)$ is an expectation semigroup.

\smallskip

Next,  by conditioning on the first step of the NRW under $\mathbf{P}_{(r,\upsilon)}$, we get 

\begin{align}
&\psi^h_t[g](r,\upsilon) \notag\\
& = \exp\left(-\int_0^t \alpha(r+\upsilon s, \upsilon) +\frac{{\bJ}h (r+\upsilon s,\upsilon)}{h (r+\upsilon s,\upsilon)}\d s\right) 
g(r+\upsilon t,\upsilon)\mathbf{1}_{(t<\kappa_{r,\upsilon}^{D})}\notag\\
&\hspace{1cm}+\int_0^{t}\mathbf{1}_{(s<\kappa_{r,\upsilon}^{D})} \alpha(r+\upsilon s, \upsilon) \exp\left(-\int_0^s \alpha(r+\upsilon u, \upsilon) +\frac{{\bJ}h (r+\upsilon u,\upsilon)}{h (r+\upsilon u,\upsilon)}\d u\right)
\notag\\
&\hspace{5cm}
\int_{V}
\psi^h_{t-s}[g](r+\upsilon s,\upsilon')
\frac{h(r+\upsilon s,\upsilon')}{h(r+\upsilon s ,\upsilon)}\pi(r+\upsilon s, \upsilon, \upsilon')\d\upsilon' \d s.
\label{conserve}
\end{align}
Next note that from the assumptions (H1) and (H2) and \eqref{h1},
\[
\frac{\bJ h (r,\upsilon)}{h(r,\upsilon)} + \alpha(r,\upsilon)= \frac{\alpha(r,\upsilon)}{h(r,\upsilon)}\int_V h(r,\upsilon') \pi(r,\upsilon, \upsilon') \geq \frac{1}{h(r,\upsilon)}\underline{\alpha}\underline\pi\int_V h(r,\upsilon') \d\upsilon' ,
\]
where $\underline\alpha = \inf_{r\in D, \upsilon \in V} \alpha(r,\upsilon)>0$ and $\underline\pi = \inf_{r\in D, \upsilon,\upsilon' \in V} \pi(r,\upsilon, \upsilon')>0$
Together with assumption \eqref{inf} and and \eqref{h1}, this ensures that 
\[
\lim_{t\uparrow\kappa_{r,\upsilon}^D} \int_0^{t} \left\{ \frac{{\bJ}h (r+\upsilon s,\upsilon)}{h (r+\upsilon s,\upsilon)}  + \alpha (r+\upsilon s,\upsilon) \right\} \d s = \infty
\]
for all $r\in D$, $\upsilon\in D$. Since $\alpha$ is uniformly bounded from above, it follows that the indicators in 
 \eqref{conserve} are not necessary.

\smallskip

  Now appealing to Lemma 1.2, Chapter 4 in \cite{Dynkin2}, we can remove exponential potential terms at the expense of introducing an additive potential term. 
After application of the aforesaid lemma, we have
\begin{align}
\psi^h_t[g](r,\upsilon) &= 
g(r+\upsilon t,\upsilon)\notag\\
&\hspace{1cm}+\int_0^{t\wedge \kappa_{r,\upsilon}^{D}} \alpha(r+\upsilon s, \upsilon) \int_{V}
\notag \psi^h_{t-s}[g](r+\upsilon s,\upsilon')\frac{h(r+\upsilon s,\upsilon')}{h(r,\upsilon)}\pi(r+\upsilon s, \upsilon, \upsilon')\d\upsilon'\notag\\
&\hspace{3cm}-\int_0^{t\wedge \kappa_{r,\upsilon}^{D}} \left(\alpha(r+\upsilon s, \upsilon) + \frac{{\bJ}h (r+\upsilon s,\upsilon)}{h (r+\upsilon s,\upsilon)} \right)\psi^h_{t-s}[g](r+\upsilon s,\upsilon)\d s.
\label{long1}
\end{align}
Noting again that 
\begin{equation}
\alpha(r, \upsilon)+\frac{{\bJ}h (r,\upsilon)}{h (r,\upsilon)} =
\alpha(r,\upsilon) \int_V\frac{ h(r,\upsilon')}{h(r,\upsilon)}\pi(r,\upsilon,\upsilon')\d\upsilon'
\label{Th/h}
\end{equation}
and recalling the definition \eqref{J_h}, 
plugging \eqref{Th/h} back into \eqref{long1}, we get 
\begin{align}
\psi^h_t[g](r,\upsilon)  &= 
g(r+\upsilon t,\upsilon)+\int_0^{t\wedge \kappa_{r,\upsilon}^{D}} \bJ_h \psi^h_{t-s}[g](r+\upsilon s,\upsilon)\d\upsilon'\notag\\
&=\U_t[g](r,\upsilon) + \int_0^{t} \U_s\big[\bJ_h \psi^h_{t-s}[g]\big](r,\upsilon)\d\upsilon'.
\label{h-semigroup}
\end{align}
The equation \eqref{h-semigroup} together with the semigroup property shows that the change of measure \eqref{hCOM}  makes $(R,\Upsilon)$ under $\mathbf{P}^h_{(r,\upsilon)}$, $r\in D, \upsilon\in V$  the law of a NRW whose expectation semigroup is formally associated to the infinitesimal generator \eqref{genaction}.

 \smallskip
 
 To verify conservativeness, it suffices to take $g = 1$ in \eqref{h-semigroup} and note that, since $\bJ 1 = 0$, then $\psi^h_t[g](r,\upsilon) = 1$ is a solution to \eqref{h-semigroup} for all $t$. Similarly to the mild equation \eqref{mild}, the solution to \eqref{h-semigroup} is unique (thanks to a simple application of Gr\"onwall's Lemma), see for example \cite{SNTE}. Thus we have conservativeness.
  \end{proof}

As a small side remark, we note that the representation in Theorem  \ref{doobhthrm} is an alternative form of Feynman-Kac representation which works with multiplicative potentials instead of exponential additive potentials. Examples where this has been used for other Markov systems are discussed in \cite{CS, KP}.

\section{Complexity analysis  of $h$-NRW Monte Carlo and the shape of $h$}\label{h-complexity}
Under the assumptions of Theorem \ref{doobhthrm} (not necessarily including \eqref{inf}) and Remark \ref{convenient} in particular through the representation \eqref{4th},  we see a third approach to simulating $\lambda_*$. More precisely, 
with the estimate \eqref{lambda} 
we have the alternative estimator 
\begin{equation}
\Psi_k[g](t, r, \upsilon) = \Psi^{h\texttt{-rw}}_k[g](t, r, \upsilon) =h(r,\upsilon) \frac{1}{k}\sum_{i= 1}^k  
\exp\left(\int_0^t  \frac{{\bL}h (\texttt{r}_s^i, \texttt{v}_s^i)}{h (\texttt{r}_s^i, \texttt{v}_s^i)} +\beta(\texttt{r}_s^i, \texttt{v}_s^i)\d s\right)
\frac{g(\texttt{r}_t^i, \texttt{v}_t^i)}{h(\texttt{r}_t^i, \texttt{v}_t^i)} \mathbf{1}_{(t <\texttt{t}_{\rm end}^i)}.
\label{Psihrw}
\end{equation}
where $((\texttt{r}_t^i, \texttt{v}_t^i), t\geq 0)$ are independent copies of an $\alpha^h\pi^h$-NRW, with
\begin{equation}
\alpha^h(r,\upsilon)\pi^h(r,\upsilon,\upsilon') = \alpha(r,\upsilon)\frac{h(r,\upsilon')}{h(r,\upsilon)}\pi(r,\upsilon,\upsilon'), 
\qquad r\in D, \upsilon,\upsilon'\in V.
\label{ahpih}
\end{equation}
(See Algorithm  \ref{alg2} in the Appendix.)

\smallskip

Following the analysis in Section \ref{NRWMCCA}, we use similar calculations to control the variance and cost of the estimator associated to \eqref{Psihrw}. For variance control, we have the following result, whose conclusion does not depend on the sign of $\lambda_*$.

\begin{theo}[$h$-NRW Monte Carlo  convergence for $\lambda_*$]\label{hNRWvar} Suppose that  the assumptions of Theorem 
\ref{doobhthrm}, including \eqref{inf}, and of Remark \ref{convenient} hold. Additionally, suppose
\begin{equation}
 \sup_{r\in D,\upsilon\in V}\frac{\ebL h(r,\upsilon)}{h(r,\upsilon)}<\infty.
\label{supLh/h}
\end{equation}
 Then for $g\in L^+_\infty(D\times V)$ such that $g\leq h$, there exists a constant $\kpt{2}>0$ such that  
\begin{equation}
\mathbf{E}_{(r,\upsilon)}^h \Big[\Big(\left(\Psi^{h\texttt{\emph {-rw}}}_k[g](t, r, \upsilon)\right)^{1/t}-\me^{\lambda_{\ast}}\Big)^{2}\Big]\le
\frac{\kpt{2}{\rm e}^{(\lambda_2 -2\lambda_*)t}}{k} + \frac{\kp{0}}{t^2}, 
\label{minmise''}
\end{equation}
as $t\to\infty$, where $ \lambda_* + \underline{\varsigma} \leq \lambda_2 \leq \lambda_* + \overline{\varsigma}$ and
\begin{equation}
-\infty\leq  \inf_{r\in D,\upsilon\in V}\frac{(\ebL +\beta) h(r,\upsilon)}{h(r,\upsilon)}=:\underline{\varsigma}\leq \overline{\varsigma} : = \sup_{r\in D,\upsilon\in V}\frac{(\ebL +\beta) h(r,\upsilon)}{h(r,\upsilon)}.
 \label{betaprimes}
\end{equation}

\end{theo}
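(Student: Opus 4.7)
My plan is to mirror the bias-variance decomposition used in the proof of Theorem~\ref{NRWvar}, the key new content lying in the analysis of the second moment under the $h$-transformed measure. Write $\bar X = \Psi^{h\texttt{-rw}}_k[g](t,r,\upsilon)$ for the Monte Carlo estimator of $\mu_t := \psi_t[g](r,\upsilon)$; by the representation \eqref{4th} we have $\mathbf{E}^h_{(r,\upsilon)}[\bar X] = \mu_t$, and $\bar X$ is a $k$-fold i.i.d.\ average of a random variable that we denote $Z$. The elementary inequality
\[
\bigl(\bar X^{1/t} - \mathrm{e}^{\lambda_*}\bigr)^2 \;\leq\; 2\bigl(\bar X^{1/t} - \mu_t^{1/t}\bigr)^2 + 2\bigl(\mu_t^{1/t} - \mathrm{e}^{\lambda_*}\bigr)^2
\]
separates the problem into a deterministic bias-type term and a stochastic term. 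The deterministic term yields the contribution $\kappa_{[0]}/t^2$ by the uniform spectral expansion \eqref{spectralexpsgp} combined with a first-order Taylor expansion of $x\mapsto x^{1/t}$ around $C_{[0]}\mathrm{e}^{\lambda_* t}$, exactly as in the proof of Theorem~\ref{cor:var}.

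For the stochastic term, a mean-value argument based on the concavity of $x\mapsto x^{1/t}$ (the same one used in Theorem~\ref{NRWvar}) gives
\[
\mathbf{E}^h_{(r,\upsilon)}\bigl[(\bar X^{1/t} - \mu_t^{1/t})^2\bigr] \;\lesssim\; \frac{\mu_t^{2/t-2}}{t^2\,k}\,\mathbf{E}^h_{(r,\upsilon)}[Z^2],
\]
and since Theorem~\ref{CVtheorem} gives $\mu_t^{2/t-2}\sim C_{[0]}^{-2}\mathrm{e}^{-2\lambda_* t}$, the whole task reduces to controlling the growth rate of the second moment $\mathbf{E}^h_{(r,\upsilon)}[Z^2]$. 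Squaring the summand in \eqref{Psihrw}, exploiting the algebraic identity $\tfrac{\bL h}{h}+\beta=\varsigma$, and then inverting the change of measure \eqref{hCOM2} back to the law $\mathbf{P}$ of the underlying $\alpha\pi$-NRW produces
\[
\mathbf{E}^h_{(r,\upsilon)}[Z^2] \;=\; h(r,\upsilon)\,\mathbf{E}_{(r,\upsilon)}\Bigl[\mathrm{e}^{\int_0^t(\beta+\varsigma)(R_s,\Upsilon_s)\,\mathrm{d}s}\,\frac{g^2(R_t,\Upsilon_t)}{h(R_t,\Upsilon_t)}\mathbf{1}_{(t<\tau^D)}\Bigr].
\]
This is the Feynman--Kac semigroup of the original NRW with \emph{enhanced} potential $\beta+\varsigma$ applied to $g^2/h$, and the latter is a bounded nonnegative function precisely because of the assumption $g\leq h$.

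To finish, I invoke the upgraded Perron--Frobenius decomposition noted in Remark~\ref{CVtheoremupgrade}: since \eqref{supLh/h} combined with the boundedness of $\beta$ implies that $\beta+\varsigma$ is bounded above, the modified semigroup admits a leading eigenvalue $\lambda_2$ with associated bounded right and left eigenfunctions $\varphi_2,\tilde\varphi_2$, giving $\mathbf{E}^h_{(r,\upsilon)}[Z^2]\sim h(r,\upsilon)\,\varphi_2(r,\upsilon)\,\langle\tilde\varphi_2,g^2/h\rangle\,\mathrm{e}^{\lambda_2 t}$. Assembling the pieces and absorbing $1/t^2$ into the exponential constant (which is harmless once one observes, via Cauchy--Schwarz, that $\lambda_2\geq 2\lambda_*$) yields the stated estimate. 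The bounds $\lambda_*+\underline\varsigma\leq\lambda_2\leq\lambda_*+\overline\varsigma$ then come from the monotonicity of the Perron eigenvalue in the potential: the pointwise inequalities $\beta+\underline\varsigma\leq\beta+\varsigma\leq\beta+\overline\varsigma$ pass to the semigroups and hence to their leading eigenvalues, the extremal semigroups being simply $\mathrm{e}^{\underline\varsigma t}\psi_t$ and $\mathrm{e}^{\overline\varsigma t}\psi_t$. The main obstacle is verifying that the modified semigroup really falls within the scope of Remark~\ref{CVtheoremupgrade}; this is exactly why assumption \eqref{supLh/h} is imposed, so as to guarantee an upper bound on $\varsigma$, whereas no lower bound is needed, which is why the statement tolerates $\underline\varsigma=-\infty$.
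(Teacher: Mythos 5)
Your argument is correct and follows essentially the same route as the paper's proof: the decomposition \eqref{powerestimate}, reduction to the second moment of a single sample, inversion of the change of measure \eqref{hCOM2} to exhibit a Feynman--Kac semigroup for the original NRW with potential $\beta+\varsigma=\tfrac{\ebL h}{h}+2\beta$ (bounded above by \eqref{supLh/h} and (H1)/(H4)), the Perron--Frobenius asymptotics of Remark \ref{CVtheoremupgrade} to produce $\lambda_2$, and the monotonicity-in-the-potential comparison with ${\rm e}^{\underline\varsigma t}\psi_t$ and ${\rm e}^{\overline\varsigma t}\psi_t$ (the exact analogue of Lemma \ref{interlem}) for the bounds on $\lambda_2$. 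The only cosmetic deviations are that the paper bounds $g^2/h^2\le 1$ before changing measure (so its test function is $h$ rather than your $g^2/h$), and that the extra $1/t^2$ in your intermediate stochastic bound is not actually delivered by the concavity estimate for $t\ge 1$ --- but since you discard it when assembling the final bound, the conclusion is unaffected.
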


Next we turn our attention to the cost analysis in the spirit of Lemma \ref{thm: costrw}.

\begin{lem}[$h$-NRW Expected simulation cost]\label{thm: costrwh} Suppose that the conditions of Theorem \ref{hNRWvar} are met and additionally that $\underline\varsigma>-\infty$. Then, for $f\in L^+_\infty(D\times V)$
  and some constant $K_f>0$,
  \[
    \limsup_{t\to\infty}{\rm e}^{-(\lambda_*-\underline\varsigma)t}\mathbf{E}_{(r,\upsilon)}^h\big[C_{t}[f]\big]
    <K_f,.
  \]
  Moreover, $\mathbf{E}_{(r,\upsilon)}^h\big[C_{t}[f]\big]$ grows at most linearly in $t$ if either $\underline\varsigma = \lambda_*$, or if the following are satisfied:
  \begin{enumerate}
  \item[(i)] $f\in L^+_\infty(D\times V)$ satisfies $\pi^h[f](r,v) \ge c_0> 0$ for all $(r,v) \in D \times V$;
  \item[(ii)] there exist $\Omega_0 \subseteq D \times V$, $p_0 >0$ and $\delta >0$ such that
    \begin{enumerate}
    \item $\int_{(r,v') \in \Omega_0} \pi^h(r,v,v') \, \d v' \ge p_0$;
    \item $\alpha^h(r+vs,v) \le \delta^{-1}$ for all $s \in [0,\delta]$, for all $(r,v) \in \Omega_0$.
    \end{enumerate}
  \end{enumerate}
\end{lem}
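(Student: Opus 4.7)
The plan is to pass from the cost expectation under $\mathbf{P}^h$ to the NTE semigroup $\psi_s$ via the representation~\eqref{4th}, and to conclude using the spectral asymptotics of Theorem~\ref{CVtheorem}. Under the assumptions in force the $h$-NRW is conservative, so $\tau^D = \infty$ almost surely under $\mathbf{P}^h$. Mimicking the derivation of~\eqref{AcostRW}, I would first write $C_t[f] = A_t + M_t$ under $\mathbf{P}^h_{(r,\upsilon)}$, where $M$ is a martingale and
\begin{equation*}
A_t = \int_0^t \alpha^h\pi^h[f](R_s,\Upsilon_s)\,\d s,
\end{equation*}
then use~\eqref{ahpih} to rewrite the integrand as $G(R_s,\Upsilon_s)/h(R_s,\Upsilon_s)$ with
\begin{equation*}
G(r,\upsilon) := \alpha(r,\upsilon)\int_V f(r,\upsilon')h(r,\upsilon')\pi(r,\upsilon,\upsilon')\,\d\upsilon' \in L^+_\infty(D\times V).
\end{equation*}

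Next I would apply~\eqref{4th} with $g = G$ and, using the lower envelope $(\ebL+\beta)h/h \ge \underline\varsigma$ from~\eqref{betaprimes}, strip the exponential to obtain
\begin{equation*}
\mathbf{E}^h_{(r,\upsilon)}\!\left[\alpha^h\pi^h[f](R_s,\Upsilon_s)\right] \le \me^{-\underline\varsigma s}\,\frac{\psi_s[G](r,\upsilon)}{h(r,\upsilon)}.
\end{equation*}
Theorem~\ref{CVtheorem} bounds $\psi_s[G](r,\upsilon) \le C_G\,\me^{\lambda_* s}$ for a constant $C_G$ depending on $(r,\upsilon)$ and $G$, uniformly in $s \ge 0$. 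Integrating in $s$ and taking the $\limsup$ after normalising by $\me^{(\lambda_* - \underline\varsigma)t}$ gives the first claimed bound, with $K_f$ expressible in terms of $C_G/(h(r,\upsilon)(\lambda_*-\underline\varsigma))$ when $\lambda_* > \underline\varsigma$, and bounded or constant-integrand forms in the other two cases. In particular, when $\underline\varsigma = \lambda_*$ the integrand is uniformly bounded in $s$, so $\mathbf{E}^h[A_t]$ grows at most linearly in $t$ directly.

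The second claim of linear growth under~(i)--(ii) needs a separate argument because, when $\underline\varsigma < \lambda_*$, the bound above is exponential. Here my plan is a Doeblin-type renewal estimate on the number of scatter events $N_t$ in $[0,t]$. Condition~(ii)(a) ensures that, after each scatter, the new velocity lands in $\Omega_0^{r}:=\{\upsilon' : (r,\upsilon') \in \Omega_0\}$ with conditional probability at least $p_0$. Condition~(ii)(b) then bounds the scatter rate along the post-scatter straight line trajectory by $\delta^{-1}$ for a duration $\delta$, so the subsequent inter-scatter waiting time exceeds $\delta$ with conditional probability at least $\me^{-1}$. Applying the strong Markov property at each scatter time $t_j$ and stochastically dominating the inter-scatter increments by an i.i.d.\ sequence of $\mathrm{Bernoulli}(p_0\me^{-1})$ trials thresholded at $\delta$, one obtains $\mathbf{E}^h_{(r,\upsilon)}[N_t] \le c_1 t + c_2$. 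Since $f \in L^+_\infty(D\times V)$, $C_t[f] \le \|f\|_\infty N_t$, whence the required linear upper bound. Condition~(i) controls the compensator from below by $c_0\int_0^t\alpha^h(R_s,\Upsilon_s)\,\d s$, so that the linear rate produced by the $N_t$ argument is in fact the correct scale of growth and not an artefact of $f$ being pathologically small relative to the natural scatter intensity.

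The principal obstacle is the stochastic-domination step: the inter-scatter times under $\mathbf{P}^h$ are not independent, so the coupling must be constructed one scatter at a time by conditioning on the post-scatter configuration at each $t_j$ and invoking the strong Markov property of the $h$-NRW inherited from Theorem~\ref{doobhthrm}. Executing this rigorously, rather than via the heuristic Bernoulli comparison sketched above, is where the bulk of the technical work lies.
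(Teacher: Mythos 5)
Your proof of the first assertion is essentially the paper's: both arguments reduce $\mathbf{E}^h_{(r,\upsilon)}[C_t[f]]$ to the compensator $\int_0^t\mathbf{E}^h_{(r,\upsilon)}[\alpha^h\pi^h[f](R_s,\Upsilon_s)]\,\d s$, rewrite the integrand as $G/h$ with $G=\alpha\,\pi[fh]$, undo the $h$-transform (the paper via \eqref{hCOM2}, you via \eqref{4th} --- the same identity), strip $\exp(-\int_0^s\tfrac{(\ebL+\beta)h}{h})\le\me^{-\underline\varsigma s}$ using \eqref{betaprimes}, and invoke Theorem \ref{CVtheorem} to get an integrand of order $\me^{(\lambda_*-\underline\varsigma)s}$, with the boundary case $\underline\varsigma=\lambda_*$ handled by noting the integrand is then bounded. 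For the second assertion you genuinely diverge from the paper. The paper keeps working with the compensator: it partitions $D\times V$ into level sets $U^f_n=\{\alpha^h\pi^h[f]\in[n,n+1)\}$, uses condition (i) to tie the cost intensity to the scatter rate on each $U^f_n$, and uses (ii) to argue that every scatter is followed, with probability $p_0$, by a sojourn of expected length $\epsilon$ in $\Omega_0$, so that $\sum_n n\,\mathbb{E}^h[\int_0^t\mathbf{1}_{U^f_n}]\le Ct$ --- an argument the authors themselves flag as approximate. You instead discard the compensator, bound $C_t[f]\le\norm{f}_\infty N_t$, and control $\mathbf{E}^h[N_t]$ by a Doeblin/renewal estimate: conditionally on each scatter, (ii)(a) puts the post-scatter configuration in $\Omega_0$ with probability at least $p_0$ and (ii)(b) then makes the next waiting time exceed $\delta$ with probability at least $\me^{-1}$, so an optional-stopping (Wald-type) comparison with Bernoulli$(p_0\me^{-1})$ trials gives $\mathbf{E}^h[N_t]\le \me\, t/(p_0\delta)+O(1)$. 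This is correct, arguably easier to make fully rigorous than the paper's level-set count, and shows that condition (i) is not actually needed for the upper bound (it only enters the paper's route, where the level sets of $\alpha^h\pi^h[f]$ must be comparable to those of $\alpha^h$); the price is that you lose the sharper information the compensator identity $\mathbf{E}^h[C_t[f]]=\mathbf{E}^h[A_t]$ carries about the actual growth rate, which is what condition (i) is there to pin down.
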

\begin{remark} \label{rem:hisphi} \rm
  Heuristically speaking, the upper bound in Theorem \ref{hNRWvar} is optimised by choosing $h = \varphi$. Indeed, in that setting, $(\bL + \beta)\varphi = \lambda_*\varphi$ on $L_2(D\times V)$. Hence, assuming this is a pointwise equality, it follows that $\lambda_2 = 2\lambda_*$, matching the lower bound for $\lambda_2$, thereby minimising the upper bound in \eqref{minmise''}. Note also that in the same setting, $\underline\varsigma = \lambda_*$ and hence the growth of the expected cost is linear.

  Of course, in practice the function $\varphi$ is generally unknown, and a quantity that we want to learn through Monte Carlo methods. In the context of importance sampling, knowing $\varphi$ would be equivalent to knowing the optimal weights for the simulated random variable.
\hfill$\diamond$\end{remark}

The sufficient condition \eqref{inf} is strongly indicative of linear behaviour of $h$ for directional derivatives towards the physical boundary of $D$. These features should be built into any first estimate of $h$. We also note that the main theorem in underlying theory, i.e. Theorem \ref{CVtheorem},  requires that the domain $D$ is convex. In other contexts, where the NTE is interpreted as an abstract Cauchy problem on a Banach space, it  sometimes additionally assumed that the cross sections on $D\times V$ are also piecewise continuous, or even more simply, piecewise constant. This is a very natural assumption for the application of nuclear reactor core design. See for example Figure \ref{cake} which shows a typical simulation in a slice of a nuclear reactor core which has convex symmetries.

\smallskip

Let us assume momentarily that $D$ is a convex polyhedron. That is to say, it is a domain bounded by a finite number of tangent planes. Below we will  construct two possible families of choices of $h$. Both are built using a combination of hyperplanes. 

\begin{example}[Urts functions]\label{Urtsexample1}\rm
  For each $\upsilon\in V$, one possible definition we can work with 
  \[
  h(r,\upsilon) =c\times {\rm dist}_{{\upsilon}/{|\upsilon|}}(r,\partial D),
  \]
   where, for $\omega\in \mathbb{S}_2$, ${\rm dist}_{\omega}(r,\partial D)$ is the distance from $r$ to the boundary of $D$ along the ray $(r + \omega t, t\geq 0)$ and $c>0$. Note that this is a piecewise linear function in $r$ for fixed $v$. Indeed, we can write this more precisely as
\[
h(r,\upsilon)=c\times \inf\{t>0: r+\frac{\upsilon}{|\upsilon|} t \in \partial D\} =  c|\upsilon|\inf\{s>0: r+\upsilon s \in \partial D  \} = c|\upsilon|\kappa^D_{r,\upsilon}.
\]
Note that 
\begin{equation}
 \frac{|\upsilon| (\kappa_{r,\upsilon}^D-s)}{h (r+\upsilon s,\upsilon)} =  
 \frac{\kappa_{r,\upsilon}^D-s}{c\kappa_{r+\upsilon s,\upsilon}^D} =\frac{1}{c}
 \label{fixedas1}
\end{equation}
and hence  \eqref{inf} is satisfied. 

\smallskip

In two dimensions this appears to take the form of the minimum of hyperplanes touching only the sides of $\partial D$ that $\upsilon$ is directed towards (i.e. only $r\in \partial D$ for which $\upsilon\cdot{\bf n}_{r}>0$). See the function on the left in Figure~\ref{urts}.

\begin{figure}[h!]
\includegraphics[width=0.215 \textwidth]{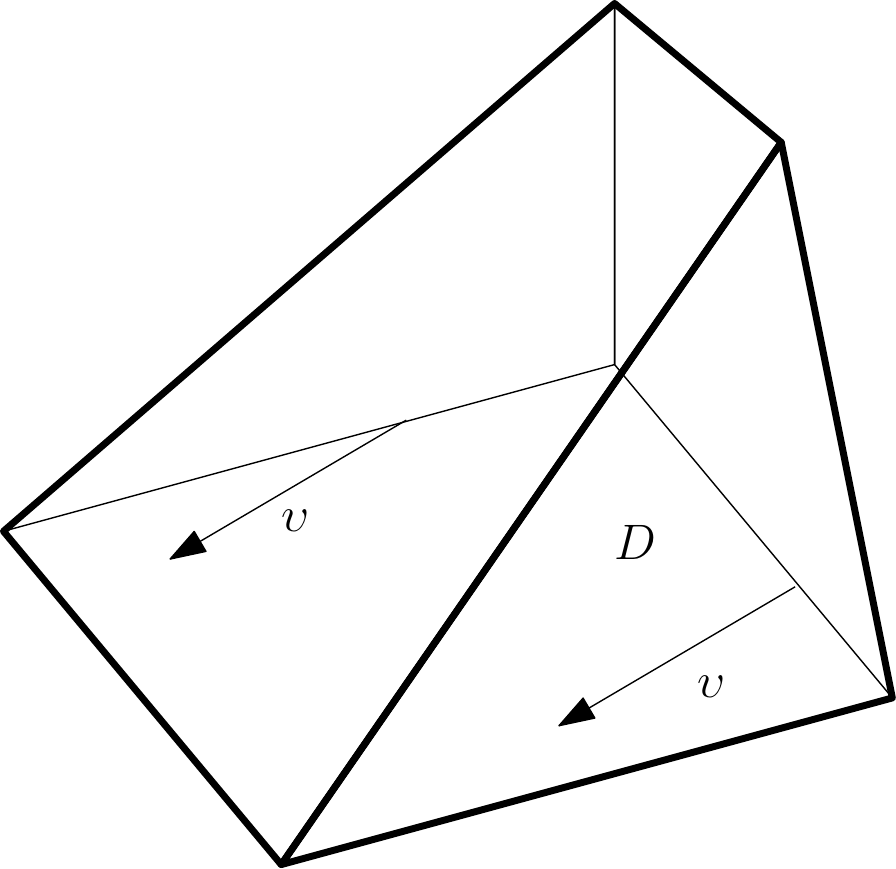}
\hspace{1cm}
\includegraphics[width=0.3 \textwidth]{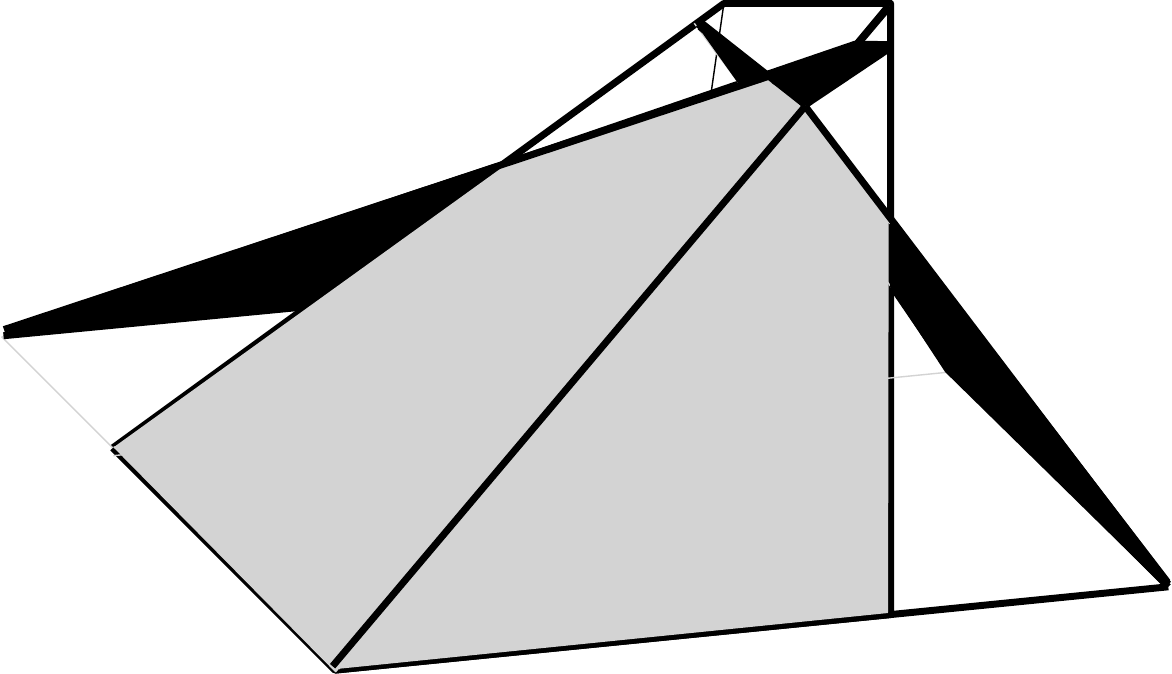}
\caption{On the left $h(r,\upsilon) = c_1{\rm dist}_{{\upsilon}/{|\upsilon|}}(r,\partial D)$, on the right, is the case of an Urts function $h(r,\upsilon) = c_1{\rm dist}_{{\upsilon}/{|\upsilon|}}(r,\partial D)\wedge c_2 {\rm dist}_{{\upsilon}/{|\upsilon|}}(r+ r_\upsilon ,\partial D)$. In each figure, the `base' represents the domain $D$, and the height represents the value of the function $h$.}
\label{urts}
\end{figure}

\smallskip

Recalling \eqref{J_h}, this proposal for $h$ thus imposes an instantaneous repulsive rate for $r$ close to boundary points $r'\in \partial D$ such that $\upsilon\cdot{\bf n}_{r'}>0$. However if $r$ is positioned far from the aforesaid boundary points but close to one of the boundary points $r'\in \partial D$ such that $\upsilon\cdot{\bf n}_{r'}<0$, we should also expect a milder repulsion from this boundary. 
We can thus adjust our proposed approximation to $\varphi$ by choosing it to be such that it receives a gentler repulsion from the boundary which is diametrically opposed to the direction of travel. We thus propose a first estimate of $\varphi$ taking the linear form 
\begin{equation}
h(r,\upsilon) =\min\{c_1
 {\rm dist}_{{\upsilon}/{|\upsilon|}}(r,\partial D)\, ,\,c_2 {\rm dist}_{-{\upsilon}/{|\upsilon|}}(r+ r_\upsilon ,\partial D)\},
\label{2consts}
\end{equation}
where 
$r_\upsilon$ can be dependent on $\upsilon$ or just taken as a fixed constant 
and $c_1,c_2>0$ are constants.
The role of $r_\upsilon$ is to provide the `gentle repulsion' away from the boundaries as alluded to above.
 The second term in the minimum is a simple measure of how far the boundary might be after scattering, although naturally more sophisticated choices could be made, at the expense of more complexity in subsequent calculations.

\medskip

We call this family of approximations for $\varphi$ Urts functions, named after the teepee-like dwellings of the Mongolian Siberian Tsaatan tribes which are similarly shaped when $D$ is a two-dimensional convex polyhedral domain; see Figure \ref{urts}. As before, it is easy to verify that $\inf_{r\in D}\int_V h(r,\upsilon')\d\upsilon'>0$.
\hfill$\diamond$\end{example}


Unfortunately, neither of the previous two examples necessarily respect the condition \eqref{supLh/h}. To see why, 
suppose we consider the setting of an Urts function with $c_1 = c_2 = 1$. For \eqref{supLh/h} to hold, we would need, in particular, that, for $r\in \partial D$ and $\upsilon\in V$ such that $\upsilon\cdot {\bf n}_r>0$ (i.e. the velocity direction of $\upsilon$ is pointing {\it out} of the domain $D$), the limit
\begin{align}
&\lim_{s\to0}\frac{ \bT h(r-s\upsilon/|\upsilon|, \upsilon) + \bJ h(r-s\upsilon/|\upsilon|,\upsilon)}{h(r-s\upsilon/|\upsilon|,\upsilon)}\notag\\
& =  \lim_{s\to 0}\left( \alpha(r, \upsilon )\int_{V}\frac{h(r-s\upsilon/|\upsilon|,\upsilon')}{h(r-s\upsilon/|\upsilon|,\upsilon)} \pi(r,\upsilon,\upsilon')\d\upsilon'  -\frac{|\upsilon|}{h(r-s\upsilon/|\upsilon|,\upsilon)} \right)- \alpha(r,\upsilon)\notag\\
&=\lim_{s\to 0}\left( \alpha(r, \upsilon )\int_{V}\frac{1}{s} h(r-s\upsilon/|\upsilon|,\upsilon')\pi(r,\upsilon,\upsilon')\d\upsilon'  -\frac{|\upsilon|}{s} \right)- \alpha(r,\upsilon)\label{blowdown}
\end{align}
needs to exist and be finite. Observe that in the last step we used the form of the function $h$ to deduce that $h(r-s\upsilon/|\upsilon|,\upsilon) = s$ for $s$ small, $r\in \partial D$ and $\upsilon\in V$ such that $\upsilon\cdot {\bf n}_r>0$. Note that this is potentially problematic because for $\upsilon'$ in the integral on the right-hand side above such that $\upsilon'\cdot {\bf n}_r <0$, we have $h(r,\upsilon')>0$, and hence the factor $1/s$ means that, in the limit as $s\downarrow0$, if it exists, can be explosive. Indeed, on the one hand, for $s$ sufficiently close to zero,
\[
\lim_{s\to0}\int_{\{\upsilon'\cdot {\bf n}_r>0\}} \frac{h(r-s\upsilon/|\upsilon|,\upsilon')}{h(r-s\upsilon/|\upsilon|,\upsilon)} \pi(r,\upsilon,\upsilon')\d\upsilon'
 = \lim_{s\to0}\int_{\{\upsilon'\cdot {\bf n}_r>0\}} \frac{ (\upsilon\cdot {\bf n}_r)|\upsilon'|}{  (\upsilon'\cdot {\bf n}_r) |\upsilon|} \pi(r,\upsilon,\upsilon')\d\upsilon'<\infty.
\]
On the other hand, for $\upsilon'$ such that $\upsilon'\cdot  {\bf n}_r<0$, we have that $h(r,\upsilon')>0$ and hence, for $s$ sufficiently small, 
\[
\int_{\{\upsilon'\cdot  {\bf n}_r<0\}} \frac{h(r-s\upsilon/|\upsilon|,\upsilon')}{h(r-s\upsilon/|\upsilon|,\upsilon)} \pi(r,\upsilon,\upsilon')\d\upsilon'> \frac{C}{s},
\]
for some constant $C>0$, which explodes as $s\to 0$. On the other hand, so long as the supremum in \eqref{blowdown} is uniformly bounded above (i.e. the limit could be $-\infty$), then \eqref{supLh/h} is respected.


\begin{example}[Lifted Urts functions]\rm 
The previous discussion indicated a possible way of building an Urts function such that the requirement \eqref{supLh/h} is satisfied. Another way to do this is to violate the zero boundary condition that $h(r,\upsilon) = 0$ when $\upsilon \cdot {\bf n}_r>0$ and to take the Urts function of Example \ref{Urtsexample1} and add a small constant to it. That is to say, 
\[
h(r,\upsilon) =\varepsilon + \min\{
 c_1{\rm dist}_{{\upsilon}/{|\upsilon|}}(r,\partial D)\, ,\, c_2{\rm dist}_{-{\upsilon}/{|\upsilon|}}(r+ r_\upsilon ,\partial D)\},
\]
for some $0<\varepsilon\ll 1$ and $c_1,c_2>0$. For a lifted Urts function, the condition \eqref{inf} is thus not satisfied and, as a consequence, not all of the $h$-NRW will survive until the chosen time horizon.
\hfill$\diamond$\end{example}

\begin{lem}
  Suppose that the domain $D \subset \R^2$ is bounded by a regular polygon or circle, and $h$ is one of the urts functions described above. Then the conditions of (ii) of Lemma~\ref{thm: costrwh} are satisfied; in particular, the cost $\mathbf{E}_{(r,\upsilon)}^h\big[C_{t}[f]\big]$ will grow at most linearly whenever $f$ is uniformly bounded below by a strictly positive constat.
\end{lem}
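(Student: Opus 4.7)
The plan is to take $\Omega_0 := B' \times V$ for a suitably chosen open ball $B'$ compactly embedded in $D$, and verify each of the two sub-conditions of (ii), together with the auxiliary condition (i) of Lemma~\ref{thm: costrwh} (which is what justifies the hypothesis that $f$ is uniformly bounded below). Condition (i), $\pi^h[f](r,v) \ge c_0 > 0$, is immediate: $\pi^h(r, v, \cdot)$ is a probability density on $V$, so the hypothesis $f \ge f_{\min} > 0$ yields $\pi^h[f](r,v) \ge f_{\min}$ uniformly. Condition (a) within (ii) holds trivially with $p_0 = 1$: for $(r,v) \in \Omega_0$ we have $r \in B'$, hence $\{v' \in V : (r,v') \in \Omega_0\} = V$ and $\int_V \pi^h(r,v,v')\,\d v' = 1$.

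For condition (b) I would pick a larger open ball $B$ with $\bar{B}' \subset B$ and $\bar{B} \subset D$, so that any linear trajectory $s \mapsto r + vs$ starting from $r \in B'$ with $|v| \le \upsilon_{\texttt{max}}$ remains in $B$ for $s \in [0,\delta]$, provided $\delta \le {\rm dist}(B', \partial B)/\upsilon_{\texttt{max}}$. On $B \times V$ I would then establish a uniform lower bound $h(r,v) \ge c_h > 0$. Combined with the trivial upper bound $h \le H < \infty$ coming from the definition of the Urts family, and the boundedness of $\alpha$ from (H1), the identity
\[
\alpha^h(r,v) = \alpha(r,v) \int_V \frac{h(r,v')}{h(r,v)} \pi(r,v,v') \,\d v'
\]
yields a uniform bound $\alpha^h \le \overline{\alpha} H/c_h =: M$ on $B \times V$. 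Taking $\delta := \min\{M^{-1},\,{\rm dist}(B',\partial B)/\upsilon_{\texttt{max}}\}$ then makes condition (b) hold.

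The main work is therefore the uniform lower bound $h \ge c_h$ on $B \times V$, which is where the choice of Urts function and the geometry of $D$ enter. For the lifted Urts function the added constant $\varepsilon$ gives this for free. For the basic Urts $h(r,\upsilon) = c\,{\rm dist}_{\upsilon/|\upsilon|}(r,\partial D)$, it follows from the elementary observation that ${\rm dist}_\omega(r,\partial D) \ge {\rm dist}(r,\partial D) \ge {\rm dist}(B,\partial D) > 0$ for every $r \in B$ and every direction $\omega$, since any straight-line distance from $r$ to $\partial D$ is at least the perpendicular distance. For the two-sided Urts \eqref{2consts} the first term is bounded below by the same reasoning, and the second term $c_2\,{\rm dist}_{-\upsilon/|\upsilon|}(r + r_\upsilon,\partial D)$ is handled by exploiting the symmetry of the regular polygon or disc: one chooses $r_\upsilon$ (for instance a small vector toward the centroid of $D$, or simply a fixed small constant) so that the shifted set $\{r + r_\upsilon : r \in B\}$ remains compactly embedded in $D$ uniformly in $\upsilon \in V$. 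The main obstacle is this uniform-in-$\upsilon$ compatibility, but the rotational (or $n$-fold dihedral) symmetry of circles and regular polygons ensures that the centre of $D$ is equidistant from all boundary components, so a fixed $r_\upsilon$ of sufficiently small magnitude keeps the shifted ball away from $\partial D$ for every $\upsilon$, delivering the required bound and completing the verification.
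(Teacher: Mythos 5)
There is a genuine gap, and it lies in your verification of condition (ii).(a). You take $\Omega_0 = B' \times V$ for a ball $B'$ compactly embedded in $D$, and claim (a) holds "trivially with $p_0=1$" because for $r \in B'$ the velocity section of $\Omega_0$ is all of $V$. But condition (a), as it is actually used in the proof of Lemma~\ref{thm: costrwh}, must hold for scatter events occurring at \emph{arbitrary} $(r,v)$ — specifically at configurations in the sets $U^f_n = \{(r,v) : \alpha^h\pi^h[f] \in [n,n+1)\}$, which for large $n$ are precisely the positions near $\partial D$ where $h(r,v)$ is small and $\alpha^h$ blows up. A scattering event changes only the velocity, not the position, so for $r \notin B'$ the set $\{v' : (r,v') \in \Omega_0\}$ is empty and $\int_{(r,v')\in\Omega_0}\pi^h(r,v,v')\,\d v' = 0$. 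Your $\Omega_0$ therefore fails (a) exactly in the high-cost region where the lemma needs it: a particle near the boundary cannot scatter "into" $B'\times V$, because it cannot teleport its position. The requirement is that the $r$-section of $\Omega_0$ be a non-null set of velocities for \emph{every} $r \in D$; any $\Omega_0$ whose position-projection omits a neighbourhood of $\partial D$ is unusable.

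The paper's construction is designed around exactly this point: $\Omega_0$ is a union of four sets, each a rectangle (the square minus an $\eta$-strip along one or two sides) crossed with a cone of velocities pointing \emph{away} from the nearby boundary, and the four rectangles together cover all of $D$. Thus from any position, including positions arbitrarily close to $\partial D$, there is a cone of outgoing directions of fixed angular width in which the boundary is at distance at least (comparable to) $\eta$, whence $h$ is bounded below on $\Omega_0$ and the scatter kernel $\pi^h(r,v,v') = h(r,v')\pi(r,v,v')/\int_V h(r,v'')\pi(r,v,v'')\,\d v''$ assigns that cone mass at least $p_0>0$ uniformly in $(r,v)$. Your verifications of condition (i) and of (ii).(b) (the uniform lower bound on $h$ on a set compactly embedded in $D$, the resulting upper bound on $\alpha^h$, and the trajectory staying in a good set for a short time $\delta$) are sound and would transfer essentially unchanged to the correct $\Omega_0$ — the lower bound ${\rm dist}_\omega(r,\partial D)\ge {\rm dist}(r,\partial D)$ and the handling of the shift $r_\upsilon$ are fine — but the choice of $\Omega_0$ itself must be repaired before the argument goes through.
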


\begin{proof}
  For simplicity, we consider the case where $D = [-L,L]\times[-L,L]$. The proof in other cases is similar.

  Fix $\eta>0$, and define
  \begin{align*}
    \Omega_1 & := ([-L,L-\eta]\times [-L,L-\eta]) \times \{ v \in V: v/|v| = (\cos(\theta),\sin(\theta)), \theta \in [\pi/8,3\pi/8]\},\\
    \Omega_2 & := ([-L+\eta,L]\times [-L,L-\eta]) \times \{ v \in V: v/|v| = (\cos(\theta),\sin(\theta)), \theta \in [5\pi/8,7\pi/8]\}\\
    \Omega_3 & := ([-L+\eta,L]\times [-L+\eta,L]) \times \{ v \in V: v/|v| = (\cos(\theta),\sin(\theta)), \theta \in [9\pi/8,11\pi/8]\}\\
    \Omega_4 & := ([-L,L-\eta]\times [-L+\eta,L]) \times \{ v \in V: v/|v| = (\cos(\theta),\sin(\theta)), \theta \in [13\pi/8,15\pi/8]\}
  \end{align*}
  and $\Omega_0 := \Omega_1 \cup \Omega_2 \cup \Omega_3 \cup \Omega_4$. Then it follows from each of the definitions of the urts functions that $h$ is bounded below on $\Omega_0$, and bounded above on $D\times V$. Hence we deduce that (ii).(a) holds. Similarly, the fact that $h$ is bounded below on $\Omega_0$, and above on $D \times V$ guarantees that (ii).(b) holds.

  Note in particular that (ii).(a) guarantees also condition 1 of Lemma~\ref{thm: costrwh} when $f$ is bounded below, uniformly away from zero.
\end{proof}

We conclude with the corresponding version of Corollary~\ref{cor:NRWComplex}.

\begin{cor}[$h$-NRW Monte Carlo complexity]  Suppose all conditions of Theorem~\ref{hNRWvar} are satisfied, and we are in the case of Lemma~\ref{thm: costrwh} where $\mathbf{E}_{(r,\upsilon)}^h\big[C_{t}[f]\big]$ grows at most linearly. Then there is a choice of $k,t$ such that
  \begin{equation} \label{eq:Mex2}
    \mathbb{E}_{\delta_{(r,\upsilon)}} \Big[\Big(\left(\Psi^{\texttt{\emph{h-rw}}}_k[g](t, r, \upsilon)\right)^{1/t}-\me^{\lambda_{\ast}}\Big)^{2}\Big]\le \varepsilon^2.
  \end{equation}
  Moreover, \eqref{eq:Mex2} holds in the limit as $\eps \to 0$ with 
  \begin{equation*}
    \mathbb E_{\delta_{(r, \upsilon)}}\big[\emph{\texttt{Cost}}(k, t)\big]  \le C \exp\left( (\lambda_2-2\lambda_{\ast}) \sqrt{\kappa_{[0]}} \varepsilon^{-1}\right),
  \end{equation*}
  for some $C>0$, and the optimal choice of $k,t$ corresponds to $t \approx \sqrt{\kp{0}} \varepsilon^{-1}$ and $k \approx (\lambda_2-2\lambda_{\ast}) \sqrt{\kp{0}} \kpt{2} \varepsilon^{-3} \exp((\lambda_2-2\lambda_{\ast}) t)/2$.
 
\end{cor}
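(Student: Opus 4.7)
The plan is to follow the template of Corollary \ref{cor:NRWComplex}, which in turn mirrors part (iii) of Theorem \ref{thm:NBPComplexity}. First, I assemble the two ingredients. Theorem \ref{hNRWvar} supplies the mean-squared error estimate
\[
\mathbf{E}^h_{(r,\upsilon)}\!\left[\left(\left(\Psi^{h\texttt{-rw}}_k[g](t,r,\upsilon)\right)^{1/t} - \me^{\lambda_\ast}\right)^{2}\right] \;\le\; \frac{\kpt{2}\me^{(\lambda_2-2\lambda_\ast)t}}{k} + \frac{\kp{0}}{t^2},
\]
and Lemma \ref{thm: costrwh}, under the linear-growth hypothesis invoked in the corollary, produces a constant $C' > 0$ with $\mathbf{E}^h_{(r,\upsilon)}[C_t[f]] \le C'(1+t)$ uniformly in $f \in L^+_\infty(D\times V)$ with $\|f\|_\infty \le 1$. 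Combining these gives $\mathbb{E}[\texttt{Cost}(k,t)] \le C' k(1+t)$.

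Second, I would then pose the constrained optimization
\[
\min_{k,t>0}\; k(1+t) \quad \text{subject to} \quad \frac{\kpt{2}\me^{(\lambda_2-2\lambda_\ast)t}}{k} + \frac{\kp{0}}{t^2} \le \varepsilon^2,
\]
and attack it by the same Lagrangian method used in the proof of Theorem \ref{thm:NBPComplexity}(iii). The stationary conditions (to leading order, ignoring the polynomial $(1+t)$ factor) reproduce exactly the structure of the $\lambda_\ast<0$ optimization there, yielding $k \approx \tfrac12(\lambda_2-2\lambda_\ast)\sqrt{\kp{0}}\,\kpt{2}\varepsilon^{-3}\me^{(\lambda_2-2\lambda_\ast)t}$ and $t \approx \sqrt{\kp{0}}\,\varepsilon^{-1}$, the latter coming from balancing the bias term $\kp{0}/t^2$ against $\varepsilon^2$. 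Substituting back gives the total cost $C' k(1+t) \approx C\varepsilon^{-4}\exp((\lambda_2-2\lambda_\ast)\sqrt{\kp{0}}\,\varepsilon^{-1})$, and the polynomial factor $\varepsilon^{-4}$ is absorbed into the constant $C$ exactly as in the proof of Theorem \ref{thm:NBPComplexity}(ii)--(iii), where $\kp{0}$ was likewise defined only up to an $\eta$-slack that swallows polynomial corrections.

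Third, for the degenerate regime $\lambda_2 \le 2\lambda_\ast$ (which, although not the main case of interest, is permitted by Theorem \ref{hNRWvar} when $\underline\varsigma \le \lambda_\ast$), the exponential factor in the variance bound is bounded by $1$, so the claimed bound $C\exp((\lambda_2-2\lambda_\ast)\sqrt{\kp{0}}\varepsilon^{-1})$ is vacuous up to enlarging $C$, and the same choice of $t,k$ works simply by killing the bias term.

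The main obstacle will really just be bookkeeping: tracking how the extra $t$-factor in the cost objective (absent in Corollary \ref{cor:NRWComplex} because the NRW per-simulation cost was uniformly bounded in $t$) propagates through the Lagrangian computation. This turns out to be benign because $\me^{(\lambda_2-2\lambda_\ast)t}$ dominates any polynomial in $t$ as $\varepsilon \to 0$, so the leading-order optimum in $t$ is unchanged, and the $(1+t)$ contributes only an $\varepsilon^{-1}$ polynomial factor that disappears into $C$. Aside from this, the argument is a direct transcription of the $\lambda_\ast < 0$ case of Theorem \ref{thm:NBPComplexity}(iii) with $(\kpt{1},\lambda_1)$ replaced by $(\kpt{2},\lambda_2)$.
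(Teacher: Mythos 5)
Your proposal is correct and follows essentially the same route as the paper, which simply cites the argument of Corollary~\ref{cor:NRWComplex} (itself reducing to the Lagrangian optimisation in part (iii) of Theorem~\ref{thm:NBPComplexity}) with $(\kpt{1},\lambda_1)$ replaced by $(\kpt{2},\lambda_2)$. Your additional bookkeeping for the $(1+t)$ factor in the cost objective — observing that it contributes only a polynomial correction absorbed into $C$ — is a detail the paper leaves implicit, but it does not change the method.
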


The proof follows in a similar manner to Corollary~\ref{cor:NRWComplex}.

\section{Discussion and numerical comparison of methods}\label{discussion}
\subsection{Relation to Existing Monte Carlo methods}

There are many interesting connections between the methods we have discussed in this paper and existing Monte Carlo methods discussed in the literature. Of particular relevance for our discussion (and in particular in comparison with the NBP and NRW methods described), are various Monte Carlo methods which have been proposed for simulating from the quasi-stationary distribution of a given Markov process. We highlight some relevant strands fo research below:
\begin{itemize}
\item A common approach to simulate from the QSD of a killed Markov process is to use \emph{Fleming-Viot} particle systems. In this approach, $N$ particles are simulated independently from a (killed) Markov process, up to the first time one of the particles is killed, and the killed particle is replaced by a copy of a randomly chosen surviving particle. See \eg{} \cite{burdzy_flemingviot_2000,groisman_simulation_2012,champagnat_convergence_2018}. Under fairly weak conditions, this process converges to the QSD as the simulation time, and the number of particles go to infinity. Such methods are not immediately applicable in the case of the NTE, however, since they generally require a condition on the particles that the killing times for independent particles cannot coincide. In general this is not true for the NTE. In addition, existing methods do not handle branching, although this can be incorporated into the NRW setting by increasing the killing rate when the branching rate is bounded above. Below we will also discuss how other approaches based on interacting particle systems may also be applied in our setting. 

\item Another, potentially interesting alternative approach to simulation from the QSD involves stochastic approximation approaches, \cite{benaim_stochastic_2018, mailler_stochastic_2020}. Again, these methods look to  simulate the QSD of a killed process. In this setting, rather than simulating many particles and using other particles to resurrect killed particles, killed particles are resurrected at some point in their history. In this manner, a single, potentially infinite trajectory is sampled, and a quantity based on the occupation measure (\cf{} Section~\ref{sec:occupation}) is used to estimate the QSD. Existing results are not directly applicable to our setting, either because the results are in discrete time, or diffusion models in continuous time (\cite{wang_approximation_2020}) however it would appear to be an interesting question for future work to see if these methods can be fruitfully adapted to the NTE setting.

\item There are a number of recent papers in the Markov Chain Monte Carlo (MCMC) literature which look to simulate from a given measure, typically motivated by applications in Bayesian statistics. In these applications, and unlike the NTE, the measure is generally known (at least, up to a normalising constant), and the challenge is to construct Markov processes which realise the given measure as their (quasi-)stationary distribution. In \cite{pollock_quasi-stationary_2020} the measure is constructed as the QSD of a diffusion model, and interacting particle methods applied to simulate from the QSD. Other approaches (\cite{vanetti_piecewise-deterministic_2018,pollock_quasi-stationary_2020}) highlight the benefit of using piecewise deterministic Markov processes (PDMPs) as an efficient method to rapidly converge to the QSD, and show how to efficiently constrict PDMPs which have a desired stationary measure. Since the NTE is an example of a PDMP, it would be interesting to understand the extent to which numerical methods for the NTE can borrow from, or inspire ideas for these novel MCMC methods.
\end{itemize}

\subsection{Towards Improved Monte Carlo Methods for the NTE}

The results of the previous section (in particular Remark~\ref{rem:hisphi}) suggests that extremely efficient methods for estimating $\lambda_*$ can be developed if the eigenfunction $\varphi$ is known. Unfortunately, finding the optimal $\varphi$ is generally at least as hard as finding the optimal $\lambda_*$, so this is not immediately helpful. However, importance sampling can still be highly effective in reducing the variance of the numerical scheme even when the function $\varphi$ is not known. The key question is how to construct informative functions $h$ which might approximate $\varphi$ well.
\smallskip

Moreover, in practical implementations, the naive approach suggested in Theorem~\ref{thm:NBPComplexity} is generally not amenable in practice. Considering the asymptotic results given here, one first observes that it is heavily desirable to be in the $\lambda_*=0$ regime, which has linear growth, rather than the cases when $\lambda_* \neq 0$, which both have exponential growth in complexity for a given level of accuracy. It follows that it is numerically beneficial to try to get the system to an equilibrium state, for example, by introducing additional killing or branching (although the optimal rate is of course unknown, it could, for example, be estimated numerically from simulations). Moreover, numerical methods which try to mollify excess particle deaths or births through repopulating from the existing population can also be effective at keeping the system close to equilibrium. Such methods lead naturally to considering interacting particle systems (e.g. \cite{del_moral_feynman-kac_2004}), and Sequential Monte Carlo (SMC) methods (\cite{DoucetSMC}). In most industrial-standard software\footnote{For example, MCNP \cite{werner_mcnp_2018}, MONK \cite{richards_recent_2019} Serpent \cite{leppanen_serpent_2015}, and Tripoli-4 \cite{brun_tripoli-4_2015}}, numerical implementations follow a modified version of classical SMC methods known as power iteration (\cite[Ch. 6.III]{lux_monte_1991}, \cite{brown_monte_2016}) where particles are randomly selected from a `birth store', representing the current particle population, with unused particles being stored for possible later use. The eigenvalue is estimated by adjusting the birth-rate in a manner that We are not aware of any rigorous theoretical justification or analysis of these methods, but it would seem that they would bear significant similarity to standard SMC methods, and we would expect similar theoretical behaviour.  \smallskip

The benefits of combining SMC methods with importance sampling has been investigated by (e.g. \cite{WhiteleyTwisted}). It is in this combination that we expect the results in Sections~\ref{importance} and \ref{h-complexity} to be of most benefit. Specifically, we would anticipate using SMC methods to sample/resample from a population of particles, which themselves undergo motion according to an $h$-transformed version of the NRW. We aim to write more about this in forthcoming work.
\smallskip

In practice, in the nuclear engineering literature, a large range of different variance reduction methods are used. One class of variance reduction methods appears to be closely related to the methods introduced in Sections~\ref{importance} and \ref{h-complexity}. In the nuclear engineering literature, these variance reduction methods are called \emph{zero-variance} calculations, see \cite{christoforou_zero-variance-based_2011}, building on earlier work stretching back to the 1940s. Most of the nuclear engineering literature considers zero-variance schemes in the context of shielding problems, where the goal is to estimate the neutron flux at a detection site, based on a fixed source of particles, and where fissile behaviour is typically not dominant; \cite{christoforou_zero-variance-based_2011} is a notable exception to this, although unlike the current paper, this paper considers the case of generational growth, not time-dependent growth, and the presentation is not mathematically rigorous. To the best of our knowledge, there is no work in the nuclear engineering literature which considers the zero-variance solutions to time-dependent criticality problems, although these are current areas of active research, see for example \cite{zoia_alpha_2014,faucher_variance-reduction_2019,mancusi_towards_2021}.

\smallskip

A further difficulty that arises in the use of $h$-transformed motion for the estimation of $\lambda_*$ is that the estimates can be dominated by large-deviation effects. Specifically, although the estimates remain unbiased, for large $t$ a substantial contribution to the expectation in \eqref{3rd} comes from rare particles with large weights (the exponential, product and indicator terms in the expression). For example, \cite{jack_ergodicity_2020}, in a slightly different setting, connects the limiting value $\lambda$ to a maximisation problem, which in our setting reads as:
\begin{equation*}
  \lambda_* \ge \lim_{T \to \infty}\left\{ \mathbb{E}^h\left[ \beta(R_T,\Upsilon_T)\right] - \frac{1}{T} \mathbb{E}^h\left[ \log \left( \left.\frac{\mathrm{d}\mathbb{P}^h}{\mathrm{d}\mathbb{P}}\right|_T\right) \right]\right\}.
\end{equation*}
Moreover, the optimal choice of $h = \varphi$ will attain equality in this expression. By manipulation of this expression, it should be possible to develop iterative schemes which improve $h$ to get better estimates of $\lambda_*$. In a different setting, this is the approach taken recently by \cite{HengControlledSMC}. We leave the complete discussion of how these methods may be implemented in the case of the NTE to future work.

\smallskip
In the rest of this section we provide some basic numerical experiments\footnote{The numerical simulations have all been implemented in Python. The code can be downloaded from \url{people.bath.ac.uk/mapamgc/}.}  that demonstrate the convergence rates observed in the previous sections. We concentrate on an analytically tractable one-dimensional case, as well as a two dimensional setting, which is not analytically tractable. We conclude with some discussion of the relative strengths and weaknesses of the methods.

\subsection{One dimensional slab reactor} We consider a particular example where $D$ is an open interval {\color{blue} c.f. \cite{christoforou_zero-variance-based_2011,zoia_alpha_2014}}.  Up to some linear transformation of the variables, we can always assume that $D=\{(r_1, r_2, r_3): -L<r_1<L\}$ for some fixed $(r_2, r_3)\in \mathbb R^2$ and $L\in (0, \infty)$. 
Clearly in this case $(\psi_t, t\geq 0)$ is merely a function of the first coordinate.  
We can therefore suppress the dependency on the second and third coordinates and reduce the problem entirely to its one dimensional form.  
In other words, we can reduce the entire problem to the setting $D = (-L,L)$.
\smallskip

The interest of this example lies in that computations of the leading eigenvalue and the associated eigenfunctions in this case are tractable so that we can compare numerical results with theoretical values. 
Throughout this section, we assume that
\begin{align}\label{eq:1d}
&    V=\{-\upsilon_0, \upsilon_0\}, \quad   \sigma_{\texttt{s}}(r, \upsilon)\equiv \sigma_{\texttt{s}}, \quad \sigma_{\texttt{f}}(r, \upsilon)\equiv \sigma_{\texttt{f}}, \\ \notag
& \qquad  \pi_{\texttt{s}}(r, \upsilon, \upsilon')=
\delta_{-\upsilon}(\d \upsilon')
, \quad  \pi_{\texttt{f}}(r, \upsilon, \d\upsilon')=2
\delta_{\upsilon}(\d \upsilon')
\end{align}
where $\upsilon_0, \sigma_{\texttt{s}}, \sigma_{\texttt{f}}\in (0, \infty)$. Note, as a small technical matter, because the velocity space $V$ consists of just two elements, we need to replace $\pi_{\texttt s}(r,\upsilon,\upsilon')\d\upsilon'$ by $\pi_{\texttt s}(r,\upsilon,d\upsilon')$ and a similar replacement for $\pi_{\texttt f}$. Observe that we have $\sigma=\sigma_{\texttt{s}}+\sigma_{\texttt{f}}$ and the two measures $\pi_{\texttt{f}}(r, \upsilon,\d  \upsilon')$ and $\pi_{\texttt{s}}(r, \upsilon, \d \upsilon')'$ are singular with respect to one another. Note that under 
Assumption \eqref{eq:1d} the operator $\bA = {\bT} + {\bS}+{\bF}$ takes the following simple form: 
\begin{equation}
\left\{
\begin{array}{rll}
{\bT}{f}(r, \upsilon)  &=  \upsilon\cdot\nabla{f}(r, \upsilon)   &\text{ (transport) }\\
&\\
{\bS}{f}(r, \upsilon) &= \sigma_{\texttt{s}}\big({f}(r, -\upsilon)-{f}(r, \upsilon)\big) &\text{ (scattering) }\\
&\\
{\bF}{f}(r, \upsilon) & =  \sigma_{\texttt{f}}\big( 2 {f}(r, \upsilon)-{f}(r, \upsilon)\big) =  \sigma_{\texttt{f}}{f}(r, \upsilon) &\text{ (fission)}
\end{array}
\right.
\label{1D_backwards_operators}
\end{equation}
for all $f\in L_2^+(D\times V)$ and $(r, v)\in D\times V$, where $p\in(1,\infty)$. 
\smallskip

The main condition of Theorem \ref{CVtheorem}
requires that $\sigma_{\texttt f}(2-1) = \sigma_{\texttt f}>0$, which is automatically the case. 
Moreover, Theorem 8.1 of \cite{MultiNTE} allows us to otherwise   identify 
  the eigentriple $(\lambda_\ast, \varphi, \tilde\varphi)$  via the relations
  \begin{equation}\label{eq:eigen}
\langle \tilde\varphi, \bA  g\rangle = \lambda_*\langle \tilde\varphi,  g\rangle\text{ and  }\langle f , \bA\varphi\rangle  = \lambda_* \langle f, \varphi \rangle,
\qquad f,g\in L^+_2(D\times V),
\end{equation}
where the eigenfunctions are identified this way only uniquely within $L_2^+(D\times V)$ (which is sufficient for our purposes to identify the eigenfunctions in $L_\infty^+(D\times V)$).

\begin{prop}\label{intervalprop}
The leading eigenvalue $\lambda_\ast$ and the associated eigenfunctions $\varphi, \tilde\varphi$ as specified in Theorem \ref{CVtheorem} are given as follows.
\begin{itemize}
\item[(i)]
If $\theta:={\upsilon_0}/{2L\sigma_{\mathtt{s}}}>1$, 
let $x_{\ast}$ be the unique non negative solution to the equation
\begin{equation}\label{eq:fixedpt1}
\frac{\sinh(x)}{x}=\theta. 
\end{equation}
Then the leading eigenvalue is given as 
\begin{equation}\label{eq:1D_eigen1}
\lambda_\ast=\sigma_{\mathtt{f}}-\sigma_{\mathtt{s}}-\sqrt{\sigma_{\mathtt{s}}^2+\big(\tfrac{\upsilon_0 x_\ast}{2L}\big)^2}. 
\end{equation}
And we have 
\[
\varphi(r, \upsilon)= \phi(r)\1_{\{\upsilon=\upsilon_0\}}+\phi(-r)\1_{\{\upsilon=-\upsilon_0\}}, \quad \tilde\varphi(r, \upsilon)= \phi(-r)\1_{\{\upsilon=\upsilon_0\}}+\phi(r)\1_{\{\upsilon=-\upsilon_0\}},
\]
where
\begin{equation}\label{eq:phi1}
\phi(r)=\frac{\sinh\big(\tfrac{1}{2} x_\ast(1-\frac{r}{L})\big)}{\sinh(\tfrac{1}{2} x_\ast)}, \qquad r\in (-L, L). 
\end{equation}
\item[(ii)]
If $\theta:={\upsilon_0}/{2L\sigma_{\mathtt{s}}}=1$, then the leading eigenvalue is given as 
\begin{equation}\label{eq:1D_eigen2}
\lambda_\ast=\sigma_{\mathtt{f}}-2\sigma_{\mathtt{s}}.  
\end{equation}
And we have
\[
\varphi(r, \upsilon)= \phi(r)\1_{\{\upsilon=\upsilon_0\}}+\phi(-r)\1_{\{\upsilon=-\upsilon_0\}}, \quad \tilde\varphi(r, \upsilon)= \phi(-r)\1_{\{\upsilon=\upsilon_0\}}+\phi(r)\1_{\{\upsilon=-\upsilon_0\}},
\]
where
\begin{equation}\label{eq:phi2}
\phi(r)=1-\frac{r}{L}\, , \qquad r\in (-L, L). 
\end{equation}
\item[(iii)]
If $\theta:={\upsilon_0}/{2L\sigma_{\mathtt{s}}}<1$, then let $x_{\ast}$ be the smallest non negative solution to the equation
\begin{equation}\label{eq:fixedpt2}
\frac{\sin(x)}{x}=\theta. 
\end{equation}
Then the leading eigenvalue is given as 
\begin{equation}\label{eq:1D_eigen3}
\lambda_\ast=\sigma_{\mathtt{f}}-\sigma_{\mathtt{s}}-\sgn(\cos(x_{\ast}))\sqrt{\sigma_{\mathtt{s}}^2-\big(\tfrac{\upsilon_0 x_\ast}{2L}\big)^2}. 
\end{equation}
And we have 
\[
\varphi(r, \upsilon)= \phi(r)\1_{\{\upsilon=\upsilon_0\}}+\phi(-r)\1_{\{\upsilon=-\upsilon_0\}}, \quad \tilde\varphi(r, \upsilon)= \phi(-r)\1_{\{\upsilon=\upsilon_0\}}+\phi(r)\1_{\{\upsilon=-\upsilon_0\}},
\]
where
\begin{equation}\label{eq:phi3}
\phi(r)=\frac{\sin\big(\tfrac{1}{2} x_\ast(1-\frac{r}{L})\big)}{\sin(\tfrac{1}{2}x_\ast)}, \qquad r\in (-L, L). 
\end{equation}
\end{itemize}
\end{prop}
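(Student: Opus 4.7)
Since $V=\{-\upsilon_0,\upsilon_0\}$ I will identify right eigenfunctions with pairs $(\varphi_+, \varphi_-) := (\varphi(\cdot, \upsilon_0), \varphi(\cdot, -\upsilon_0))$. Substituting the explicit form of $\bA = \bT + \bS + \bF$ from \eqref{1D_backwards_operators} into the right eigenequation $\langle f, \bA\varphi\rangle = \lambda_* \langle f, \varphi\rangle$ (read pointwise) yields the first-order linear system
\begin{align*}
\upsilon_0\,\varphi_+'(r) &= A\,\varphi_+(r) - \sigma_{\texttt{s}}\,\varphi_-(r),\\
-\upsilon_0\,\varphi_-'(r) &= A\,\varphi_-(r) - \sigma_{\texttt{s}}\,\varphi_+(r),
\end{align*}
on $(-L,L)$, where $A := \lambda_* + \sigma_{\texttt{s}} - \sigma_{\texttt{f}}$, with boundary conditions $\varphi_+(L)=\varphi_-(-L)=0$ inherited from \eqref{BC}. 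The reflection $r \mapsto -r$ swaps the two equations, so it is natural to look for a solution with $\varphi_-(r) = \varphi_+(-r) =: \phi(-r)$, which is automatically consistent with both equations and with the boundary data once $\phi(L)=0$.

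Under this ansatz the system reduces to the single functional identity $\upsilon_0\phi'(r) = A\phi(r) - \sigma_{\texttt{s}}\phi(-r)$, from which one differentiation and substitution eliminates $\phi(-r)$ and produces the second order linear ODE $\phi''(r) = \mu^2\phi(r)$ with $\mu^2 := (A^2-\sigma_{\texttt{s}}^2)/\upsilon_0^2$. Writing $\mu = x_*/(2L)$ in cases (i) and (iii), and $\mu=0$ in case (ii), the boundary condition $\phi(L)=0$ then forces (up to normalization) the three candidate forms stated in \eqref{eq:phi1}, \eqref{eq:phi2}, \eqref{eq:phi3}.

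To close the system I plug each candidate back into the first-order functional equation. Expanding $\sinh\bigl(\tfrac12 x_*(1\pm r/L)\bigr)$ (respectively $\sin$, and the affine form) via addition formulas and matching the coefficients of $\cosh(x_* r/(2L))$ and $\sinh(x_* r/(2L))$ (resp. their trigonometric analogues) yields, in case (i), the two relations
\[
A-\sigma_{\texttt{s}} = -\tfrac{\upsilon_0 x_*}{2L}\coth(\tfrac{x_*}{2}),\qquad
A+\sigma_{\texttt{s}} = -\tfrac{\upsilon_0 x_*}{2L}\tanh(\tfrac{x_*}{2}).
\]
Subtracting and using $\coth(y)-\tanh(y) = 2/\sinh(2y)$ yields the fixed-point equation \eqref{eq:fixedpt1}; multiplying yields $A^2 = \sigma_{\texttt{s}}^2 + (\upsilon_0 x_*/2L)^2$, and since both relations force $A+\sigma_{\texttt{s}}<0$ we must take the negative root, giving \eqref{eq:1D_eigen1} via $\lambda_* = A+\sigma_{\texttt{f}}-\sigma_{\texttt{s}}$. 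The trigonometric case (iii) proceeds identically but with $\coth,\tanh$ replaced by $-\cot,\tan$, resulting in $A = -\sigma_{\texttt{s}}\cos(x_*)$ and $A^2 = \sigma_{\texttt{s}}^2 - (\upsilon_0 x_*/2L)^2$, which is why the signed square root $\sgn(\cos(x_*))\sqrt{\;\cdot\;}$ appears in \eqref{eq:1D_eigen3}. Case (ii) corresponds to the degenerate matching $A+\sigma_{\texttt{s}}=0$ and drops out directly from substituting the linear $\phi$.

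Finally, the stated form of the left eigenfunction follows from the elementary observation that $\bS$ and $\bF$ are self-adjoint on $L_2(D\times V)$ while $\bT$ changes sign under the involution $\upsilon\mapsto -\upsilon$; hence $\tilde\varphi(r,\upsilon) := \varphi(r,-\upsilon)$ automatically satisfies $\langle\tilde\varphi, \bA g\rangle = \lambda_*\langle\tilde\varphi, g\rangle$ together with the correct adjoint boundary condition $\tilde\varphi(r,\upsilon)=0$ on $\partial D^-$. The main obstacle is checking that we have indeed captured the \emph{leading} eigenvalue rather than a subdominant one: positivity of $\varphi$ on $D\times V$, required by Theorem~\ref{CVtheorem}, forces us in case (iii) to choose the \emph{smallest} positive root of $\sin(x)/x=\theta$, since larger roots introduce interior zeros of $\sin\bigl(\tfrac12 x_*(1-r/L)\bigr)$ in $(-L,L)$; positivity together with the uniqueness of the Perron--Frobenius triple in Theorem~\ref{CVtheorem} then identifies $(\lambda_*,\varphi,\tilde\varphi)$ unambiguously as the triple we have constructed.
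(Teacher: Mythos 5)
Your proof is correct and arrives at the same transcendental equations and eigenvalue formulas as the paper, but it is organised differently in two respects. First, where the paper writes down the general solution of the $2\times 2$ system \eqref{eq:ode} with two free constants $C,C'$ and extracts the consistency conditions $C/C'=\pm 1$ together with $|2\alpha_\lambda/\sinh(2\alpha_\lambda L)|=2\sigma_{\texttt{s}}/\upsilon_0$ by comparing derivative matrices, you impose the reflection ansatz $\varphi_-(r)=\varphi_+(-r)$ from the outset, reduce to the scalar equation $\phi''=\mu^2\phi$, and recover the same relations by coefficient matching; this is a tidier calculation, but it only produces the symmetric ($C=C'$) branch of solutions. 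Second, and more substantively, the paper compensates for classifying \emph{all} solutions by identifying $\lambda_\ast$ as the supremum over the resulting finite candidate set $\Lambda$, whereas you exhibit a single positive candidate triple and invoke uniqueness of the Perron--Frobenius triple. Your route is legitimate and arguably cleaner --- the required uniqueness follows in one line by pairing your positive right eigenfunction against the positive left eigenmeasure of Theorem~\ref{CVtheorem} via \eqref{leftandright} --- but you should state that pairing argument explicitly, since Theorem~\ref{CVtheorem} as written asserts existence rather than uniqueness, and since your ansatz alone does not rule out non-symmetric or larger-root eigenpairs. Everything else --- the boundary conditions, the sign analysis forcing the negative root of $A^2=\sigma_{\texttt{s}}^2+(\upsilon_0 x_\ast/2L)^2$ in case (i), the identity $A=-\sigma_{\texttt{s}}\cos(x_\ast)$ explaining the signed square root in \eqref{eq:1D_eigen3}, and the self-adjointness argument giving $\tilde\varphi(r,\upsilon)=\varphi(r,-\upsilon)$ --- checks out.
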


Before turning to the proof of this proposition, there are a number of remarks we should make. 

\smallskip

First, note that the condition $\theta=1$ can be written ${\upsilon_0}/{\sigma_{\mathtt{s}}}=2L$, thus the different forms of the lead eigenfunction correspond intuitively as to whether the average distance that a  particle travels between scatterings is greater than, or less than, the width of the strip. 
Second, the symmetric form of $\varphi$ above can be readily seen from the invariance of the system under the reflection $(r, v)\mapsto (-r, -v)$. 
%
Third, as seen in the previous discussion, the eigenfunctions $\varphi$ and $\tilde\varphi$ are unique up to a multiplicative constant. Here, we fix this constant by requiring $\varphi(0, \upsilon_0)=\tilde\varphi(0, -\upsilon_0)=1$. 

\subsection{Numerical experiments with the one dimensional slab reactor}
For this setting,  we can easily develop numerical solutions for the NBP, NRW and $h$-NRW, albeit that we must decide which $h$ function to work with in advance. As $V$ consists of just two elements, the setting of Urts functions is easy to work with.

\smallskip

For convenience, assume that $ \upsilon_0/2L\sigma_{\texttt s}<1$, then we will work with an Urts function of the form 
\begin{equation}\label{ex:h}
h_{1}(r, \upsilon_0) = \left(r+L+\frac{\upsilon_0}{\sigma_{\texttt s}}\right)\wedge (L-r)\text{ and }h_{1}(r, -\upsilon_0) =  (r+L)\wedge\left(L-r+\frac{\upsilon_0}{\sigma_{\texttt s}}\right),
\end{equation}
for $-L<r<L$. 
In that case, from \eqref{betadef}, we have that $\beta(r,\upsilon) = \sigma_{\texttt{f}}$. Note, moreover, that, for example, when  $(r,\upsilon) = (L,\upsilon_0)$,
\[
\bL h_{1}(L,\upsilon_0) = (\bA - \beta)h_{1}(L,\upsilon_0)=  -\upsilon_0+ \sigma_{\texttt{s}}\left({h_{1}}(L, -\upsilon_0)-h_{1}(L,\upsilon_0)\right) =0.
\]
Hence, we can easily verify that 
\[
\sup_{r\in D, \upsilon\in\{-\upsilon_0,\upsilon_0\}}\frac{\bL h_{1}(r,\upsilon)}{h_{1}(r,\upsilon)}<\infty.
\] 
We will also consider the choices of the Urt functions proposed similar to Examples \ref{Urtsexample1}. 
\begin{equation}\label{ex:h'}
h_{2}(r, \upsilon_0) =  L-r \quad\text{and}\quad h_{3}(r, \upsilon_{0})=\left(r+L+\frac{\upsilon_0}{\sigma_{\texttt s}}\right) (L-r),
\end{equation}
and $h_{i}(r, -\upsilon_{0})=h_{i}(-r, \upsilon_{0})$, for $i=2, 3$.

\bigskip

{\bf NBP estimators. } 
Recall from Theorem \ref{cor:var} that the branching estimator $\Psi^{\texttt{br}}_k[g](t, r, \upsilon)$ has different behaviours for supercritical, critical and subcritical systems. While estimations improve as time grows in a supercritical situation, this is not the case for subcritical systems (Fig.~\ref{fig:sub-br}) since most genealogies have become extinct at large times. Moreover, simulations (Fig.~\ref{fig:simu-br}) suggest that in the supercritical case for fixed $k$ the convergence rate is indeed  $1/t$. 

\begin{figure}[htp]
  \includegraphics[width=\textwidth]{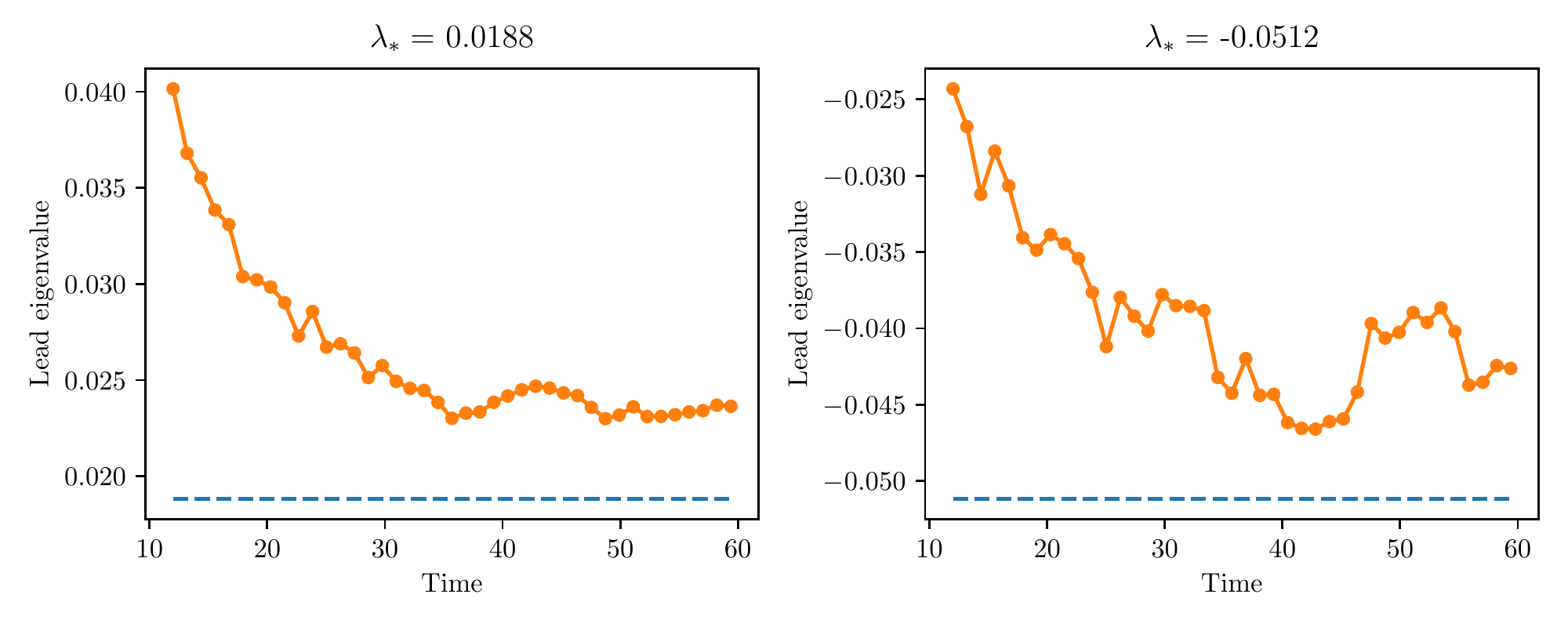}
  \caption{\label{fig:sub-br} {\bf Supercritical vs.~subcritical. } Estimates of $\lambda_*$ from the branching estimator in a supercritical case (left) and a subcritical one (right). }
\end{figure}

\begin{figure}[htp]
  \includegraphics[width=\textwidth]{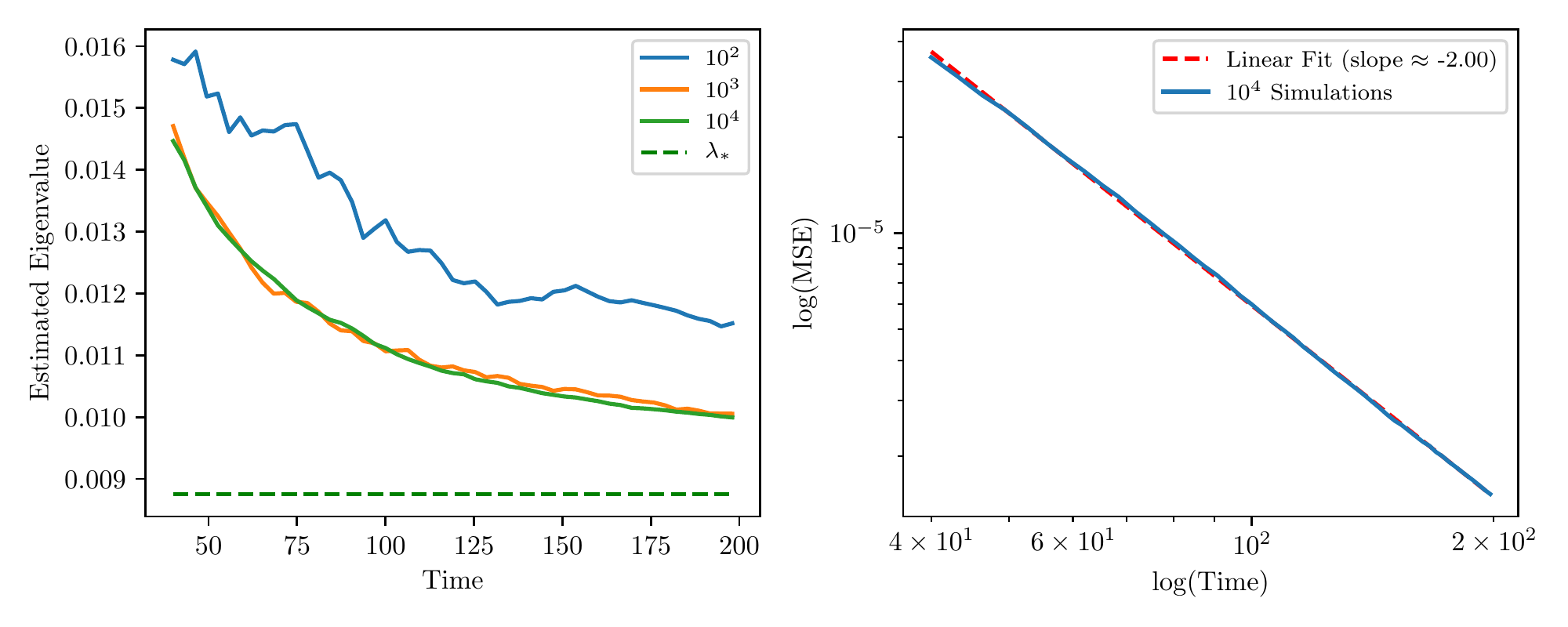}
\caption{\label{fig:simu-br} {\bf Convergence rate of branching estimators. } (Left) Estimates of $\lambda_*$ from the branching estimator $\frac1t\Psi^{\texttt{\emph{br}}}_k[g](t, r, \upsilon)$in a supercritical situtation; the dashed line depicts the value of $\lambda_{\ast}$ and different lines correspond to experiments with different values of $k$. (Right) The function $t\mapsto (\frac1t\Psi^{\texttt{\emph{br}}}_k[g](t, r, \upsilon)-\lambda_{\ast})^{2}$ in log-log scale; the slope is approximately $-2.00$. }
\end{figure}

Next, we illustrate how our estimates of the simulation costs can be applied in this setting with two natural choices of $(f, g)$: one being $f=\mathbf 0$ and $g=\mathbf 1$, which, as discussed previously, estimates the memory cost as the function $C[\mathbf 0, \mathbf 1](t)$ counts precisely the number of particles that have appeared before time $t$, the other chosen to be $f=\mathbf 1$ and $g=\mathbf 0$ for an estimate of the CPU-time. 
Recall that in the supercritical case Lemma \ref{thm: cost} predicts an exponential growth at rate $\lambda_{\ast}$. 
Note also that in the current setting, the various constants appearing in Lemma \ref{thm: cost} can be easily computed from the solutions given in Proposition \ref{intervalprop}. See Fig.~\ref{fig:sub-cost} and \ref{fig:cost-br} for the numerical results.

\begin{figure}[htp]
  \includegraphics[width=\textwidth]{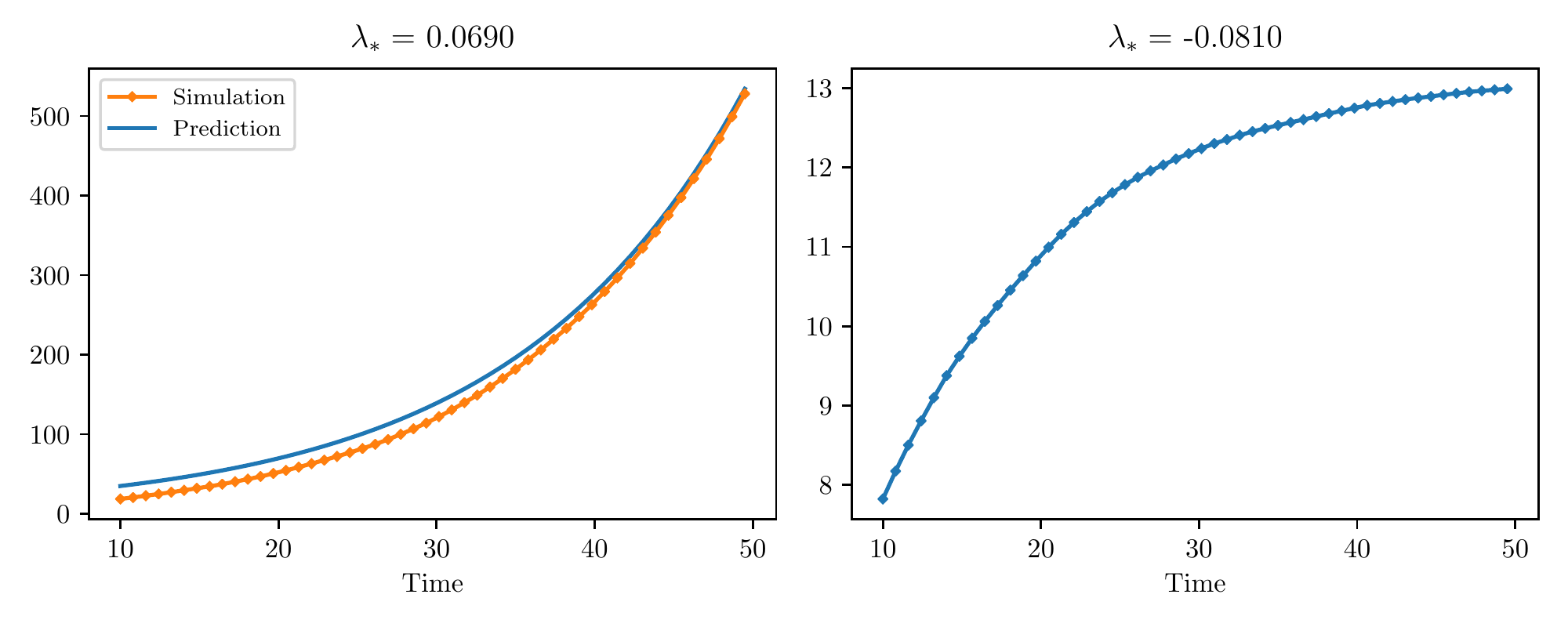}
  \caption{\label{fig:sub-cost} {\bf Supercritical vs.~subcritical. } Mean memory costs in a supercritical case (left) and a subcritical case (right) for the NBP. In the supercritical case, this is compared to the prediction derived from \eqref{eq: cost_asy_sur}.}
\end{figure}

\begin{figure}[htp]
  \includegraphics[width=\textwidth]{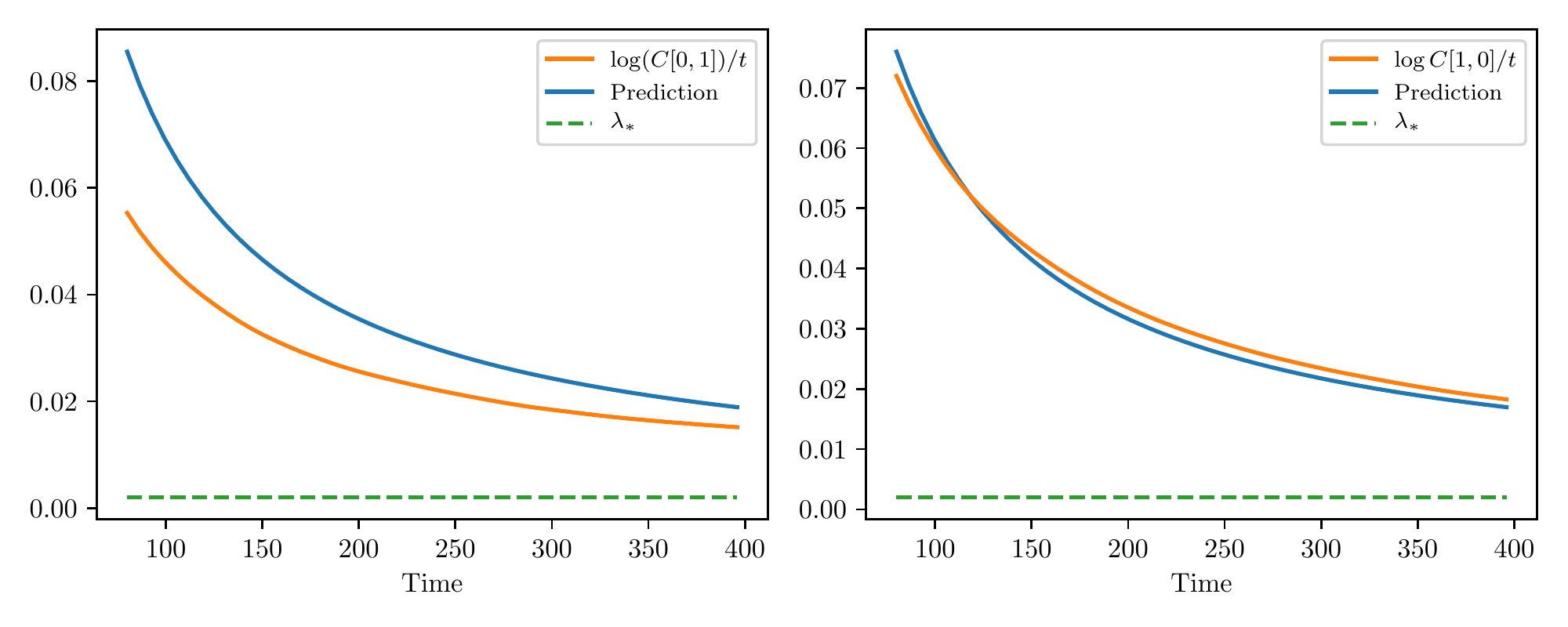}
  \caption{\label{fig:cost-br} (Left) The orange curve is the plot of $t\mapsto \frac1t\log C[\mathbf 0, \mathbf 1](t)$ averaged over $500$ simulations of the NBP. The blue line depicts the predicted growth rate given in the RHS of \eqref{eq: cost_asy_sur} and the green line indicates the value of $\lambda_{\ast}$. (Right) The orange curve is the plot of $t\mapsto \frac1t\log C[\mathbf 1, \mathbf 0](t)$ averaged over $500$ simulations of the NBP. The blue line depicts the predicted growth rate given in the RHS of \eqref{eq: cost_asy_sur} and the green line indicates the value of $\lambda_{\ast}$. }
\end{figure}

\medskip {\bf NRW and $h$-NRW estimators. } Since the particles are typically killed on the boundary after some period of time, the basic NRW estimator $\Psi^{\texttt{rw}}_{k}[g](t, r, \upsilon)$ performs worse than the branching estimator (Fig.~\ref{fig:NRW}).
\smallskip

The $h$-NRW estimators depends on the choice of $h$. We see this by considering three cases based on \eqref{ex:h} and \eqref{ex:h'}.  See Fig.~\ref{fig:1d-hRW} for a plot of the $h$'s and a comparison of the estimators they yield.  The pictures there suggest that both $h_{1}$-NRW and $h_{3}$-NRW estimators outperformed the NBP estimator in this example. Note also, based on the discussion in the previous section, that the `poor' choice of $h$, \emph{viz.} $h_2$, systematically underestimates $\lambda_*$.

\begin{figure}
  \includegraphics[width=\textwidth]{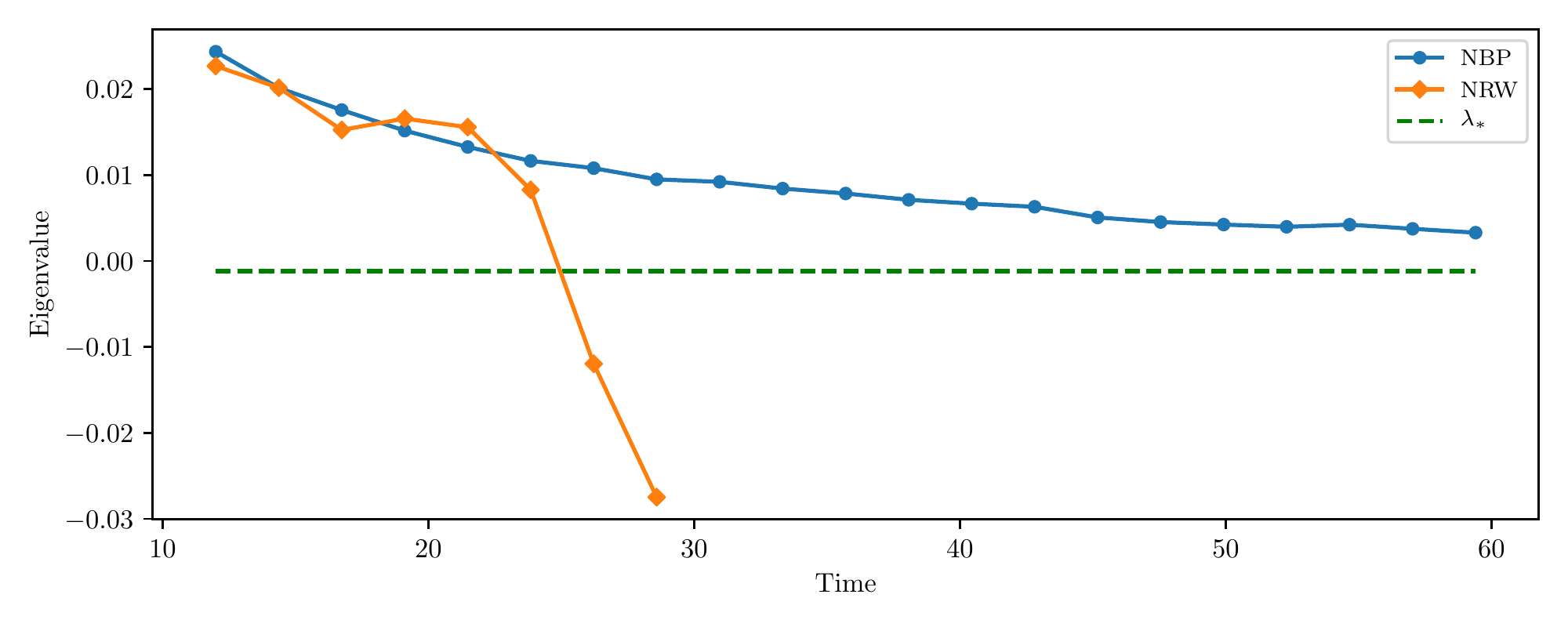}
\caption{\label{fig:NRW} {\bf NBP vs.~NRW. } Estimates of $\lambda_*$ for the NBP and NRW estimators in a supercritical case. In the NRW case, the plot stops after all particles are killed. The instability is due to the very small numbers of surviving particles at large times. The true eigenvalue here is $\lambda_\ast \approx -0.00118$.}
\end{figure}

\begin{figure}
  \includegraphics[width=\textwidth]{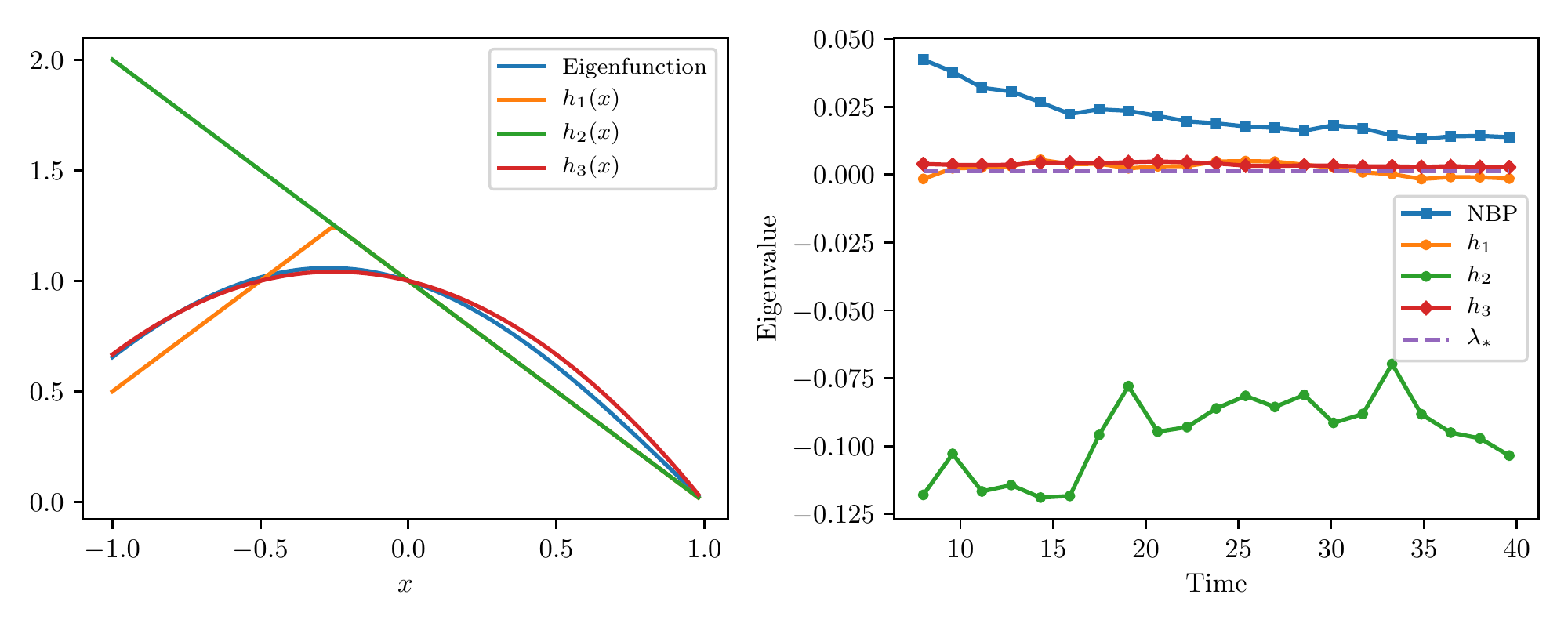}
  \caption{\label{fig:1d-hRW} {\bf  $h$-NRW estimators. } (Left) Plots of three different choices of $h$ compared to the true eigenfunction (in blue). (Right) Comparison of these three $h$-NRW eigenvalue estimators, together with the NBP estimator ($k=1000$ simulations). }
\end{figure}

\subsection{A two dimensional reactor with four rods} 
We compare the results above with a 2D particle simulation. In this case, we model our environment as a square reactor with 4 rods, where most branching happens, and with particles killed on exit from the square. We assume all scattering events are uniform, and the particles have constant speed.

\smallskip
Numerically, the picture is similar to the 1D case. The basic NRW estimator suffers from the rapid loss of the particles (Fig.~\ref{fig:NRW}). For the $h$-NRW, we use the choice of  $h$ given in \eqref{2consts}.
Estimates over time for the eigenvalue by the branching estimator and the $h$-NRW estimator can be found in Fig.~\ref{fig:2d-hRW}.

\begin{figure}[htp]
  \includegraphics[width=0.45\textwidth]{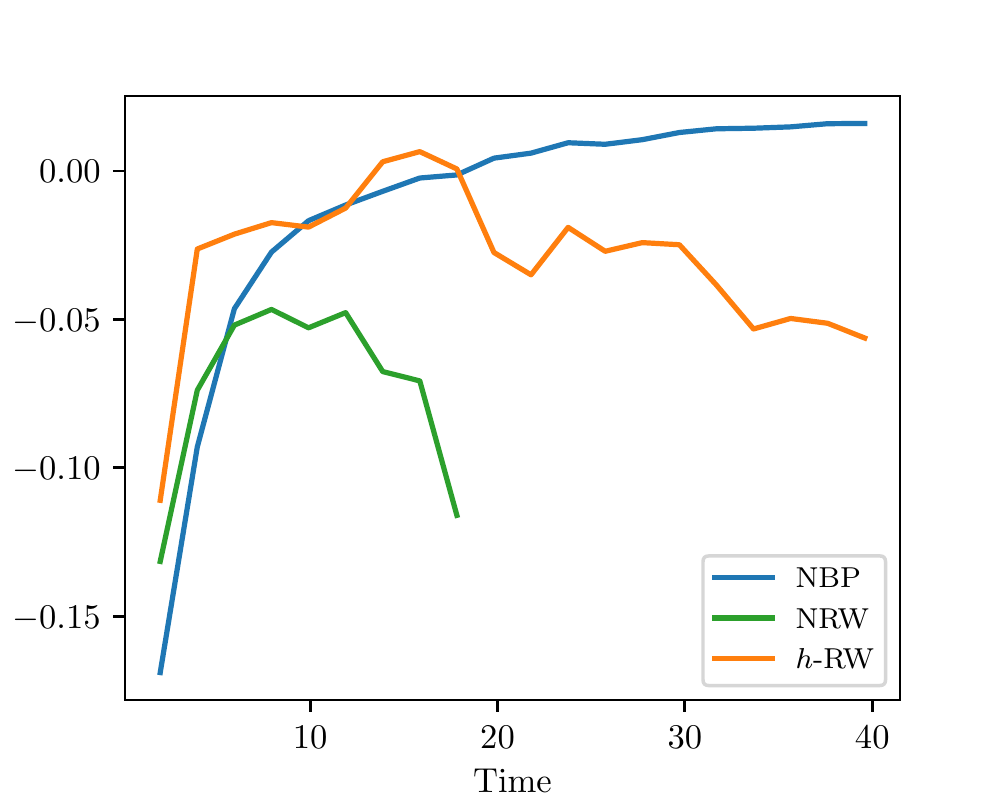} 
  \includegraphics[width=0.45\textwidth]{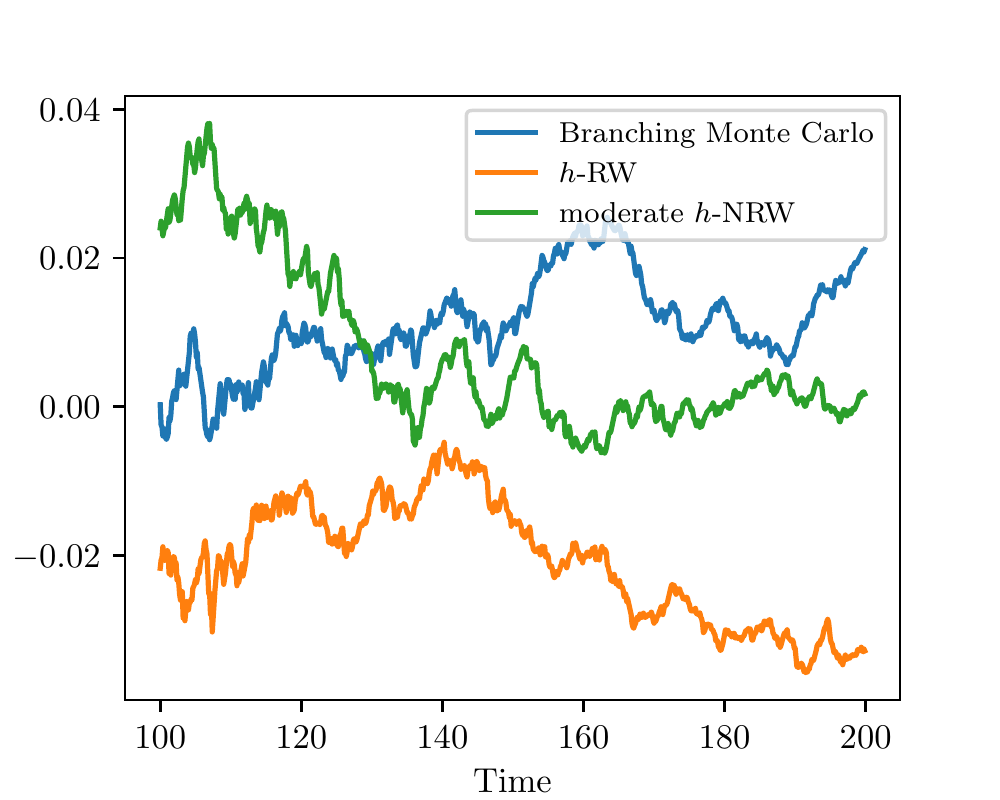} 
\caption{\label{fig:2d-hRW} {\bf $h$-NRW estimators for 2D reactor. } With the choice of $h$ given in \eqref{2consts}. Here we choose $\eta = 0.02, C = 0.1$. (Left) The estimates for the NBP, NRW and $h$-NRW cases. The particles in the NRW (started with 50,000 particles) all die out by time 20. The $h$-transform estimate is based on 200 particles, the NRW estimate is for 400 particles. The drop-off in the estimates of the $h$-transform method appear to be due to weight-degeneracy of the particles. (Right) The eigenvalue estimated using a particle filter approach. Particles are resampled based on the weight of current particles to minimise the effects of weight degeneracy. The `moderate h-NRW' uses a mild version of the $h$-transform which is closer to the NRW.}
\end{figure}
\smallskip
In Figure~\ref{fig:2d-hRW} we see the advantages and disadvantages of the $h$-transform method. In the simplest case (left), where particles are simulated until death, the NRW suffers from extinction of its particles, even though a large number of particles (50,000) are initially simulated. As a result, the estimate of $\lambda_*$ is very unreliable, and there is little hope to get the particles close to equilibrium before most particles die out. The $h$-NRW case does much better on this count, with particles surviving until large times, however the particles in this case suffer from weight degeneracy, where a few particles have large weight (the exponential, product and indicator terms in \eqref{3rd}), and many particles have comparatively small weight. As a result, the final computations of the average will be dominated by the small number of particles with large weight, leading to wasted computational effort.

\smallskip
To mitigate the weight degeneracy of the particles, we implement a particle filter algorithm. Roughly speaking, this means that at fixed times, we look at our collection of particles, and randomly choose to discard/keep particles based on the weight of individual particles, for example, by choosing particles in the next generation by independently selecting particles from the current population proportional to their current weights. In our implementation, we use a simple multinomial resampling scheme. It is known that more sophisticated resampling schemes can improve the performance of the filter \cite{douc_comparison_2005}, although we do not expect substantial benefits in this setting. We also resample our particles at fixed time-steps. More sophisticated algorithms might choose only to resample when a simple measure of the weight-diversity, such as the \emph{effective sample size} drops below a chosen threshold. Since we are not trying to measure directly the algorithmic performance in this paper, we have not tried to complicate our implementation of the particle filter in this paper.

\smallskip
Particle filtering  algorithms are well studied, see for example \cite{del_moral_feynman-kac_2004}, or \cite{angeli_limit_2019}. In this case, the $h$-transform methods appear to be slightly more stable than the branching process approximation, although it is hard to judge the accuracy here due to the lack of an analytical solution. In addition, in our implementation of this method, the $h$-transform approach is substantially slower, due to the complexity of simulating paths from the conditioned process, compared to the NBP. However in industrial implementations, the complexity of the NBP will increase as the environment gets more sophisticated, and so conditioning may become more competitive with the NBP approach, particularly when combined with smarter ways to calibrate the $h$ function. We leave details of these improvements as the subject of future work.

\appendix

\section{Monte Carlo algorithms}

In this appendix, we highlight briefly the approach to simulating a NRW,  a NBP and a NRW with importance sampling (respectively), which themselves feed into the basic Monte Carlo algorithms in Section \ref{MCsect} and Section \ref{importance}. 
We assume that the basic data of $D, V$ and $\sigma_{\texttt s}, \pi_{\texttt s}, \sigma_{\texttt f}, \pi_{\texttt f}$ are all given.  

\subsection{Simulating $\alpha\pi$-NRW}
We start by defining the distributions, 
\[
\mu^{\texttt{s}}_{r,\upsilon}(u,\infty):  = {\rm e}^{-\int_0^u\alpha(r+\upsilon s, \upsilon)\d s} \qquad u\geq 0, r\in D, \upsilon\in V
\]
and
\[
\eta^{\texttt{s}}_{r,\upsilon}(\d \upsilon')   = \pi(r,\upsilon, \upsilon')\d \upsilon', \qquad r\in D, \upsilon, \upsilon'\in V.
\]

The algorithm below for generating a NRW will take as input the starting position and velocity, $(r,v)$, and the terminal time $t$. For later use, we will also allow the particle to start at a positive time $\texttt{t}_0 \in [0,t)$.

\smallskip
\hrule
\begin{alg}[Simulation of an $\alpha\pi$-NRW] \label{A1}
Given an initial configuration $(r, \upsilon)$, an initial time $\texttt{\emph t}_0$ and a time horizon $t$: 

\smallskip

{\bf Step 1:} Set $n = 0$, $(\texttt{\emph r}_0,\texttt{\emph v}_0) = (r,\upsilon)\in D\times V$. 

\smallskip

{\bf Step 2:} Given the value of $\texttt{\emph t}_n,\texttt{\emph r}_{n}, \texttt{\emph v}_{n}$, sample $\delta_{n+1}$ with law $\mu^{\texttt{\emph s}}_{\texttt{\emph r}_{n},\texttt{\emph v}_{n}}$ and set  
\[
\texttt{\emph t}_{n+1} = \texttt{\emph t}_n+\delta_{n+1}, \quad \texttt{\emph r}_{{n+1}} = \texttt{\emph r}_{n} + \texttt{\emph v}_{n}\delta_{n+1}.
\]

{\bf Step 3:} Check if $\texttt{\emph r}_{{n+1}}\in D$ and $\texttt{\emph t}_{n+1} < t$. If true, sample $\texttt{\emph v}_{{n+1}}$  with law $\eta^{\texttt{s}}_{\texttt{\emph r}_{n+1}, \texttt{\emph v}_{n}}$, increase $n$ by one and go to Step 2. If false, set 
\[
\texttt{\emph t}_{\rm{end}} = \min\{\texttt{\emph t}_{n}+\inf\{s>0: \texttt{\emph r}_{{n}}+\texttt{\emph v}_{{n}}s\notin D\},t\}.
\]

\smallskip

{\bf Step 4:} Return $((\texttt{\emph t}_{k}, \texttt{\emph r}_{k}, \texttt{\emph v}_{k}), 0 \le k \le n)$ and $\texttt{\emph t}_{\rm{end}}$.

\smallskip

\end{alg}
\hrule
\smallskip

We note that there are further algorithmic simplifications within the above description. For example, sampling from the distribution $\mu^{\texttt{s}}_{r,\upsilon}$ can be done as follows.  Suppose that $\mathbf{e}$ is a unit mean exponential random variable. For each $r\in D$ and $\upsilon \in V$, the law $\mu_{r,\upsilon}(\cdot)$ agrees with that of $ \inf\{t>0: \textstyle{\int_0^t}\beta(r+\upsilon s, \upsilon)\d s>{\mathbf{e}}\}$. In the case that $\beta$ is bounded above by a constant, this is further simplified through the use of a simple rejection sampling algorithm.

\smallskip

The above algorithm returns a sequence of variables $\texttt{z} := \{((\texttt{t}_k, \texttt{r}_k, \texttt{v}_k), {0\le k \le n}), \texttt{t}_{\rm{end}} \}$, for some $n$, and a final time $\texttt{t}_{\rm{end}} \le t$. Together these fully determine the trajectory, say $((\texttt{r}_s^{\texttt{z}}, \texttt{v}_s^{\texttt{z}}), s < \texttt{t}_{\rm{end}})$, of a neutron random walk, in the following way:
\begin{equation} \label{eq:trajdefn}
  \texttt{r}_s^{\texttt{z}} = \texttt{r}_k + \texttt{v}_k(s-\texttt{t}_k) \quad\text{and}\quad \texttt{v}_s^{\texttt{z}}=\texttt{v}_k, \quad \text{if}\quad \texttt{t}_k\le s< \texttt{t}_{k+1}\wedge \texttt{t}_{\rm{end}},
\end{equation}
where we take $\texttt{t}_{n+1} = \infty$.  By simulating many particles in this manner, the value of the semigroup may be estimated via \eqref{Psirw}.

\subsection{Simulating $(\sigma_{\texttt {s}}, \pi_{\texttt{s}}, \sigma_{\texttt{f}}, \mathcal{P})$-NBP when only $\sigma_{\texttt {s}}, \pi_{\texttt{s}}, \sigma_{\texttt{f}}, \pi_{\texttt{f}}$ is given} Building on Section \ref{A1}, we have the following algorithm which generates the branching particle system. We need the following notation. Let $\texttt{z}$ be a trajectory as defined above. Then we define a measure on $[\texttt{t}_{0}^\texttt{z},\infty) \cup \{\infty\}$ by:
\[
  \mu^{\texttt{f}}_{\texttt{z}}((u,\infty) \cup \{\infty\}):  = {\rm e}^{-\int_{\texttt{t}_{0}^\texttt{z}}^{u\wedge \texttt{t}_{\rm{end}}^\texttt{z}}\sigma_{\texttt{f}}(\texttt{r}_s^{\texttt{z}} , \texttt{v}_s^{\texttt{z}} )\d s},  \qquad u\geq \texttt{t}_{0}^\texttt{z}
\]
and a measure on $V$ by
\[
\eta^{\texttt{f}}_{\texttt{z},t}(\d \upsilon')   = \frac{\pi_{\texttt{f}}(\texttt{r}_t^{\texttt{z}} , \texttt{v}_t^{\texttt{z}},\upsilon')\d \upsilon'}{\pi_{\texttt{f}}(\texttt{r}_t^{\texttt{z}} , \texttt{v}_t^{\texttt{z}}, V)}, \qquad
 \upsilon'\in V.
\]

\smallskip

In order to simulate a NBP with given data $\sigma_{\texttt s}, \pi_{\texttt s}, \sigma_{\texttt f}, \pi_{\texttt f}$, we need to supply additional information, as the family of densities $\pi_{\texttt f}(r,\upsilon,\cdot)$, $r\in D, \upsilon\in V$ only tells us the mean behaviour of the point process of outgoing fission velocities, whose probabilities were denoted by $\mathcal{P}_{(r,\upsilon)}$, $r\in D, \upsilon\in V$.
Moreover, there is no unique way to choose $\mathcal{P}_{(r,\upsilon)}$, $r\in D, \upsilon\in V$ given $\pi_{\texttt f}(r,\upsilon,\cdot)$, $r\in D, \upsilon\in V$.

\smallskip
 
This does not present a problem however, to the contrary it presents an opportunity. As all of our estimators that are based on the NBP are built around the notion of mean growth, we are at our liberty to choose a convenient $(\mathcal{P}_{(r,\upsilon)}, r\in D, \upsilon\in V)$, and, as we shall see below, an obvious choice is that of a Poisson random field with intensity density given by $\pi_{\texttt f}(r,\upsilon,\cdot)$, $r\in D, \upsilon\in V$. As a small remark, this choice is unrealistic as a physical model given the assumption (H2), however for the synthetic purposes of Monte Carlo simulation, it is very convenient both practically and mathematically.

\smallskip
Below, we give the algorithm to produce the paths of the NBP. Note that, unlike the NRW, the output of the NBP is a random number of trajectories that may have birth times $b^i = \texttt{t}_{0}^i$ and death times $\texttt{t}_{\rm{end}}^i$ which may be strictly postive, and strictly before the end time $t$. The object returned will therefore be a set of trajectories (via \eqref{eq:trajdefn}) of the form $\{((\texttt{t}_k^i, \texttt{r}_k^i, \texttt{v}_k^i), {0\le k \le n^i}), \texttt{t}_{\rm{end}}^i\}_{i = 1}^N$, where $N$ is the (random) total number of particles in the branching process.
\smallskip

\hrule
\begin{alg}[Simulation of a $(\sigma_{\texttt {s}}, \pi_{\texttt{\emph s}}, \sigma_{\texttt{f}}, \pi_{\texttt{f}})$-NBP]
\label{A2}
Given an initial configuration $(r_1, \upsilon_1),\cdots (r_\ell, \upsilon_ \ell)$ and a time horizon $t$:

\smallskip {\bf Step 1:} Set $\ell^0 = \ell$, $b_i = 0, i = 1,\cdots, \ell^0$, $\mathcal{X}_0=\{(r_1, \upsilon_1, b_1),\cdots, (r_\ell, \upsilon_\ell, b_{\ell} )\}$ and $n = 0$.

\smallskip
{\bf Step 2:} For each $i \in \{1, \dots, \ell^n\}$ and corresponding $(r_i,\upsilon_i, b_i)\in \mathcal{X}_n$ such that $b_i <t$,  run Algorithm \ref{A1} to produce a $\sigma_{\texttt {\emph s}}\pi_{\texttt{\emph s}}$-NRW with initial configuration $(r_i, \upsilon_i)$, birth time $b_i$ and terminal time $t$.  Denote the resulting NRW by $\texttt{\emph z}^{i,n}:= \{((\texttt{\emph r}^{i,n}_k, \texttt{\emph t}^{i,n}_k, \texttt{\emph v}^{i,n}_k), 0\leq k\le m^{i,n}), \texttt{\emph t}^{i,n}_{\rm{end}}\}$.

\smallskip {\bf Step 3:} For each $i \in \{1, \dots, \ell^n\}$ and corresponding $(r_i,\upsilon_i, b_i)\in \mathcal{X}_n$ such that $b_i <t$, sample $\gamma^{i,n}$ with law $\mu^{\texttt{\emph f}}_{\texttt{\emph z}^{i,n}}$ 
and then, if $\gamma^{i,n}\le\texttt{\emph t}_{\rm end}^{i,n}$:
\begin{enumerate}
\item Sample $\xi^{i,n}$ independent variables $u^{i,n}_1, \dots, u^{i,n}_{\xi^{i,n}}$ with probability density $\eta^{\texttt{\emph f}}_{\texttt{\emph z}^{i,n}}$, where $\xi^{i,n}$ is Poisson distributed with parameter $\pi_{\texttt{\emph f}}(\texttt{\emph r}^{i,n}_{\gamma^{i,n}}, \texttt{\emph v}^{i,n}_{\gamma^{i,n}}, V)$; and
\item Trim the path $\texttt{\emph z}^{i,n}$ at time $\gamma^{i,n}$, that is, set $\tilde{m}^{i,n}$ to be the largest integer $k \le m^{i,n}$ such that $\texttt{t}_{k}^{i,n} \le \gamma^{i,n}$, and redefine $\texttt{\emph z}^{i,n}:= \{((\texttt{\emph r}^{i,n}_k, \texttt{\emph t}^{i,n}_k, \texttt{\emph v}^{i,n}_k), 0\leq k\le \tilde{m}^{i,n}), \gamma^{i,n}\}$.
\end{enumerate}

\smallskip
{\bf Step 4:} Let $\mathcal{X}_{n+1}$ be the set: $\{(\texttt{\emph r}^{i,n}_{\gamma^{i,n}}, u^{i,n}_{j}, \gamma^{i,n}); 1 \le i \le \ell^n, 1 \le j \le \xi^{i,n}, \texttt{\emph t}^{i,n}_0 < t\}$. Set $\ell^{n+1} := |\mathcal{X}_{n+1}|$.

\smallskip

{\bf Step 5:} If $\ell^{n+1} >0$, go to Step 2. Otherwise stop and return $\mathcal{X} = \{\texttt{\emph z}^{i,n}; 1 \le i \le \ell^n, n \in \N\}$.

\end{alg}
\hrule
\smallskip

In this algorithm, we then estimate the semigroup using \eqref{def: Psi_br}, so we have (in the notation of the algorithm above):
\begin{equation*}
  \Psi_k[g](t, r, \upsilon) = \frac{1}{k} \sum_{\texttt{z} \in \mathcal{X}} g(\texttt{r}_t^{\texttt{z}}) \mathbf{1}_{(\texttt{t}_{\rm end}^{\texttt{z}} \ge t)}.
\end{equation*}

%
%
%

\newpage

\begin{wrapfigure}{r}{0.5\textwidth}
  \begin{center}
\includegraphics[height=5.5cm]{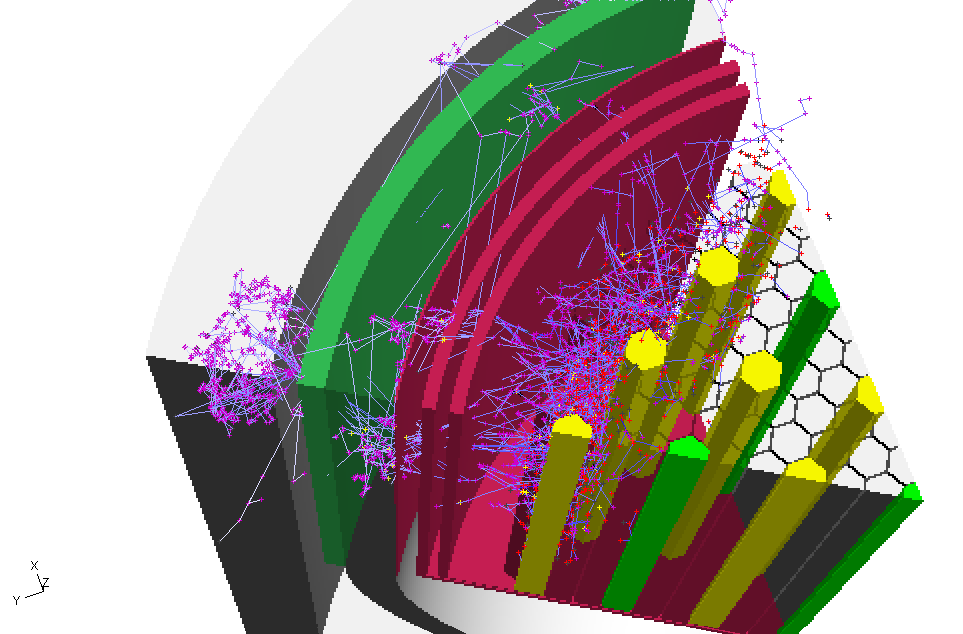}
  \end{center}
  \vspace{-10pt}
\caption{An example of a slice through a mock nuclear reactor design in which some neutron path simulations are depicted. The reactor core displays symmetry.}
\label{cake}
\end{wrapfigure}
Let us make some practical notes on the simulation of a $(\sigma_{\texttt {s}}, \pi_{\texttt{s}}, \sigma_{\texttt{f}}, \pi_{\texttt{f}})$-NBP, pertaining in particular to how such simulations are performed in industrial software. The geometric structure of a virtual reactor will be designed on a special CAD tool. After that, according to the physical properties of the materials used, the values of the four cross-sections $(\sigma_{\texttt {s}}, \pi_{\texttt{s}}, \sigma_{\texttt{f}}, \pi_{\texttt{f}})$ are mapped from data libraries into the different geometrical regions. This generates numerically the piecewise constant functions $(\sigma_{\texttt {s}}, \pi_{\texttt{s}}, \sigma_{\texttt{f}}, \pi_{\texttt{f}})$ across the entire space-velocity domain $D\times V$. The construction of this numerical representation of the cross-sections in a virtual nuclear reactor can be stored in a file whose size\footnote{This is information which has been shared with us by our industrial partner at the ANSWERS group in {\it Jacobs}.} is as little as 150MB. Thereafter, the NBP simulation calls upon the values in this file 
as it samples from e.g. $\mu^{\texttt{f}}_{r,\upsilon}$ and $\eta^{\texttt{f}}_{r,\upsilon}$.

\subsection{Simulating $\alpha\pi$-NRW Monte Carlo with importance sampling}

In conclusion, a second proposed efficiency to the Monte Carlo simulation of solutions to the NTE  is to make an educated guess at the shape of $\varphi$, the function $h$ and then to adapt Algorithm \ref{A1} as follows.
\smallskip

\hrule
\begin{alg}\label{alg2}
$\mbox{ }$
\smallskip

Run Step 1 - 4 of Algorithm \ref{A1} replacing the quantities $\alpha$, $\beta$ and $\pi$ by
\[
\alpha^h(r,\upsilon) =
\frac{\alpha(r,\upsilon)\int_Vh(r,\upsilon')\pi(r,\upsilon,\upsilon') \,\d\upsilon'}{h(r,\upsilon)}, \qquad 
\beta^h (r,\upsilon) = \dfrac{{\ebJ} h  (r,\upsilon)}{h  (r,\upsilon)} + \beta (r,\upsilon) 
\]
and
\[
\pi^h(r,\upsilon,\upsilon') =
\frac{h(r,\upsilon')}{\int_Vh(r,\upsilon')\pi(r,\upsilon,\upsilon')\, \d\upsilon'}\pi(r,\upsilon,\upsilon'),
\]
noting that $\texttt{\emph t}^{\rm{end}}= t$ always.
\end{alg}
\hrule
\smallskip

In this case, the Monte Carlo estimate of the operator $\Psi_k[g]$ is then given by \eqref{Psihrw}.

\section{A many-to-two lemma for NBP}

Recall the notation of \eqref{PP}, \eqref{Erv} and define, for $r\in D$, $\upsilon\in V$, $f, g\in L^+_{\infty}(D\times V)$,
\begin{equation}
\eta_{\texttt{f}}[f, g](r, \upsilon)= \sigma_{\texttt f}(r,\upsilon)\mathcal{E}_{(r,\upsilon)}\left[
\sum_{{i,j = 1},{i\neq j}}^N   f(\upsilon_i)g(\upsilon_j)
\right].
\label{etadef}
\end{equation}
As an abuse of notation, we will also write, for $r\in D$, $\upsilon\in V$, $f\in L_{\infty}(D\times V)$, 
\[
\eta_{\texttt{f}}[f](r, \upsilon):=\eta_{\texttt{f}}[f, f](r, \upsilon)=\sigma_{\texttt f}(r,\upsilon)\mathcal{E}_{(r,\upsilon)}\left[
\sum_{{i,j = 1},{i\neq j}}^N   f(\upsilon_i)f(\upsilon_j)
\right]. 
\]
We have the following two point correlation formula (also called the many-to-two formula; cf \cite{HR}).
\begin{lem}[Many-to-two]\label{M2F}
Suppose that $f,g\in L^{+}_{\infty}(D\times V)$. Then 
\begin{align}
\label{M22}
\mathbb E_{\delta(r, \upsilon)}\Big[\langle f, X_{t}\rangle \langle g, X_t\rangle \Big]&=
\psi_{t}[fg](r, \upsilon)+\int_{0}^{t} \psi_{s}\Big[ \eta_{\texttt{\emph f}}[\psi_{t-s}[f], \psi_{t-s}[g]]\Big](r, \upsilon)\d s. 
\end{align}
\end{lem}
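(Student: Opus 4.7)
The plan is to prove \eqref{M22} by conditioning on the first fission event of the founding neutron and identifying both sides of the identity as solutions of a common linear integral equation.

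Set $U_t(r,\upsilon) := \mathbb{E}_{\delta_{(r,\upsilon)}}[\langle f, X_t\rangle \langle g, X_t\rangle]$, and let $T$ be the first fission time of the initial particle. Since scattering does not change the particle count, up to time $T$ the NBP consists of a single neutron moving as a $(\sigma_{\texttt{s}},\pi_{\texttt{s}})$-NRW which I denote $(R^{\texttt{s}}, \Upsilon^{\texttt{s}})$; conditional on this trajectory, $T$ is the first atom of an inhomogeneous Poisson process of rate $\sigma_{\texttt{f}}(R^{\texttt{s}}_u, \Upsilon^{\texttt{s}}_u)$. On the event $\{T > t\wedge\tau^{D}\}$, only the founding particle is alive (or already killed), so $\langle f, X_t\rangle \langle g, X_t\rangle = (fg)(R^{\texttt{s}}_t, \Upsilon^{\texttt{s}}_t)\mathbf{1}_{(t < \tau^{D})}$. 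On $\{T \leq t\}$, the Markov branching property expands $\langle f, X_t\rangle\langle g, X_t\rangle$ as a double sum over the independent sub-NBPs initiated by the $N$ fission offspring, splitting into a `diagonal' piece $\sum_i \langle f, X^{(i)}_{t-T}\rangle\langle g, X^{(i)}_{t-T}\rangle$ and an `off-diagonal' cross piece $\sum_{i\ne j} \langle f, X^{(i)}_{t-T}\rangle\langle g, X^{(j)}_{t-T}\rangle$. Integrating out the fission offspring via \eqref{Erv} and \eqref{etadef}, then integrating over the law of $T$, yields a Feynman--Kac-type integral equation for $U_t$ in terms of $U$, $\psi_\cdot[f]$, $\psi_\cdot[g]$, and a source term of the form $s\mapsto \eta_{\texttt{f}}[\psi_{t-s}[f], \psi_{t-s}[g]]$ --- here the factor $\sigma_{\texttt{f}}$ produced by the Poissonian time-integration cancels exactly against the $\sigma_{\texttt{f}}^{-1}$ that appears when one unfolds \eqref{etadef}.

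Applying the identical conditioning argument to $\langle fg, X_t\rangle$ shows that $\psi_t[fg]$ satisfies exactly the same integral equation but with the source term deleted. Thus $V_t := U_t - \psi_t[fg]$ solves a linear integral equation with source $s\mapsto \eta_{\texttt{f}}[\psi_{t-s}[f], \psi_{t-s}[g]]$, and whose underlying semigroup may be identified with the NBP semigroup $\psi$ itself (by converting exponential killing at rate $\sigma_{\texttt{f}}$ into an additive compensator term; \emph{cf.} the manipulation in the proof of Theorem \ref{doobhthrm} and Lemma 1.2, Chapter 4 of \cite{Dynkin2}). Duhamel's principle (variation of parameters) then gives
\begin{equation*}
V_t(r,\upsilon) = \int_0^t \psi_s\bigl[\eta_{\texttt{f}}[\psi_{t-s}[f], \psi_{t-s}[g]]\bigr](r,\upsilon)\, \d s,
\end{equation*}
which is exactly \eqref{M22}. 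Uniqueness of the solution to the integral equation, which is needed to close the argument, follows from a standard Gr\"onwall estimate using the uniform-in-time bounds supplied by Lemma \ref{NBPrep}.

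The main technical hurdle is the bracketed identification step, namely verifying that the resolvent of the killed-semigroup integral equation coincides with $\psi$. This is achieved via the standard device of converting exponential (killing) potentials into additive compensators along the NRW path and appealing to uniqueness of the mild equation from Lemma \ref{NBPrep}. Finiteness of all integrals is guaranteed by Assumptions (H1) and (H4): the former yields uniform control of $\psi_t$ on bounded time intervals through Lemma \ref{NBPrep}, and the latter ensures that $\mathcal{Z}$ has uniformly bounded second factorial moment, so $\sup_{r,\upsilon}\eta_{\texttt{f}}[1,1](r,\upsilon)<\infty$ and hence the source $\eta_{\texttt{f}}[\psi_{t-s}[f], \psi_{t-s}[g]]$ is locally bounded in time.
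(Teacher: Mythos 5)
Your proof is correct and follows essentially the same strategy as the paper's: decompose by the first branching-relevant event of the root particle, derive a mild integral equation for the second-moment functional, and then identify the solution with the Duhamel expression using the Dynkin Lemma 1.2 manipulation (converting exponential killing potentials to additive compensators) together with a Gr\"onwall uniqueness argument. Two modest differences in execution. First, the paper conditions on the root's first scattering \emph{or} fission event, which gives a renewal equation with pure advection between events; you condition on the first fission alone and fold all intervening scatters into a scattering-only NRW path. Both are valid; the paper's choice makes the Dynkin step marginally cleaner because the base ``semigroup'' between events is just the deterministic advection $\texttt{U}_t$, whereas yours involves the killed scattering NRW. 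Second, the paper reduces to $f=g$ by polarisation before computing the second moment; you carry $f$ and $g$ throughout and read off the mixed cross term $\sum_{i\ne j}\langle f, X^{(i)}\rangle\langle g, X^{(j)}\rangle$ directly, which avoids the polarisation bookkeeping at the cost of a bilinear (rather than quadratic) computation. One small nitpick on wording: you say the factor $\sigma_{\texttt{f}}$ from the Poissonian fission-time density ``cancels'' against a $\sigma_{\texttt{f}}^{-1}$ hidden in \eqref{etadef}. What actually happens is that the fission-time density factor $\sigma_{\texttt{f}}$ and the conditional offspring expectation $\mathcal{E}_{(r,\upsilon)}\big[\sum_{i\ne j}\cdots\big]$ multiply to reassemble $\eta_{\texttt{f}}$ exactly as defined in \eqref{etadef}; nothing cancels. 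The mathematics is right, but that sentence as written would confuse a reader.
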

\begin{proof}
The proof follow the  arguments in Harris \& Roberts \cite{HR}, which  can be easily adapted to the current context. 

\smallskip

As a first step, we note that, for  $h\in L^{+}_{\infty}(D\times V)$ and $H: \mathbb R_{+}\times D\times V\to \mathbb R_{+}$ continuous and bounded, the function 
\begin{align}
u_{t}(r, \upsilon) &:=  \mathbf{E}_{(r,\upsilon)}\left[{\rm e}^{\int_0^t\beta(R_s, \Upsilon_s)\d s}h(R_t, \Upsilon_t) \mathbf{1}_{(t < \tau^D)}\right]+ \mathbf{E}_{(r,\upsilon)}\left[\int_{0}^{t\wedge \tau^{D}} \!\!\!\!\!\!\!\! H(s, R_{s}, \Upsilon_{s}){\rm e}^{\int_0^s\beta(R_u, \Upsilon_u)\d u} \d s\right]\notag\\
&=\psi_t[h](r,\upsilon) + \mathbf{E}_{(r,\upsilon)}\left[\int_{0}^{t\wedge \tau^{D}} \!\!\!\!\!\!\!\! H(s, R_{s}, \Upsilon_{s}){\rm e}^{\int_0^s\beta(R_u, \Upsilon_u)\d u} \d s\right], 
\label{ut}
\end{align}
for $t\geq 0$, $r\in D$ and  $\upsilon \in V$,
uniquely solves the integral equation
\begin{equation}
u_{t}(r, \upsilon)  = \bU_{t}[h](r, \upsilon)+\int_{0}^{t} \bU_{s}\Big[H(s, \cdot) +(\bS+\bF)u_{t-s}\Big](r, \upsilon) \d s.
\label{hasunique}
\end{equation}
We only sketch the proof of this fact for the sake of brevity.  To see why this is the unique solution, it suffices to return to the proof of e.g.  Lemma 6.1 in \cite{MultiNTE} and note that the approach given there works equally well here. That is to say, we first condition the right-hand side of \eqref{ut} on the first fission or scattering event, whichever comes first, to generate a recursion in $u$. This can then be manipulated further with the help of Lemma 1.2, Chapter 4 in \cite{Dynkin2} to obtain \eqref{hasunique}. Finally uniqueness of the latter follows by a standard argument appealing to Gr\"onwall's Lemma.

\smallskip

To complete the proof of \eqref{M22},  it suffices to consider the case $f=g$, as the general form will follow from the polarisation
\[
\mathbb E_{\delta(r, \upsilon)}\Big[2\langle f, X_{t}\rangle \langle g, X_t\rangle \Big]=\mathbb E_{\delta(r, \upsilon)}\Big[\langle f+g, X_{t}\rangle ^{2} \Big]-\mathbb E_{\delta(r, \upsilon)}\Big[\langle f, X_{t}\rangle ^{2} \Big]-\mathbb E_{\delta(r, \upsilon)}\Big[\langle g, X_{t}\rangle ^{2} \Big]. 
\]
To this end, denote by $w_{t}(r, \upsilon)=\mathbb E_{\delta(r, \upsilon)}[\langle g, X_{t}\rangle ^{2}]$, for $t\geq 0$, $r\in D$, $\upsilon\in V$. For convenience, write $\Pi_t(r,\upsilon) = \exp(-\int_0^t \sigma(r+\ell \upsilon, \upsilon)\d \ell)$, for $r\in D$, $\upsilon\in V$.
Once again, by conditioning on the first scattering or fission event and then applying  Lemma 1.2, Chapter 4 in \cite{Dynkin2}, we get 
\begin{align*}
w_{t}(r, \upsilon) &=\Pi_t(r,\upsilon) 
\bU_{t}[g^{2}](r, \upsilon)+\int_{0}^{t}\bU_{s}\Bigg[\sigma_{\texttt{s}}\Pi_s \int_V w_{t-s}(r,\upsilon')\pi_{\texttt{s}}(\cdot,\cdot, \upsilon')\d\upsilon'\Bigg](r, \upsilon)\d s
  \\
&+\int_{0}^{t}\bU_{s}\Bigg[\sigma_{\texttt{f}} \Pi_s \mathcal{E}_{(r,\upsilon)}\left[
\sum_{i,j = 1, i \neq j}^N  \psi_{t-s}[g](\cdot, \upsilon_i)\psi_{t-s}[g](\cdot, \upsilon_j) +\sum_{i = 1}^N  w_{t-s}[g](\cdot, \upsilon_i)
\right]\Bigg](r, \upsilon)\d s \\
& = \bU_{t}[g^{2}](r, \upsilon)+ \int_{0}^{t}\bU_{s}[(\bS+\bF)w_{t-s}](r, \upsilon)\d s+\int_{0}^{t}\bU_{s}\Big[\eta_{\texttt{f}}[\psi_{t-s}[g]] \Big](r, \upsilon)\d s.
\end{align*}
Using the representation of the solution to \eqref{ut} with $h=g^{2}$ and $H(s, \cdot, \cdot)=\eta_{\texttt f}[\psi_{t-s}[g]]$, we get  \eqref{M22}.
\end{proof}

\section{NBP Monte Carlo convergence}

\begin{proof}[Proof of Theorem \ref{cor:var}]
The estimator $\Psi^{\textrm{br}}_{k}$ as defined in \eqref{def: Psi_br} is unbiased. 
Essentially the proof is based around a study of its variance, which amounts to studying the variance of the branching  system, since
\begin{equation}
\mathbb E_{\delta(r, \upsilon)}\Big[\Big(\Psi^{\texttt{br}}_{k}[g](t, r, \upsilon)-\psi_{t}[g](r, \upsilon)\Big)^{2}\Big]=\frac{1}{k}\Big(\mathbb E_{\delta(r, \upsilon)}[\langle g, X_{t}\rangle^{2}]-\psi_{t}[g](r, \upsilon)^{2}\Big).
\label{just2ndmom}
\end{equation}
We shall rely upon the many-to-two formula in Lemma \ref{M2F} to compute the above.  Accordingly, we now claim as an intermediate step.
\begin{lem}\label{lem:NBPConv}
  We have for all $t\geq 0$, $k\in\mathbb{N}$,
\begin{align}\label{eq: variance}
&\mathbb E_{\delta(r, \upsilon)}\Big[\Big(\Psi^{\emph{\texttt{br}}}_{k}[g](t,r, \upsilon)-\psi_{t}[g](r, \upsilon)\Big)^{2}\Big]\\ \notag
&\qquad\qquad =\frac{1}{k}\Big\{\psi_{t}[g^{2}](r, \upsilon)+\int_{0}^{t} \psi_{s}\Big[ \eta_{\texttt{\emph f}}[\psi_{t-s}[g]]\Big](r, \upsilon)\d s - \psi_{t}[g](r, \upsilon)^{2}\Big\}. 
\end{align}
Subsequently, it has the following asymptotics as $t\to\infty$. 
\begin{enumerate}[(i)]
\item \label{cas-crit}
If $\lambda_{\ast}= 0$, then for fixed $k$, 
\[
\lim_{t\to\infty} \frac kt \cdot\mathbb E_{\delta(r, \upsilon)}\Big[\Big(\Psi^{\emph{\texttt{br}}}_{k}[g](t,r, \upsilon)-\psi_{t}[g](r, \upsilon)\Big)^{2}\Big] = \langle \eta_{\texttt{\emph  f}}[\varphi], \tilde\varphi\rangle \langle g, \tilde\varphi\rangle^{2} \varphi(r, \upsilon) := C_{[1]}(g,r,v)\,. 
\]
\item \label{cas-sur}
If $\lambda_{\ast}>0$, then for fixed $k$, 
\begin{align*}
&\lim_{t\to\infty} e^{-2\lambda_{\ast}t}k\cdot\mathbb E_{\delta(r, \upsilon)}\Big[\Big(\Psi^{\emph{\texttt{br}}}_{k}[g](t,r, \upsilon)-\psi_{t}[g](r, \upsilon)\Big)^{2}\Big] \\
&\qquad \qquad\qquad = \langle g, \tilde\varphi\rangle^{2}\Big( \int_{0}^{\infty} e^{-2\lambda_{\ast}s}\psi_{s}\big[\eta_{\texttt{\emph f}}[\varphi]\big](r, \upsilon) \d s-\varphi(r, \upsilon)^{2}\Big)  := C_{[2]}(g,r,v) \in [0, \infty).
\end{align*}
\item \label{cas-sous}
If $\lambda_{\ast} < 0$, then for fixed $k$,
\begin{align*}
&\lim_{t\to\infty} e^{-\lambda_{\ast}t}k\cdot\mathbb E_{\delta(r, \upsilon)}\Big[\Big(\Psi^{\emph{\texttt{br}}}_{k}[g](t,r, \upsilon)-\psi_{t}[g](r, \upsilon)\Big)^{2}\Big] \\
  & \qquad \qquad\quad= \varphi(r, \upsilon) \Big\{ \langle g^{2}, \tilde\varphi \rangle+\int_0^{\infty} e^{-\lambda_{\ast}s}\langle \tilde\varphi, \eta_{\texttt{\emph  f}}\big[\psi_{s}[g]\big]\rangle \d s\Big\}  := C_{[3]}(g,r,v) \in [0, \infty).
\end{align*}
\end{enumerate}
\end{lem}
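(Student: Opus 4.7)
The plan is to first derive the variance identity \eqref{eq: variance} from the many-to-two formula, and then extract the stated $t \to \infty$ asymptotics in each of the three regimes by applying the uniform spectral expansion of Theorem \ref{CVtheorem} term-by-term. For the identity, since the $k$ copies of the NBP in \eqref{def: Psi_br} are i.i.d.\ and $\Psi^{\texttt{br}}_k[g]$ is unbiased, identity \eqref{just2ndmom} reduces the problem to evaluating $\mathbb E_{\delta_{(r,\upsilon)}}[\langle g, X_t\rangle^2]$. Applying Lemma \ref{M2F} with $f=g$ then gives the bracketed right-hand side of \eqref{eq: variance} immediately.

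For the asymptotic analysis, two ingredients are essential. First, \eqref{spectralexpsgp} of Theorem \ref{CVtheorem} supplies both the pointwise convergence $e^{-\lambda_* t}\psi_t[h](r,\upsilon) \to \varphi(r,\upsilon)\langle \tilde\varphi, h\rangle$ and a uniform bound $\psi_t[h](r,\upsilon) \le C e^{\lambda_* t} \|h\|_\infty\varphi(r,\upsilon)$ with a remainder of order $e^{(\lambda_* -\varepsilon)t}\|h\|_\infty$. Second, assumption (H4) yields the pointwise bound $\eta_{\texttt f}[f](r,\upsilon) \le C'\|f\|_\infty^2$ for some constant $C'$, so in particular $\|\eta_{\texttt f}[\psi_u[g]]\|_\infty = O(e^{2\lambda_* u})$; this doubled rate is what drives the split into three cases.

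Case (i), $\lambda_*=0$: both $\psi_t[g^2]$ and $\psi_t[g]^2$ are $O(1)$, so after dividing by $t$ they vanish, and the surviving contribution is $\tfrac{1}{t}\int_0^t \psi_s[\eta_{\texttt f}[\psi_{t-s}[g]]](r,\upsilon)\,\d s$. Splitting the integration domain into $[\delta t, (1-\delta)t]$ and its complement, on the central piece both $s$ and $t-s$ tend to infinity and the integrand converges uniformly to $\langle\tilde\varphi,g\rangle^2\langle\tilde\varphi,\eta_{\texttt f}[\varphi]\rangle\varphi(r,\upsilon)$ via two applications of Theorem \ref{CVtheorem}; the boundary pieces contribute $O(\delta t)$, and sending $\delta \to 0$ after $t\to\infty$ gives $C_{[1]}$. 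Case (ii), $\lambda_*>0$: use the rewriting
\[
e^{-2\lambda_* t}\psi_s\bigl[\eta_{\texttt f}[\psi_{t-s}[g]]\bigr] = e^{-2\lambda_* s}\psi_s\bigl[e^{-2\lambda_*(t-s)}\eta_{\texttt f}[\psi_{t-s}[g]]\bigr].
\]
The inner renormalised expression is uniformly bounded by $C\|g\|_\infty^2$ (via (H4) together with the linear semigroup bound) and, for each fixed $s$, converges as $t\to\infty$ to $\langle\tilde\varphi,g\rangle^2\eta_{\texttt f}[\varphi]$; the outer factor $e^{-2\lambda_* s}\psi_s[\cdot]$ then admits an integrable envelope of order $e^{-\lambda_* s}$. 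Dominated convergence identifies the limit of the integral, and combining with $e^{-2\lambda_* t}\psi_t[g]^2 \to \langle\tilde\varphi,g\rangle^2\varphi(r,\upsilon)^2$ and $e^{-2\lambda_* t}\psi_t[g^2] \to 0$ yields $C_{[2]}$. Case (iii), $\lambda_*<0$, is parallel after substituting $u=t-s$ in the integral: $\psi_t[g]^2$ is now negligible compared to $\psi_t[g^2]$, and for fixed $u$ the integrand satisfies $e^{-\lambda_* t}\psi_{t-u}[\eta_{\texttt f}[\psi_u[g]]](r,\upsilon) \to e^{-\lambda_* u}\langle\tilde\varphi,\eta_{\texttt f}[\psi_u[g]]\rangle\varphi(r,\upsilon)$, with integrable envelope $e^{\lambda_* u}\|g\|_\infty^2\varphi(r,\upsilon)$; dominated convergence delivers $C_{[3]}$.

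The principal obstacle is the presence of the quadratic operator $\eta_{\texttt f}$ wrapped around the semigroup. Unlike the linear rate of $\psi_{t-s}[g]$, the quantity $\eta_{\texttt f}[\psi_{t-s}[g]]$ has twice that exponential rate, so any dominated-convergence argument must calibrate bounds across the inner and outer semigroup applications simultaneously, and assumption (H4) is essential for keeping the nonlinear term at the right order. Once this bookkeeping is in place, each of the three regimes is then characterised by which single term on the right of \eqref{eq: variance} ultimately dominates after the appropriate renormalisation.
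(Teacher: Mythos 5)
Your proposal is correct and follows essentially the same route as the paper: reduce to the second moment via the i.i.d.\ structure, apply the many-to-two formula with $f=g$, and then extract the asymptotics from the uniform spectral bounds of Theorem \ref{CVtheorem} together with the (H4)-based quadratic/bilinear control of $\eta_{\texttt{f}}$. The only differences are cosmetic — you package the limit arguments as dominated convergence with integrable envelopes where the paper splits the integral at $t-t_0(\delta)$ and tracks explicit $\delta$-errors, and you sketch the critical case (via the $[\delta t,(1-\delta)t]$ splitting) which the paper omits as "similar".
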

\begin{proof}
The display \eqref{eq: variance} is an easy consequence of \eqref{M22}. We turn to the asymptotics. Appealing to \eqref{spectralexpsgp}, we have for any $f\in L_{\infty}^{+}(D\times V)$ and $\delta>0$, there exists some constant $K\in (0, \infty)$ and some $t_0=t_0(\delta)$ such that 
\begin{equation}\label{bd}
\sup_{t\ge 0}\|e^{-\lambda_{\ast}t}\psi_t[f]\|_\infty \le K\|f\|_{\infty}\  \text{ and } \ \|e^{-\lambda_{\ast}t}\psi_t[f]-\langle f, \tilde\varphi\rangle \varphi\|_\infty \le \delta\|f\|_{\infty}, \quad  \text{for all } t\ge t_0. 
\end{equation}
On the other hand, it follows from \eqref{etadef} that $\eta_{\texttt{f}}$ is a symmetric bilinear form, hence
\begin{equation}\label{bd-eta}
\left|\eta_{\texttt{f}}[f](r, \upsilon)-\eta_{\texttt{f}}[h](r, \upsilon)\right|=|\eta_{\texttt{f}}[f-h, f+h](r, \upsilon)|\le C \|f-h\|_\infty (\|f\|_\infty+\|h\|_\infty),
\end{equation}
where in the last inequality we have used the fact that $\|\sigma_{\texttt{f}}\|_{\infty}<\infty$ and the offspring number is also uniformly bounded by some constant $B$, so that in above $C:=\|\sigma_{\texttt{f}}\|_{\infty}\, B<\infty$. In particular, taking $h=0$ yields 
\begin{equation}\label{bd-eta'}
\left\|\eta_{\texttt{f}}[f]\right\|_{\infty}\le C \|f\|_\infty^{2}.
\end{equation}
Also, we clearly have 
\begin{equation}\label{mono}
f, g\in L_{\infty}, f\le g \quad \Rightarrow \quad \psi_{t}[f]\le \psi_{t}[g], \quad t\ge 0. 
\end{equation}

Let us look at the supercritical case, i.e.~$\lambda_{\ast}>0$. The arguments consist in extracting the dominant growth rate from \eqref{eq: variance} as $t\to\infty$. To that end, we first split the integral in \eqref{eq: variance} into two parts:
\[
\int_{0}^{t} \psi_{s}\Big[ \eta_{\texttt{f}}[\psi_{t-s}[g]]\Big](r, \upsilon)\d s = \int_{t-t_{0}}^{t} \psi_{s}\Big[ \eta_{\texttt{f}}[\psi_{t-s}[g]]\Big](r, \upsilon)\d s + \int_{0}^{t-t_{0}} \psi_{s}\Big[ \eta_{\texttt{f}}[\psi_{t-s}[g]]\Big](r, \upsilon)\d s. 
\]
Note that the first term on the right-hand side is of order $o(e^{2\lambda_{\ast}t})$. Indeed, 
\begin{align*}
\int_{t-t_{0}}^{t} \psi_{s}\Big[\eta_{\texttt{f}}[\psi_{t-s}[g]]\Big](r, \upsilon)\d s 
&\le\int_{t-t_{0}}^{t} \psi_{s}\Big[ \eta_{\texttt{f}}\big[K\|g\|_{\infty}e^{\lambda_{\ast}(t-s)}\big]\Big](r, \upsilon)\d s \qquad \text{by \eqref{bd} }\\
&\le \int_{t-t_{0}}^{t} \psi_{s}\Big[CK^{2}\|g\|_{\infty}^{2}e^{2\lambda_{\ast}(t-s)}\Big](r, \upsilon) \d s \qquad \text{by \eqref{bd-eta'}} \\
&\le CK^{3}\|g\|_{\infty}^{2} e^{2\lambda_{\ast}t}\int_{t-t_{0}}^{t} e^{-\lambda_{\ast} s}\d s   \qquad \text{by \eqref{mono} and \eqref{bd}}\\
&=o(e^{2\lambda_{\ast}t}), \quad \text{ as } t\to\infty. 
\end{align*}
On the other hand, for the second term, we have
\begin{align*}
&\Big|\int_{0}^{t-t_{0}} \psi_{s}\Big[ \eta_{\texttt{f}}[\psi_{t-s}[g]]\Big](r, \upsilon)\d s-\langle g, \tilde\varphi\rangle^{2} \int_{0}^{t-t_{0}}e^{2\lambda_{\ast}(t-s)}\psi_{s}\Big[ \eta_{\texttt{f}}[\varphi]\Big](r, \upsilon)\d s\Big|\\
&\le \int_{0}^{t-t_{0}} \psi_{s}\Big[ \Big|\eta_{\texttt{f}}\big[\psi_{t-s}[g]\big]-\eta_{\texttt{f}}\big[\langle g, \tilde\varphi \rangle e^{\lambda_{\ast}(t-s)}\varphi\big]\Big|\Big](r, \upsilon)\d s\\
&\le \int_{0}^{t-t_{0}} \psi_{s}\Big[e^{2\lambda_{\ast}(t-s)}\delta CK \|g\|_{\infty}(\|g\|_{\infty}+|\langle g,\tilde{\varphi}\rangle|\|\varphi\|_{\infty})\big] \Big] \d s \quad \text{by \eqref{bd}, \eqref{bd-eta}}\\
&\le \delta CK\|g\|_{\infty}(\delta\|g\|_{\infty}+2|\langle g,\tilde{\varphi}\rangle|\|\varphi\|_{\infty}) e^{2\lambda_{\ast}t}\int_{0}^{t-t_{0}}e^{-\lambda_{\ast}s}\d s \quad \text{by \eqref{bd}}\\
&=O(\delta e^{2\lambda_{\ast}t}), \quad \text{ as } t\to\infty. 
\end{align*}
Since $\delta$ is arbitrary, when combined with the fact that the integral 
\[
\int _{0}^{\infty}e^{-2\lambda_{\ast}s}\psi_{s}[\eta_{\texttt{f}}[\varphi]](r, \upsilon) \d s\le CK\|\varphi\|^{2}_{\infty}\int_{0}^{\infty}e^{-\lambda_{\ast}s}\d s<\infty,
\]
the above implies 
\[
\int_{0}^{t-t_{0}} \psi_{s}\Big[ \eta_{\texttt{f}}[\psi_{t-s}[g]]\Big](r, \upsilon)\d s \overset{t\to\infty}{\sim} \langle g, \tilde\varphi\rangle^{2}e^{2\lambda_{\ast}t}\int _{0}^{\infty}e^{-2\lambda_{\ast}s}\psi_{s}[\eta_{\texttt{f}}[\varphi]](r, \upsilon) \d s. 
\]
The asymptotics in the supercritical case then easily follows. 
Next, we consider the subcritical case, i.e.~$\lambda_{\ast}<0$. We start with a change of variable:
\[
\int_{0}^{t}\psi_{s}\Big[ \eta_{\texttt{f}}\big[\psi_{t-s}[g]\big]\Big](r, \upsilon) \d s = \int_{0}^{t}\psi_{t-s}\Big[ \eta_{\texttt{f}}\big[\psi_{s}[g]\big]\Big](r, \upsilon) \d s. 
\]
Note that by \eqref{bd} and \eqref{bd-eta'}, 
\[
\int_{t-t_0}^{t}\psi_{t-s}\Big[ \eta_{\texttt{f}}\big[\psi_{s}[g]\big]\Big](r, \upsilon) \d s\le CK^{2}\|g\|_{\infty}^{2}\int_{t-t_{0}}^{t} e^{2\lambda_{\ast}s+\lambda_{\ast}(t-s)}\d s= o(e^{\lambda_{\ast}t}),  \quad \text{as } t\to\infty,
\]
since $\lambda_{\ast}<0$. 
On the other hand, applying \eqref{bd} to $\psi_{t-s}$ and $\eta_{\texttt{f}}\big[\psi_{s}[g]\big]$, noting the latter is bounded by $CK^{2} e^{2\lambda_{\ast}s}\|g\|_{\infty}^{2}$ as consequence of \eqref{bd-eta'} and \eqref{mono}, we get
\begin{align*}
\bigg|\int_{0}^{t-t_0} \psi_{t-s}\Big[ \eta_{\texttt{f}}\big[\psi_{s}[g]\big]\Big](r, \upsilon)\d s &- e^{\lambda_{\ast}t}\varphi(r, \upsilon)\int_0^{t-t_0} e^{-\lambda_{\ast}s}\langle \tilde\varphi, \eta_{\texttt{f}}\big[\psi_{s}[g]\big]\rangle \d s\bigg|\\
&\le \int_0^{t-t_0} \delta e^{\lambda_{\ast}(t-s)}
\big\|\eta_{\texttt{f}}\big[\psi_{s}[g]\big]\big\|_{\infty} \d s \\
&\le \int_0^{t-t_0} \delta e^{\lambda_{\ast}(t+s)}CK^{2} \|g\|_{\infty}^{2} \d s = O(\delta e^{\lambda_{\ast}t}), \quad\text{as } t\to\infty. 
\end{align*}
Arguing as in the previous case, we conclude that 
\[
\int_{0}^{t}\psi_{t-s}\Big[ \eta_{\texttt{f}}\big[\psi_{s}[g]\big]\Big](r, \upsilon) \d s \overset{t\to\infty}{\sim}
e^{\lambda_{\ast}t}\varphi(r, \upsilon)\int_0^{\infty} e^{-\lambda_{\ast}s}\langle \tilde\varphi, \eta_{\texttt{f}}\big[\psi_{s}[g]\big]\rangle \d s,
\]
which in turn implies the result in the subcritical case. The proof in the critical case follows from similar arguments and is therefore omitted. 
\end{proof}

\smallskip We begin by noting the following estimates, which follow from the concavity of the function $h_t(x):= x^{1/t}$. Fix $x_0 \in (0,\infty)$, and consider $x \ge 0$. Then
\begin{align*}
  0 & \le  h_t(x) - h_t(x_0)  \le  (x-x_0) \frac{x_0^{1/t-1}}{t}, \qquad x \ge x_0\\
  0 & \le  h_t(x_0) - h_t(x)  \le  (x_0-x) \,x_0^{1/t-1}, \qquad x \le x_0
\end{align*}
From which we deduce the inequality for all $x \ge 0$
\begin{equation*}
  (h_t(x)-h_t(x_0))^2 \le (x-x_0)^2 \left( x_0^{1/t-1} \max\left\{1,\frac{1}{t}\right\}\right)^2.
\end{equation*}
Now returning to the  estimation of the lead eigenvalue, by appealing to the above inequality and $(x+y)^2\le 2x^2+2y^2$ we get
\begin{align}
&\mathbb E\Big[\Big(\left(\Psi^{\texttt{br}}_k[g](t, r, \upsilon)\right)^{1/t}-\me^{\lambda_\ast}\Big)^2\Big]\notag\\
&\hspace{2cm}\le \frac{2}{\min\{1,t^2\}} \frac{\mathbb E\big[\big(\Psi^{\texttt{br}}_k[g](t, r, \upsilon)-\psi_t[g](r, \upsilon)\big)^2\big]}{\psi_t[g](r, \upsilon)^{2-2/t}}+ 2\Big(\left(\psi_t[g](r,v)\right)^{1/t}-\me^{\lambda_\ast}\Big)^2.
\label{powerestimate}
\end{align}

The proof  of Theorem \ref{cor:var} now follows from the above inequality, \eqref{spectralexpsgp} and the estimates in (i) - (iii) of Lemma \ref{lem:NBPConv}. 
\end{proof}
 
\section{NRW Monte Carlo convergence}

We need the following intermediate result, before turning to the proof of Theorem \ref{NRWvar}.
\begin{lem}\label{interlem} Under the conditions of Theorem \ref{NRWvar}, we have 
\begin{equation}
\max(2\lambda_{\ast}, \lambda_{\ast}+\underline\beta)\le \lambda_1\le  \lambda_{\ast}+\overline{\beta}. 
\label{primeclaim}
\end{equation}
\end{lem}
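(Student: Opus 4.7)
First I would make explicit the semigroup whose principal eigenvalue is $\lambda_1$. From the computation of the variance in the NRW setting, the second moment of a single sample equals
\[
\psi^{(1)}_t[g^2](r,\upsilon) := \mathbf{E}_{(r,\upsilon)}\!\left[{\rm e}^{\int_0^t 2\beta(R_s,\Upsilon_s)\,{\rm d}s} g(R_t,\Upsilon_t)^2 \mathbf{1}_{(t<\tau^D)}\right],
\]
i.e.\ the expectation semigroup of the NRW with doubled potential $2\beta$. Since $2\beta$ is still uniformly bounded above (by $2\overline{\beta}$), Remark \ref{CVtheoremupgrade} grants an application of Theorem \ref{CVtheorem} to $(\psi^{(1)}_t, t\ge 0)$, producing an eigentriple $(\lambda_1,\varphi_1,\tilde{\varphi}_1)$ with the corresponding spectral expansion; this is the $\lambda_1$ in the statement. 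The plan is then to compare $\psi^{(1)}_t$ against $\psi_t$ in three different ways, each of which pins down one of the inequalities.

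For the upper bound $\lambda_1\le \lambda_* + \overline{\beta}$, I would use the pointwise estimate $2\beta(r,\upsilon) = \beta(r,\upsilon) + \beta(r,\upsilon) \le \beta(r,\upsilon) + \overline{\beta}$, which gives
\[
\psi^{(1)}_t[g](r,\upsilon) \le {\rm e}^{\overline{\beta} t}\,\psi_t[g](r,\upsilon), \qquad g\in L^+_\infty(D\times V).
\]
Applying this with $g=\varphi_1$ and using on the left that $\psi^{(1)}_t[\varphi_1] = {\rm e}^{\lambda_1 t}\varphi_1$, and on the right that $\psi_t[\varphi_1](r,\upsilon) = O({\rm e}^{\lambda_* t})$ (from \eqref{spectralexpsgp} applied to $g=\varphi_1$), I obtain ${\rm e}^{\lambda_1 t}\varphi_1(r,\upsilon) \le C\,{\rm e}^{(\lambda_*+\overline{\beta})t}$ for all $t\ge 0$. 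Taking $\log$, dividing by $t$, and sending $t\to\infty$ yields the claim (at any $(r,\upsilon)$ where $\varphi_1>0$, which exists by Theorem \ref{CVtheorem}).

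For the lower bound $\lambda_1 \ge \lambda_* + \underline{\beta}$, I would use the symmetric pointwise estimate $2\beta \ge \beta + \underline{\beta}$ to obtain $\psi^{(1)}_t[g] \ge {\rm e}^{\underline{\beta}t}\psi_t[g]$ and apply this with $g=\varphi$. The left-hand side is $\psi^{(1)}_t[\varphi](r,\upsilon) \sim \langle \tilde{\varphi}_1,\varphi\rangle\varphi_1(r,\upsilon){\rm e}^{\lambda_1 t}$ (using the spectral expansion for $\psi^{(1)}$), while the right-hand side equals ${\rm e}^{(\underline{\beta}+\lambda_*)t}\varphi(r,\upsilon)$. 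Comparing exponential growth rates gives $\lambda_1 \ge \lambda_* + \underline{\beta}$, provided $\langle\tilde{\varphi}_1,\varphi\rangle>0$; this positivity follows because both $\tilde{\varphi}_1$ and $\varphi$ are strictly positive on compactly embedded subsets of $D\times V$, by Theorem \ref{CVtheorem}.

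For the remaining lower bound $\lambda_1\ge 2\lambda_*$, I would apply Cauchy--Schwarz under the NRW law: writing $\beta = \tfrac12(2\beta) + 0$,
\[
\psi_t[\varphi](r,\upsilon)^2
=
\mathbf{E}_{(r,\upsilon)}\!\left[{\rm e}^{\int_0^t \beta\,{\rm d}s}\varphi(R_t,\Upsilon_t)\mathbf{1}_{(t<\tau^D)}\right]^2
\le
\psi^{(1)}_t[\varphi^2](r,\upsilon)\cdot \mathbf{P}_{(r,\upsilon)}(t<\tau^D)
\le \psi^{(1)}_t[\varphi^2](r,\upsilon).
\]
The left-hand side equals ${\rm e}^{2\lambda_* t}\varphi(r,\upsilon)^2$ exactly (since $\varphi$ is the right eigenfunction of $\psi_t$), while the right-hand side is $O({\rm e}^{\lambda_1 t})$. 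Comparing exponential rates yields $2\lambda_*\le \lambda_1$. The main obstacle, and the reason one must be careful in each of the three steps, is to ensure that the eigenfunctions appearing as test functions are strictly positive at the chosen $(r,\upsilon)$ so that exponential rates can be isolated; this is exactly what the positivity clauses in Theorem \ref{CVtheorem} provide, once $(r,\upsilon)$ is taken in a compactly embedded subset of $D\times V$.
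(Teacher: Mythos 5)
Your proposal is correct and follows essentially the same route as the paper: the same three comparison inequalities ($e^{\int 2\beta}\le e^{\overline{\beta}t}e^{\int\beta}$, $e^{\int 2\beta}\ge e^{\underline{\beta}t}e^{\int\beta}$, and a Cauchy--Schwarz/Jensen step for the $2\lambda_*$ bound) applied to the doubled-potential semigroup, with Remark \ref{CVtheoremupgrade} supplying the Perron--Frobenius data for $\lambda_1$. The only difference is cosmetic: you test against specific eigenfunctions and compare growth rates, whereas the paper keeps $g$ general and takes $\tfrac{1}{t}\log$ of both sides.
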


\begin{proof}
We have, on the one hand, that 
\begin{align}
\psi^1_t[g^2](r,\upsilon)  &:=  \mathbf{E}_{(r,\upsilon)}\left[{\rm e}^{\int_0^t 2\beta(R_s, \Upsilon_s)\d s}g(R_t, \Upsilon_t)^2 \mathbf{1}_{(t < \tau^D)}\right]\notag\\
&\leq 
{\rm e}^{\bar\beta t}  \mathbf{E}_{(r,\upsilon)}\left[{\rm e}^{\int_0^t \beta(R_s, \Upsilon_s)\d s}g(R_t, \Upsilon_t)^2 \mathbf{1}_{(t < \tau^D)}\right]
= {\rm e}^{\bar\beta t} \psi_t[g^2](r,\upsilon)
\label{upper1}
\end{align}
On the other hand, by Jensen's inequaltiy, 
\begin{align}
\psi^1_t[g^2](r,\upsilon) 
&\geq 
\mathbf{E}_{(r,\upsilon)}\left[{\rm e}^{\int_0^t \beta(R_s, \Upsilon_s)\d s}g(R_t, \Upsilon_t) \mathbf{1}_{(t < \tau^D)}\right]^2= \psi_t[g](r,\upsilon)^2
\label{lower1}
\end{align}
and also that 
\begin{align}
\psi^1_t[g^2](r,\upsilon) 
&\geq {\rm e}^{\underline\beta t}  
\mathbf{E}_{(r,\upsilon)}\left[{\rm e}^{\int_0^t \beta(R_s, \Upsilon_s)\d s}g(R_t, \Upsilon_t)^2 \mathbf{1}_{(t < \tau^D)}\right]^2= {\rm e}^{\underline{\beta} t}  \psi_t[g^2](r,\upsilon)
\label{lower2}
\end{align}
By taking logarithms, dividing by $t$ and then taking $t\to\infty$ using  Theorem \ref{CVtheorem} in \eqref{upper1} and \eqref{lower1}, \eqref{lower2}  respectively,   yields the desired bounds. 
\end{proof}

\begin{proof}[Proof of Theorem \ref{NRWvar}]
In a similar spirit to \eqref{just2ndmom}, we note that 
\begin{align}
&\mathbf{E}_{(r,\upsilon)}\Big[\Big(\Psi^{\texttt{rw}}_{k}[g](t, r, \upsilon)-\psi_{t}[g](r, \upsilon)\Big)^{2}\Big]\notag\\
&\hspace{2cm}=\frac{1}{k}\Big( \mathbf{E}_{(r,\upsilon)}\left[{\rm e}^{\int_0^t 2\beta(R_s, \Upsilon_s)\d s}g(R_t, \Upsilon_t)^2 \mathbf{1}_{(t < \tau^D)}\right]-\psi_{t}[g](r, \upsilon)^{2}\Big).
\label{just2ndmomE}
\end{align}
Next we note that 
$\psi^1_t[g^2]$, which was previously defined in \eqref{upper1},
behaves similarly to $\psi_t[g]$, albeit that the potential $\beta$ is replaced by $2\beta$ and $g$ by $g^2$. Invoking Theorem \ref{CVtheorem}, taking note of Remark \ref{CVtheoremupgrade}, we thus conclude that there exists a $\lambda_1$ with accompanying eigenfunctions $\varphi_1$ and $\tilde\varphi_1$ in $L^+_\infty(D\times V)$ such that 
\begin{equation}
\sup_{g\in L^+_\infty(D\times V): \norm{g}_\infty\leq 1}  \left\|{\rm e}^{-\lambda_1 t}{(\varphi_1)}^{-1}{\psi^1_t[g^2]}-\langle\tilde\varphi_1, g^2\rangle\right\|_\infty = o({\rm e}^{-\varepsilon t}) \text{ as $t\rightarrow+\infty$.}
\label{prime}
\end{equation}
Taking advantage of \eqref{powerestimate}, the desired conclusion follows with the help of  \eqref{prime} and Lemma \ref{interlem} .
\end{proof}

\section{$\lowercase{h}$-NRW Monte Carlo convergence}
\begin{proof}[Proof of Theorem \ref{hNRWvar}]
 In that case, we note from the proof of Theorem \ref{NRWvar}, that the crux of the argument there boils down to the estimate \eqref{powerestimate} together with the analogue of \eqref{just2ndmomE}, which reads
\begin{align}
&\mathbf{E}_{(r,\upsilon)}^h\left[\Big(\Psi^{h\texttt{-rw}}_{k}[g](r, \upsilon)-\psi_{t}[g](r, \upsilon)\Big)^{2}\right]\notag\\
&\hspace{1cm}=\frac{1}{k}\Big(\mathbf{E}_{(r,\upsilon)}^h\left[\exp\left(2\int_0^t  \frac{{\bL}h (R_s,\Upsilon_s)}{h (R_s,\Upsilon_s)} +\beta(R_s,\Upsilon_s)\d s\right)\frac{g(R_t, \Upsilon_t)^2}{h(R_t, \Upsilon_t)^2}\mathbf{1}_{(t<\tau^D)}
\right]-\psi_{t}[g](r, \upsilon)^{2}\Big)\notag\\
&\hspace{1cm}\leq 
\frac{1}{k}\Big( \mathbf{E}_{(r,\upsilon)}^h\left[\exp\left(2\int_0^t  \frac{{\bL}h (R_s,\Upsilon_s)}{h (R_s,\Upsilon_s)} +\beta(R_s,\Upsilon_s)\d s\right)\mathbf{1}_{(t<\tau^D)}
\right]-\psi_{t}[g](r, \upsilon)^{2}\Big)\notag\\
&\hspace{1cm}=\frac{1}{k}\Big( \mathbf{E}_{(r,\upsilon)}\left[\exp\left(\int_0^t  \frac{{\bL}h (R_s,\Upsilon_s)}{h (R_s,\Upsilon_s)} +2\beta(R_s,\Upsilon_s)\d s\right)\frac{h(R_t, \Upsilon_t)}{h(r,\upsilon)}\mathbf{1}_{(t<\tau^D)}
\right]-\psi_{t}[g](r, \upsilon)^{2}\Big)
\label{just2ndmomF}
\end{align}
where we have used that $g\leq h$.

\smallskip

To complete the proof, appealing to \eqref{supLh/h}, we can  follow 
the reasoning in the proof of Theorem \ref{NRWvar} and Lemma \ref{interlem}. In particular,  if we write 
\[
\overline{\varsigma} : = \sup_{r\in D,\upsilon\in V}\frac{(\bL +\beta) h(r,\upsilon)}{h(r,\upsilon)}\leq 
 \sup_{r\in D,\upsilon\in V}\frac{\bL h(r,\upsilon)}{h(r,\upsilon)} + \overline{\beta}<\infty,
\]
and 
\[
\underline{\varsigma}: = \inf_{r\in D,\upsilon\in V}\frac{(\bL +\beta) h(r,\upsilon)}{h(r,\upsilon)},
\]
then there exists a $\lambda_2$  satisfying
 $ \lambda_* +\underline{\varsigma}\leq \lambda_2\leq \lambda_*+\overline{\varsigma}.$  such that the desired result holds. 
 \end{proof}

\begin{proof}[Proof of Lemma \ref{thm: costrwh}]
  We can take inspiration from the proof of Lemma~\ref{thm: costrw}, which is given immediately preceding the statement of the lemma. In particular considering \eqref{AcostRW}, when we are under the assumptions of Theorem \ref{hNRWvar}, appealing to \eqref{ahpih}, for $f\in L^+_\infty(D\times V)$ the cost function \eqref{RWcostfunction} satisfies
  \begin{align}
    \mathbf{E}_{(r,\upsilon)}^h\big[C_{t}[f]\big]
    & = \mathbf{E}_{(r,\upsilon)}^h\left[\int_0^{t}  \mathbf{1}_{(s<\tau^D)}\alpha_h\pi_h[f](R_s, \Upsilon_s) \d s\right]\label{eq:h-integral}\\
    &=\mathbf{E}_{(r,\upsilon)}^h\left[\int_0^{t} \mathbf{1}_{(s<\tau^D)}\alpha(R_s, \Upsilon_s) \int_V f(R_s, \upsilon') \frac{h(R_s, \upsilon')}{h(R_s, \Upsilon_s)}\pi(R_s, \Upsilon_s,\upsilon')\d \upsilon'\d s\right]\notag\\
    &=\frac{1}{h(r,v)}\mathbf{E}_{(r,\upsilon)}\Big[\int_0^{t} \mathbf{1}_{(s< \tau^D)}  {\rm e}^{-\int_0^s \frac{\bL h}{h}(R_u,\Upsilon_u)\d u } \alpha(R_s, \Upsilon_s) \notag \\
    & \qquad \qquad \qquad \qquad \qquad \int_V  f(R_s,\Upsilon_s) h(R_s, \upsilon') \pi(R_s, \Upsilon_s,\upsilon')\d \upsilon'\d s\Big]\notag\\
    &\leq  \frac{||\alpha h f||_\infty}{h(r,\upsilon)}\int_0^t \mathbf{E}_{(r,\upsilon)}\left[ {\rm e}^{\int_0^s \beta(R_u,\Upsilon_u)\d u }\mathbf{1}_{(s< \tau^D)} {\rm e}^{-\int_0^s \frac{(\bL +\beta)h}{h}(R_u,\Upsilon_u)\d u } \right]\d s
      \label{intexp}
  \end{align}
  where we have used \eqref{hCOM2}. Recalling the definition in \eqref{betaprimes}, as well as the conclusion of  Theorem \ref{CVtheorem}, it is straightforward to estimate from \eqref{intexp} that 
  \[
    \limsup_{t\to\infty}{\rm e}^{-(\lambda_*-\underline\varsigma) t}\mathbf{E}_{(r,\upsilon)}^h\big[C_{t}[f]\big] <\infty.
  \]
   In the event that $\lambda_* = \underline\varsigma$, we need to be a little more careful and note that the integrand is asymptotically a constant, and, in that case, the expected cost grows no faster than linearly.

  \smallskip
  
  In the second case, where the conditions on the domain are satisfied, then the argument follows by careful consideration of \eqref{eq:h-integral}. We first define $U^f_n := \{ (r,v) | \alpha_h \pi_h[f] \in [n,n+1)\}$. Note that as a consequence of the lower bound on $f$, scatter events on this set happen with rate at least $n c_0^{-1}$. Moreover, given there is a scatter event in $U^f_n$, the scatter event will result in scatter \emph{to} $\Omega_0$ with probability at least $p_0$. For a fixed time $s$, we write $\tau_s:= \sup\{u \le s| (R_u,\Upsilon_u) \neq (R_s-(s-u) \Upsilon_s, \Upsilon_s)$, the time of the most recent scatter event. Then let
  \begin{equation*}
    A^n_t := \{(R_s,\Upsilon_s) \in \Omega_0 \text{ for all } s \in [\tau_t,t], \text{ and } (R_{\tau_t-},\Upsilon_{\tau_t-}) \in U^f_n\}
  \end{equation*}
  that is, the event that at time $t$, the process is in $\Omega_0$, and has been in $\Omega_0$ since the most recent scatter event, which happened in the set $U^f_n$. Note that the sets $A^n_t$ are disjoint for fixed $t$.

  \smallskip
  
  As a consequence of the assumption on $\alpha_h$, we can find $\epsilon = \epsilon(\delta) >0$ such that, conditional on starting at any $(r,v) \in \Omega_0$, then the expected time to the next jump is at least $\epsilon$. Then we have the (approximate!) inequality:
  \begin{equation*}
    \mathbb{E}^h_{(r,v)}\left[\int_0^{t \wedge \tau^D} \mathbf{1}_{A_s^n} \, \d s\right] \ge \mathbb{E}^h_{(r,v)}\left[\int_0^{t \wedge \tau_D} \mathbf{1}_{ (R_s,\Upsilon_s) \in U^f_n} \, \d s\right] (1+\epsilon c_0^{-1} p_0 n) 
  \end{equation*}
  where the identity follows from the fact that the process jumps out of the set $U^f_n$ into $\Omega_0$ at rate at least $c_0^{-1}p_0 n$, and once it is there, spends on average at least $\epsilon$ time units.

  Since we must have
  \begin{equation*}
    \sum_{n \ge 0} \mathbb{E}^h_{(r,v)}\left[\int_0^{t \wedge \tau^D} \mathbf{1}_{A_s^n} \, \d s\right]  \le t
  \end{equation*}
  we conclude that
  \begin{equation*}
    \sum_{n \ge 0} n \cdot \mathbb{E}^h_{(r,v)}\left[\int_0^{t \wedge \tau^D} \mathbf{1}_{ (R_s,\Upsilon_s) \in U^f_n} \, \d s\right] \le C t
  \end{equation*}
  for some constant $C$. Substituting this estimate into \eqref{eq:h-integral} gives the desired conclusion.
\end{proof}

\section{ The one-dimensional slab reactor}\label{B}

\begin{proof}[Proof of Proposition \ref{intervalprop}] 
Denote by $\Lambda$ the set  of eigenvalues for which there exists 
$\tilde\varphi\in L^+_\infty(D\times V)$ and $\varphi\in L^+_\infty(D\times V)$  such that \eqref{eq:eigen}  holds. According to Theorem \ref{CVtheorem},  $
\lambda_\ast\in \Lambda$  and $\lambda_\ast= \sup\Lambda$.

\smallskip

Suppose that $\lambda\in \Lambda$ and let $\chi$ be the associated eigenfunction:   $\bA \chi= \lambda \chi$. Let us write $f_+(r)=\chi(r, \upsilon_0)$ and $f_-(r)=\chi(r, -\upsilon_0)$. Using \eqref{1D_backwards_operators}, we can build candidates for $\chi$  in $L^+_2(D\times V)$ from the pointwise  relations
\begin{equation}\label{eq:ode}
\frac{d}{dx}
\begin{pmatrix}
f_+ \\
 f_- 
\end{pmatrix}
\quad =\quad M_\lambda 
\begin{pmatrix}
f_+ \\
f_- 
\end{pmatrix}, \quad \text{where }  
M_\lambda 
\ = \ \frac{1}{\upsilon_0}
\begin{pmatrix}
\lambda-\sigma_{\texttt{f}}+\sigma_{\texttt{s}} & -\sigma_{\texttt{s}} \\ 
\sigma_{\texttt{s}} & -(\lambda-\sigma_{\texttt{f}}+\sigma_{\texttt{s}})
\end{pmatrix},
\end{equation}
together with the boundary conditions
\begin{equation}\label{eq:bc}
f_+(L)=f_-(-L)=0
\end{equation}

To solve \eqref{eq:ode} and \eqref{eq:bc}, let us first suppose that 
\begin{equation}\label{case1}
(\lambda-\sigma_{\texttt{f}}+\sigma_{\texttt{s}})^2\ne \sigma_{\texttt{s}}^2 \quad \Longleftrightarrow\quad \lambda \ne \sigma_{\texttt{f}} \  \text{ or } \ \lambda \ne \sigma_{\texttt{f}}-2\sigma_{\texttt{s}},
\end{equation}
In this case, $M_\lambda$ has two distinct eigenvalues $\pm\alpha_\lambda$, where
\begin{equation}\label{eq: def_d_lambda}
\alpha_\lambda= \sqrt{-\mathrm{Det}(M_\lambda)}=\frac{1}{\upsilon_0}\sqrt{(\lambda-\sigma_{\texttt{f}} +\sigma_{\texttt{s}})^2-\sigma_{\texttt{s}}^2}\;\in \mathbb R_+\cup \texttt{i}\mathbb R_+. 
\end{equation}
Then by elementary results for ODEs, for fixed $\lambda$, the solutions to \eqref{eq:ode}, \eqref{eq:bc} take the following form
\begin{equation}\label{eq: sol1}
f_+(r)=C({\rm e}^{-\alpha_\lambda r}-{\rm e}^{\alpha_\lambda r-2\alpha_\lambda L}), \qquad f_-(r)=C'({\rm e}^{\alpha_\lambda r}-{\rm e}^{-\alpha_\lambda r-2\alpha_\lambda L}), 
\end{equation}
where $C, C'\in \R\setminus\{0\}$. 
Differentiating \eqref{eq: sol1}, we find
\[
\frac{d}{dx}
\begin{pmatrix}
f_+ \\
 f_- 
\end{pmatrix}
\quad =\quad 
\begin{pmatrix}
-\alpha_\lambda  \coth(2\alpha_\lambda L) & -\frac{C}{C'}\frac{\alpha_\lambda }{\sinh(2\alpha_\lambda L)} \\
\frac{C'}{C}\frac{\alpha_\lambda}{\sinh(2\alpha_\lambda L)} & \alpha_\lambda \coth(2\alpha_\lambda L)
\end{pmatrix}
\begin{pmatrix}
f_+ \\
f_- 
\end{pmatrix}
\]
Comparing this with \eqref{eq:ode}, we get
\begin{equation}\label{eq: cdis}
\frac{C}{C'}=\frac{C'}{C}\,, \quad \frac{C'}{C}\frac{\alpha_\lambda}{\sinh(2 \alpha_\lambda L)}=\frac{\sigma_{\texttt{s}}}{\upsilon_0} \quad \text{and} \quad \alpha_\lambda \coth(2\alpha_\lambda L ) = -\frac{\lambda-\sigma_{\texttt{f}}+\sigma_{\texttt{s}}}{\upsilon_0}.
\end{equation}
This first identity yields $C/C'=1$ or $-1$. This then implies 
\begin{equation}\label{eq: feq}
\bigg|\frac{2\alpha_\lambda}{\sinh(2 \alpha_\lambda L)}\bigg|=\frac{2\sigma_{\texttt{s}}}{\upsilon_0}.
\end{equation}
Recall that $\alpha_\lambda\in \mathbb R_+\cup \texttt{i}\mathbb R_+$. 
If $\alpha_\lambda \in \mathbb R_+$, then \eqref{eq: feq} has a solution if and only if $\theta={\upsilon_0}/(2\sigma_{\texttt{s}})\ge 1$. Moreover, if $\theta>1$, \eqref{eq: feq} has a unique solution in $[0, \infty)$. Denote by $x_\ast$ this unique solution. It follows from \eqref{eq: feq} that $\alpha_\lambda=\frac{x_\ast}{2 L}$. Since $x/\sinh(x)\ge 0$ for $x\in \mathbb R$, \eqref{eq: cdis} implies that 
\[
C=C' \quad \text{and} \quad \lambda-\sigma_{\mathtt{f}}+\sigma_{\mathtt{s}}<0.
\]
We then deduce from \eqref{eq: def_d_lambda} that in the case where $\theta>1$, we have
\[
 \alpha_\lambda=\frac{x_\ast}{2 L} \quad\text{and} \quad \lambda=\sigma_{\mathtt{f}}-\sigma_{\mathtt{s}}-\sqrt{\sigma_{\mathtt{s}}^2+\Big(\frac{\upsilon_0 x_\ast}{2L}\Big)^2}. 
\]
Plugging this into \eqref{eq: sol1} and renormalising $f_+, f_-$ by taking $C^{-1}=f_+(0)$, we obtain the formula \eqref{eq:1D_eigen1} and \eqref{eq:phi1} in the case $\theta>1$. 
Next, suppose that $\alpha_\lambda \in i\mathbb R_+$. In that case,  \eqref{eq: feq} has a solution if and only if $\theta={\upsilon_0}/(2\sigma_{\texttt{s}})\le 1$. Moreover, if $\theta<1$, \eqref{eq: feq} has a finite number of solutions in $\texttt{i}\mathbb R_+$, which we denote as $\texttt{i}x_k$, $1\le k\le n$, with $0<x_1<x_2<\cdots<x_n$. By a similar argument as before, for each $x_k$, $1\le k\le n$, we get
\[
 \alpha_\lambda=\frac{x_k \texttt{i}}{2L}, \quad \lambda=\sigma_{\mathtt{f}}-\sigma_{\mathtt{s}}-\sgn(\cot(x_k))\sqrt{\sigma_{\mathtt{s}}^2-\Big(\frac{\upsilon_0 x_k}{2L}\Big)^2} \quad \text{ and }\quad \frac{C}{C'}=\sgn(\cot(x_k)). 
\]
Moreover, we have $x_{1}\in (0, \pi)$. Therefore, in the case $\theta<1$, $\lambda_\ast=\max\Lambda$ is given by \eqref{eq:1D_eigen3} and we deduce the forms of the corresponding $\varphi, \tilde\varphi$ from \eqref{eq: sol1}. 
\smallskip 

Finally, we consider the case where $(\lambda-\sigma_{\texttt{f}}+\sigma_{\texttt{s}})^2=\sigma_{\texttt{s}}^2$. An elementary computation shows that  \eqref{eq:ode} and \eqref{eq:bc} have a solution in this case if and only if $\theta=1$. In that case, the solution is given by 
\[
\lambda=\sigma_{\texttt{f}}-2\sigma_{\texttt{s}}, \qquad  f_+(r)=1-\frac{r}{L} \quad \text{and} \quad  f_-(r)=1+\frac{r}{L}. 
\]
Together with our earlier remarks regarding the relationship between $\varphi$ and $\tilde\varphi$,  this concludes the proof. 
\end{proof}

\section*{Acknowledgements} This work was born out of a surprising connection that was made at the problem formulation ``Integrative Think Tank'' as part of the EPSRC Centre for Doctoral Training SAMBa in the summer of 2015. We are indebted to Professor Paul Smith from the ANSWERS modelling group at Jacobs for the extensive discussions, pictures that have appeared in this paper as well as hosting at their offices in Dorchester. We would also like to thank Emma Horton for some of the pictures and simulations that feature in this paper.

\bibliography{references}{}

\begin{thebibliography}{10}

\bibitem{angeli_limit_2019}
L.~Angeli, S.~Grosskinsky, and A.~M. Johansen.
\newblock Limit theorems for cloning algorithms.
\newblock {\em arXiv:1902.00509 [cond-mat, stat]}, May 2019.
\newblock arXiv: 1902.00509.

\bibitem{Bell}
G.~I. Bell.
\newblock On the stochastic theory of neutron transport.
\newblock {\em Nuc. Sci. \& Eng.}, 21:390--401, 1965.

\bibitem{BG}
G.~I. Bell and S.~Glasstone.
\newblock {\em Nuclear Reactor Theory}.
\newblock Reinhold, 1970.

\bibitem{benaim_stochastic_2018}
Michel Benaim, Bertrand Cloez, and Fabien Panloup.
\newblock Stochastic approximation of quasi-stationary distributions on compact
  spaces and applications.
\newblock {\em The Annals of Applied Probability}, 28(4):2370--2416, August
  2018.
\newblock Publisher: Institute of Mathematical Statistics.

\bibitem{Bertbook2}
J.~Bertoin.
\newblock {\em Random fragmentation and coagulation processes}, volume 102 of
  {\em Cambridge Studies in Advanced Mathematics}.
\newblock Cambridge University Press, Cambridge, 2006.

\bibitem{brown_monte_2016}
Forrest~B. Brown.
\newblock Monte {Carlo} {Techniques} for {Nuclear} {Systems} - {Theory}
  {Lectures}.
\newblock Technical Report LA-UR-16-29043, Los Alamos National Lab. (LANL), Los
  Alamos, NM (United States), November 2016.

\bibitem{brun_tripoli-4_2015}
E.~Brun, F.~Damian, C.~M. Diop, E.~Dumonteil, F.~X. Hugot, C.~Jouanne, Y.~K.
  Lee, F.~Malvagi, A.~Mazzolo, O.~Petit, J.~C. Trama, T.~Visonneau, and
  A.~Zoia.
\newblock {TRIPOLI}-4®, {CEA}, {EDF} and {AREVA} reference {Monte} {Carlo}
  code.
\newblock {\em Annals of Nuclear Energy}, 82:151--160, August 2015.

\bibitem{burdzy_flemingviot_2000}
Krzysztof Burdzy, Robert Holyst, and Peter March.
\newblock A {Fleming}–{Viot} {Particle} {Representation} of the {Dirichlet}
  {Laplacian}.
\newblock {\em Communications in Mathematical Physics}, 214(3):679--703,
  November 2000.

\bibitem{champagnat_convergence_2018}
Nicolas Champagnat and Denis Villemonais.
\newblock Convergence of the {Fleming}-{Viot} process toward the minimal
  quasi-stationary distribution.
\newblock Technical report, October 2018.
\newblock arXiv: 1810.06849.

\bibitem{CS}
Z-Q. Chen and R.~Song.
\newblock Conditional gauge theorem for non-local {F}eynman-{K}ac transforms.
\newblock {\em Probab. Theory Related Fields}, 125(1):45--72, 2003.

\bibitem{christoforou_zero-variance-based_2011}
Stavros Christoforou and J.~Eduard Hoogenboom.
\newblock A {Zero}-{Variance}-{Based} {Scheme} for {Monte} {Carlo}
  {Criticality} {Calculations}.
\newblock {\em Nuclear Science and Engineering}, 167(1):91--104, January 2011.

\bibitem{CW}
K.~L. Chung and J.~B. Walsh.
\newblock {\em Markov processes, {B}rownian motion, and time symmetry}, volume
  249 of {\em Grundlehren der Mathematischen Wissenschaften [Fundamental
  Principles of Mathematical Sciences]}.
\newblock Springer, New York, second edition, 2005.

\bibitem{MultiNTE}
A.~M.~G. Cox, S.~C. Harris, E.~L. Horton, and Andreas~E. Kyprianou.
\newblock Multi-species neutron transport equation.
\newblock {\em J. Stat. Phys.}, 176(2):425--455, 2019.

\bibitem{D}
R.~Dautray, M.~Cessenat, G.~Ledanois, P.-L. Lions, E.~Pardoux, and R.~Sentis.
\newblock {\em M\'ethodes probabilistes pour les \'equations de la physique}.
\newblock Collection du Commissariat a l'\'energie atomique. Eyrolles, Paris,
  1989.

\bibitem{DL6}
R.~Dautray and J.-L. Lions.
\newblock {\em Mathematical analysis and numerical methods for science and
  technology. {V}ol. 6}.
\newblock Springer-Verlag, Berlin, 1993.
\newblock Evolution problems. II, With the collaboration of Claude Bardos,
  Michel Cessenat, Alain Kavenoky, Patrick Lascaux, Bertrand Mercier, Olivier
  Pironneau, Bruno Scheurer and R\'emi Sentis, Translated from the French by
  Alan Craig.

\bibitem{davis_piecewise-deterministic_1984}
M.~H.~A. Davis.
\newblock Piecewise-{Deterministic} {Markov} {Processes}: {A} {General} {Class}
  of {Non}-{Diffusion} {Stochastic} {Models}.
\newblock {\em Journal of the Royal Statistical Society: Series B
  (Methodological)}, 46(3):353--376, 1984.

\bibitem{DS}
B.~Davison and J.~B. Sykes.
\newblock {\em Neutron transport theory}.
\newblock Oxford, at the Clarendon Press, 1957.

\bibitem{del_moral_feynman-kac_2004}
P.~Del~Moral.
\newblock {\em Feynman-{Kac} {Formulae}: {Genealogical} and {Interacting}
  {Particle} {Systems} with {Applications}}.
\newblock Probability and {Its} {Applications}. Springer-Verlag, New York,
  2004.

\bibitem{douc_comparison_2005}
R.~Douc, O.~Capp\'e, and Eric Moulines.
\newblock Comparison of resampling schemes for particle filtering.
\newblock In {\em {ISPA} 2005. {Proceedings} of the 4th {International}
  {Symposium} on {Image} and {Signal} {Processing} and {Analysis}, 2005.},
  pages 64--69, September 2005.

\bibitem{DoucetSMC}
A.~Doucet, N.~de Freitas, and N.~Gordon, editors.
\newblock {\em Sequential {Monte} {Carlo} {Methods} in {Practice}}.
\newblock Information {Science} and {Statistics}. Springer-Verlag, New York,
  2001.

\bibitem{Dynkin2}
E.~B. Dynkin.
\newblock {\em Diffusions, superdiffusions and partial differential equations},
  volume~50 of {\em American Mathematical Society Colloquium Publications}.
\newblock American Mathematical Society, Providence, RI, 2002.

\bibitem{E}
A.~M. Etheridge.
\newblock {\em An introduction to superprocesses}, volume~20 of {\em University
  Lecture Series}.
\newblock American Mathematical Society, Providence, RI, 2000.

\bibitem{faucher_variance-reduction_2019}
M.~Faucher, D.~Mancusi, and A.~Zoia.
\newblock Variance-reduction methods for {Monte} {Carlo} kinetic simulations.
\newblock In {\em {MandC} 2019}, Portland, United States, August 2019.

\bibitem{groisman_simulation_2012}
Pablo Groisman and Matthieu Jonckheere.
\newblock Simulation of quasi-stationary distributions on countable spaces.
\newblock Technical report, June 2012.
\newblock arXiv: 1206.6712.

\bibitem{SNTE-II}
S.~C. Harris, E.~Horton, and A.~E. Kyprianou.
\newblock Stochastic methods for the neutron transport equation {II}: almost
  sure growth.
\newblock {\em Ann. Appl. Probab.}, 30(6):2815--2845, 2020.

\bibitem{HR}
S.~C. Harris and M.~I. Roberts.
\newblock The many-to-few lemma and multiple spines.
\newblock {\em Ann. Inst. Henri Poincar\'{e} Probab. Stat.}, 53(1):226--242,
  2017.

\bibitem{HengControlledSMC}
J.~Heng, A.~N. Bishop, G.~Deligiannidis, and A.~Doucet.
\newblock Controlled sequential {Monte} {Carlo}.
\newblock {\em Annals of Statistics}, 48(5):2904--2929, October 2020.
\newblock Publisher: Institute of Mathematical Statistics.

\bibitem{SNTE}
E.~Horton, A.~E. Kyprianou, and D.~Villemonais.
\newblock Stochastic methods for the neutron transport equation {I}: linear
  semigroup asymptotics.
\newblock {\em Ann. Appl. Probab.}, 30(6):2573--2612, 2020.

\bibitem{GPS}
M.J.~Parkinson I.G.~Graham and R.~Scheichl.
\newblock Modern monte carlo variants for uncertainty quantification in neutron
  transport.
\newblock {\em Festschrift for the 80th Birthday of Ian Sloan, eds. J. Dick,
  F.Y. Kuo, and H. Wozniakowski}, 2018.

\bibitem{jack_ergodicity_2020}
R.~L. Jack.
\newblock Ergodicity and large deviations in physical systems with stochastic
  dynamics.
\newblock {\em The European Physical Journal B: Condensed Matter and Complex
  Systems}, 93(4):1--22, 2020.
\newblock Publisher: Springer \& EDP Sciences.

\bibitem{KP}
A.~E. Kyprianou and S.~Palau.
\newblock Extinction properties of multi-type continuous-state branching
  processes.
\newblock {\em Stochastic Process. Appl.}, 128(10):3466--3489, 2018.

\bibitem{LPS}
B.~Lapeyre, \'E. Pardoux, and R.~Sentis.
\newblock {\em Introduction to {M}onte-{C}arlo methods for transport and
  diffusion equations}, volume~6 of {\em Oxford Texts in Applied and
  Engineering Mathematics}.
\newblock Oxford University Press, Oxford, 2003.
\newblock Translated from the 1998 French original by Alan Craig and Fionn
  Craig.

\bibitem{LeGall}
J.-F. Le~Gall.
\newblock Random trees and applications.
\newblock {\em Probab. Surv.}, 2:245--311, 2005.

\bibitem{leppanen_serpent_2015}
Jaakko Lepp\"anen, Maria Pusa, Tuomas Viitanen, Ville Valtavirta, and Toni
  Kaltiaisenaho.
\newblock The {Serpent} {Monte} {Carlo} code: {Status}, development and
  applications in 2013.
\newblock {\em Annals of Nuclear Energy}, 82:142--150, August 2015.

\bibitem{LM}
E.~E. Lewis and Jr. W.~F.~Miller.
\newblock {\em Computational Methods of Neutron Transport}.
\newblock Wiley \& Sons, New York, 1984.

\bibitem{Li}
Z.~Li.
\newblock {\em Measure-valued branching {M}arkov processes}.
\newblock Probability and its Applications (New York). Springer, Heidelberg,
  2011.

\bibitem{lux_monte_1991}
Iv\'an Lux and L\'aszl\'o Koblinger.
\newblock {\em Monte {Carlo} {Particle} {Transport} {Methods}: {Neutron} and
  {Photon} {Calculations}}.
\newblock CRC Press, 1991.

\bibitem{mailler_stochastic_2020}
C\'ecile Mailler and Denis Villemonais.
\newblock Stochastic approximation on noncompact measure spaces and application
  to measure-valued {Pólya} processes.
\newblock {\em The Annals of Applied Probability}, 30(5):2393--2438, October
  2020.
\newblock Publisher: Institute of Mathematical Statistics.

\bibitem{MT}
S.~Maire and D.~Talay.
\newblock On a {M}onte {C}arlo method for neutron transport criticality
  computations.
\newblock {\em IMA J. Numer. Anal.}, 26(4):657--685, 2006.

\bibitem{mancusi_towards_2021}
Davide Mancusi and Andrea Zoia.
\newblock Towards zero-variance schemes for kintetic monte-carlo simulations.
\newblock {\em EPJ Web of Conferences}, 247:04010, 2021.

\bibitem{M-K}
M.~Mokhtar-Kharroubi.
\newblock {\em Mathematical topics in neutron transport theory}, volume~46 of
  {\em Series on Advances in Mathematics for Applied Sciences}.
\newblock World Scientific Publishing Co., Inc., River Edge, NJ, 1997.
\newblock New aspects, With a chapter by M. Choulli and P. Stefanov.

\bibitem{PP}
I.~P\'azsit and L.~P\'al.
\newblock {\em Neutron Fluctuations: A Treatise on the Physics of Branching
  Processes}.
\newblock Elsevier, 2008.

\bibitem{PR}
A.~Pazy and P.~H. Rabinowitz.
\newblock On a branching process in neutron transport theory.
\newblock {\em Arch. Rational Mech. Anal.}, 51:153--164, 1973.

\bibitem{pollock_quasi-stationary_2020}
Murray Pollock, Paul Fearnhead, Adam~M. Johansen, and Gareth~O. Roberts.
\newblock Quasi-stationary {Monte} {Carlo} and the {ScaLE} {Algorithm}.
\newblock {\em arXiv:1609.03436 [stat]}, April 2020.
\newblock arXiv: 1609.03436.

\bibitem{richards_recent_2019}
Simon Richards, Geoff Dobson, Tim Fry, David Hanlon, David Long, Ray Perry,
  Paul Smith, Francesco Tantillo, and Tim Ware.
\newblock Recent {Developments} to the {MONK} {Monte} {Carlo} {Code} for
  {Criticality} {Safety} and {Reactor} {Physics} {Analyses}.
\newblock In {\em ICNC 2019 - 11th International conference on Nuclear
  Criticality Safety}, page~10, 2019.

\bibitem{vanetti_piecewise-deterministic_2018}
Paul Vanetti, Alexandre Bouchard-C\^ot\'e, George Deligiannidis, and Arnaud
  Doucet.
\newblock Piecewise-{Deterministic} {Markov} {Chain} {Monte} {Carlo}.
\newblock Technical report, May 2018.
\newblock arXiv: 1707.05296.

\bibitem{wang_approximation_2020}
Andi~Q. Wang, Gareth~O. Roberts, and David Steinsaltz.
\newblock An approximation scheme for quasi-stationary distributions of killed
  diffusions.
\newblock {\em Stochastic Processes and their Applications}, 130(5):3193--3219,
  May 2020.

\bibitem{werner_mcnp_2018}
Christopher~John Werner, Jeffrey~S. Bull, C.~J. Solomon, Forrest~B. Brown,
  Gregg~Walter McKinney, Michael~Evan Rising, David~A. Dixon, Roger~Lee Martz,
  Henry~G. Hughes, Lawrence~James Cox, Anthony~J. Zukaitis, J.~C. Armstrong,
  Robert~Arthur Forster, and Laura Casswell.
\newblock {MCNP} {Version} 6.2 {Release} {Notes}.
\newblock Technical Report LA-UR-18-20808, Los Alamos National Lab. (LANL), Los
  Alamos, NM (United States), February 2018.

\bibitem{WhiteleyTwisted}
N.~Whiteley and A.~Lee.
\newblock Twisted particle filters.
\newblock {\em The Annals of Statistics}, 42(1):115--141, February 2014.

\bibitem{zoia_alpha_2014}
Andrea Zoia, Emeric Brun, and Fausto Malvagi.
\newblock Alpha eigenvalue calculations with {Tripoli}-4.
\newblock {\em Annals of Nuclear Energy}, 63:276--284, January 2014.

\end{thebibliography}

\end{document}